\newtheorem{lem}{Lemma}[section]
\newtheorem{thm}[lem]{Theorem}
\newtheorem{pro}[lem]{Proposition}  
\newtheorem{exa}[lem]{Example}
\numberwithin{equation}{section}
\newcommand{\ww}{{\mathbf{w}}}
\newcommand{\xx}{{\mathbf{x}}}
\newcommand{\inv}{{\mathsf{inv}}}
\newcommand{\cyc}{{\mathsf{cyc}}}
\newcommand{\wt}{{\mathsf{wt}}}
\newcommand{\exc}{{\mathsf{exc}}}
\newcommand{\lucky}{{\mathsf{lucky}}}
\newcommand{\reluc}{{\mathsf{reluc}}}
\newcommand{\bad}{{\mathsf{bad}}}
\newcommand{\npc}{{\mathsf{npc}}}
\newcommand{\PF}{{\mathrm{PF}}}
\newcommand{\NC}{{\mathrm{NC}}}
\newcommand{\PP}{{\mathcal{P}}}
\newcommand{\X}{{\mathcal{X}}}
\newcommand{\F}{{\mathcal{F}}}
\title[$q$-Counting of Noncrossing Chains]{On $q$-Counting of Noncrossing Chains and \\Parking Functions}
\author[Y.-J. Cheng]{Yen-Jen Cheng}
\address{Department of Applied Mathematics,  National Yang Ming Chiao Tung University, Hsinchu 300093, Taiwan, ROC}
\email{yjc7755@nycu.edu.tw}
\author[S.-P. Eu]{Sen-Peng Eu}
\address{Department of Mathematics, National Taiwan Normal University, Taipei 116325, and Chinese Air Force Academy, Kaohsiung 820009, Taiwan, ROC}
\email{speu@math.ntnu.edu.tw}
\author[T.-S. Fu]{Tung-Shan Fu}
\address{Department of Applied Mathematics, National Pingtung University, Pingtung 900391, Taiwan, ROC}
\email{tsfu@mail.nptu.edu.tw}
\author[J.-C. Yao]{Jyun-Cheng Yao}
\address{Department of Mathematics, National Taiwan Normal University, Taipei 116325, Taiwan, ROC}
\email{0955526287er@gmail.com}
\begin{document}

\subjclass[2020]{05A19, 05E16, 20F55}
\keywords{noncrossing partition, 
parking functions, Coxeter group, absolute order}

\begin{abstract} 
For a finite Coxeter group $W$, Josuat-Verg\`es derived a $q$-polynomial counting the maximal chains in the lattice of noncrossing partitions of $W$ by weighting some of the covering relations, which we call bad edges, in these chains with a parameter $q$. We study the connection of these weighted chains with parking functions of type $A$ ($B$, respectively) from the perspective of the $q$-polynomial. The $q$-polynomial turns out to be the generating function for parking functions (of either type) with respect to the number of cars that do not park at their preferred spaces.  In either case, we present a bijective result that carries bad edges to unlucky cars while preserving their relative order. Using this, we give an interpretation of the $\gamma$-positivity of the $q$-polynomial in the case that $W$ is the hyperoctahedral group.
\end{abstract}

\maketitle

\section{Introduction} 
For a finite Coxeter group $W$ of rank $n$, Josuat-Verg\`es \cite{JV} derived a polynomial $M(W,q)$ (as shown in Theorem \ref{thm:M-polynomial})
which enumerates the maximal chains in the lattice of noncrossing partitions of $W$ by weighting some of the covering relations, called \emph{bad edges} (as defined below), in these chains with a parameter $q$. 
When the polynomial specialized at $q=1$, the value in the case of symmetric group $\mathfrak{S}_{n+1}$ (hyperoctahedral group $B_n$, respectively) is $(n+1)^{n-1}$ ($n^n$, respectively), which is known to be the number of parking functions of type $A$ ($B$, respectively) of size $n$. The polynomial turns out to be the generating function which counts parking functions (of either type) by the number of cars that do not park at their preferred spaces. Biane \cite{Biane} established a bijection between the maximal chains in the lattice of noncrossing partitions of $\mathfrak{S}_{n+1}$ ($B_n$, respectively) and parking functions of type $A$ ($B$, respectively). However, these two bijections are not established with respect to the polynomial. In either case, we come up with a bijection that not only carries the bad edges of a maximal chain to the unlucky cars of the corresponding parking function but also preserves their relative order (Theorems \ref{thm:type-A-PF-NC} and \ref{thm:type-B-PF-NC}). A highlight of our results is that in the case of $W=B_n$ we prove the $\gamma$-positivity of the polynomial, in terms of words over an alphabet with $n$ letters (Theorem \ref{thm:gamma-positive-for-words}), and give interpretations of the $\gamma$-coefficients using parking functions and the weighted chains (Theorem \ref{thm:gamma-positive}).

\subsection{Noncrossing lattices of $W$}
Let $S$ be the set of simple generators of $W$. Let $T=\{\omega s\omega^{-1} : \omega\in W, s\in S\}$, whose elements are the \emph{reflections} in $W$ with respect to roots. For any $\omega\in W$, the \emph{length} $\ell_S(\omega)$ of $\omega$ is the minimum integer $r$ such that there exists an expression $\omega=s_1s_2\cdots s_r$ with $s_i\in S$. Write $\omega\rightarrow \omega'$ if $\omega'=\omega t$ for some $t\in T$ with $\ell_S(\omega)<\ell_S(\omega')$. 
The (\emph{strong}) \emph{Bruhat order} $\le_B$ on $W$ is defined by setting $\omega <_B \omega'$ if there is a sequence $\omega= \omega_0 \rightarrow \omega_1 \rightarrow \cdots \rightarrow \omega_m=\omega'$.
On the other hand, the \emph{reflection length} $\ell_T(\omega)$ of $\omega$ is the minimum integer $r$ such that there exists an expression  $\omega=t_1t_2\cdots t_r$ with $t_i\in T$. The \emph{absolute order} $\le$ on $W$ is defined by setting $\omega < \omega' $ if $\ell_T(\omega')=\ell_T(\omega)+\ell_T(\omega^{-1}\omega')$.

Relative to a Coxeter element $c\in W$, the $c$-\emph{noncrossing partition lattice} $\NC(W,c)$ is defined as the interval in the group $W$ between the identity element $e$ and the Coxeter element $c$ under the absolute order (e.g. \cite{Bessis}, \cite{BW-02}, \cite{BW}),
\begin{equation} 
\NC(W,c):=\{\omega\in W : e\le \omega\le c\}.
\end{equation}
The structure of $\NC(W,c)$ is independent of $c$ (e.g. \cite{RRS}). Deligne \cite{Deligne} obtained the formula
\begin{equation} \label{eqn:Deligne}
\frac{n! h^n}{|W|},
\end{equation}
where $h$ is the Coxeter number of $W$, counting the number of factorizations of $c$ into $n$ reflections. Chapoton \cite[Proposition 9]{Chapoton} rediscovered the expression (\ref{eqn:Deligne}) by counting the maximal chains in the lattice $\NC(W,c)$. Recently, Chapuy and Douvropoulos \cite{Chapuy-Douvro} gave a case-free Coxeter-theoretic derivation of this chain number formula.  

An \emph{edge} of $\NC(W,c)$ is a pair $(\mu,\nu)$ of elements in $W$ such that $\nu$ \emph{covers} $\mu$ (i.e., $\mu< \nu$ and no element $\omega$ with $\mu< \omega< \nu$). An edge $(\mu,\nu)$ is said to be \emph{good} (\emph{bad}, respectively) if $\mu<_B \nu$ ($\mu>_B \nu$, respectively). The notion of good and bad edges comes from \cite[Definition~3.1]{JV}; see also \cite[Definition~4.10]{BJ}.

Given a maximal chain $\ww: e=\omega_0<\omega_1<\cdots<\omega_n=c$ in $\NC(W,c)$, let $\bad(\ww)$ denote the number of bad edges in $\ww$, i.e., $\bad(\ww):=\#\{(\omega_{i-1},\omega_i) : \omega_{i-1}>_B \omega_i, 1\le i\le n\}$.  Define
\begin{equation} \label{eqn:gf-mc-bad}
M(W,q):=\sum_{\ww} q^{\bad(\ww)},
\end{equation}
summed over all maximal chains $\ww$ in $\NC(W,c)$. As a refinement of (\ref{eqn:Deligne}), Josuat-Verg\`es \cite[Proposition~3.6]{JV} obtained the following result.

\begin{thm}[Josuat-Verg\`es] \label{thm:M-polynomial}
We have
\begin{equation*}
M(W,q)=\frac{n!}{|W|}\prod_{i=1}^{n} \big(d_i+q(h-d_i)\big),
\end{equation*}
where the integers $d_1,d_2,\dots,d_n$ are the fundamental degrees of $W$.
\end{thm}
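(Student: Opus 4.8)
The plan is to reduce to the case of irreducible $W$ and then to compute the weighted chain count by passing to reduced reflection factorizations of the Coxeter element. First I would record that both sides behave multiplicatively. If $W=W_1\times W_2$ with $c=c_1c_2$, then $\NC(W,c)\cong \NC(W_1,c_1)\times \NC(W_2,c_2)$; a maximal chain is then a pair of maximal chains of the factors together with a shuffle of their $n_1+n_2$ steps, and since the strong order and its length are additive on $W_1\times W_2$, an edge is bad exactly when its active coordinate is a bad edge of the corresponding factor. Hence $M(W_1\times W_2,q)=\binom{n_1+n_2}{n_1}M(W_1,q)M(W_2,q)$. Since the single Coxeter number in the statement signals that $W$ is taken irreducible, this reduces the theorem to the irreducible case, where a single $h$ and the degrees $d_1<\cdots<d_n$ (with $d_n=h$ and $\prod_i d_i=|W|$) are available.

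For irreducible $W$ I would translate the problem into factorizations. A maximal chain $\ww\colon e=\omega_0<\omega_1<\cdots<\omega_n=c$ corresponds bijectively to a reduced reflection factorization $c=t_1t_2\cdots t_n$ via $t_i=\omega_{i-1}^{-1}\omega_i$, and by the strong exchange property exactly one of $\omega_{i-1}<_B\omega_{i-1}t_i$ or $\omega_{i-1}t_i<_B\omega_{i-1}$ holds; thus the edge $(\omega_{i-1},\omega_i)$ is bad precisely when right multiplication by $t_i$ strictly decreases the $S$-length. In this language $\bad(\ww)$ is the number of $\le_B$-descending steps along the factorization, and $M(W,q)=\sum q^{\#\{i\,:\,\ell_S(\omega_i)<\ell_S(\omega_{i-1})\}}$ over all such factorizations. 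The specialization $q=1$ must reproduce Deligne's count $\tfrac{n!h^n}{|W|}$ of factorizations, and $q=0$ must leave exactly the $n!$ fully $\le_B$-increasing factorizations (here $\tfrac{n!}{|W|}\prod_i d_i=n!$); both serve as checks that the statistic and the normalization are correctly aligned with the claimed product.

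To obtain the product over the degrees I would try to set up a rank recursion, peeling off the atom $\omega_1=t_1$ and using $[t_1,c]\cong \NC(W',c')$ for a reflection group $W'$ of rank $n-1$. The difficulty, which I expect to be the main obstacle, is that the good/bad labeling is governed by the ambient strong order, not by the intrinsic absolute-order structure of the subinterval, so the bad count does not simply restrict to the smaller lattice and the recursion does not close on the product form; this is exactly the clash between the two orders that forces the $q=1$ case (Deligne's formula) to be proved either type by type or through the case-free machinery of Chapuy--Douvropoulos. Accordingly, for a complete argument I would finish type by type: for the classical types, realize the factorizations through the explicit labeled models developed later in the paper, under which the $\le_B$-descending steps become the unlucky cars of a parking function of type $A$ or $B$ and the generating function is directly the stated product; for the finitely many exceptional types, verify the product by a direct (computer-assisted) enumeration of maximal chains weighted by $\bad$, matching the tabulated degrees and Coxeter number. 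A uniform, type-free derivation of the $q$-refinement --- presumably obtainable by inserting the bad-edge weight into the Chapuy--Douvropoulos framework --- would be the most satisfying route, and is where the real effort should go.
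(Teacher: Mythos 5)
First, a point of calibration: the paper does not prove Theorem \ref{thm:M-polynomial} at all --- it is imported verbatim from Josuat-Verg\`es \cite[Proposition~3.6]{JV} and used as a black box, so there is no in-paper proof to match your argument against. Judged on its own merits, your proposal is a sound program up to a point: the reduction to irreducible $W$ via $M(W_1\times W_2,q)=\binom{n_1+n_2}{n_1}M(W_1,q)M(W_2,q)$ is correct (lengths are additive and each covering step moves in a single coordinate), the translation to reduced reflection factorizations with $\bad$ counting the $\ell_S$-decreasing steps is exactly the right setting, and the $q=0,1$ sanity checks are well chosen. But you then candidly concede that the uniform rank recursion does not close, and the fallback you substitute is not a complete proof. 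Concretely: the ``explicit labeled models developed later in the paper'' exist only for types $A$ and $B$ (Algorithms A--D and Theorems \ref{thm:type-A-PF-NC}, \ref{thm:type-B-PF-NC}); type $D_n$ is a classical infinite family with no parking-function model anywhere in this paper, and it is not among the ``finitely many exceptional types,'' so your case list simply omits it. Likewise the dihedral groups $I_2(m)$ form an infinite family and cannot be dispatched by a finite computer check (they are easy by hand --- rank $2$, with $2+(m-2)q$ as the chain polynomial --- but your text as written does not cover them).

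Two further cautions. If you do route types $A$ and $B$ through this paper's bijections, you must use only their bijective content: the paper's own surjectivity discussion for $\varphi_B$ momentarily invokes the equidistribution (\ref{eqn:B-polynomial})--(\ref{eqn:PF^B-generating-poly}), i.e.\ the very identity you would be proving, and only the explicit inverse (Algorithm D) rescues independence; in type $A$ you additionally need Gessel--Seo's product formula for $\sum_\alpha q^{\reluc(\alpha)}$ as an external input. Second, your closing suggestion --- inserting the bad-edge weight into the Chapuy--Douvropoulos $W$-Laplacian framework --- is flagged as ``where the real effort should go,'' which is an accurate self-assessment: that step is precisely the missing uniform argument, and nothing in the proposal carries it out. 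In short, the skeleton (multiplicativity, factorization language, consistency checks) is correct, but as a proof of Theorem \ref{thm:M-polynomial} the proposal has a genuine gap: no argument for type $D$ or the dihedral family, and no executed uniform derivation, so the product formula is established only for types $A$ and $B$ modulo external inputs.
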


We shall study the connection of parking functions of type $A$ ($B$, respectively) with the maximal chains in $\NC(W,c)$ by the polynomial $M(\mathfrak{S}_{n+1},q)$ ($M(B_n,q)$, respectively).

\subsection{Parking functions of type $A$} Consider a one-way street with $n$ parking spaces numbered from 1 to $n$ and a line of $n$ cars $C_1,\dots,C_n$ with preferred parking spaces $a_1,\dots, a_n$, accordingly.
The cars arrive sequentially and each car attempts to park at its preferred space. If that space is occupied, the car moves on and parks at the next available space. 
If there is no such parking space, the car leaves the street without parking.
The preference list $\alpha=(a_1,\dots, a_n)$ is called a \emph{parking function} (of type $A$) of \emph{size} $n$ if all cars park successfully. For each $i$, assume that the $i$th car $C_i$ parks at space $p_i$. The $n$-tuple $\pi=(p_1,\dots, p_n)$ is called the \emph{outcome} of $\alpha$. A car $C_i$ is said to be \emph{lucky} (\emph{reluctant}, respectively) if $p_i=a_i$ ($p_i\neq a_i$, respectively). Let $\lucky(\alpha)$ ($\reluc(\alpha)$, respectively) denote the number of lucky (reluctant, respectively) cars of $\alpha$. Note that $\lucky(\alpha)+\reluc(\alpha)=n$. The $\lucky$ statistic for parking functions was studied by Gessel and Seo \cite{GS}. We refer the readers to \cite{Yan} for a survey on various aspects of parking functions.

Let $\PF^A_n$ denote the set of parking functions of size $n$. In the case of $W=\mathfrak{S}_{n+1}$, we have 
\begin{equation} \label{eqn:A-polynomial}
M(\mathfrak{S}_{n+1},q)=\prod_{k=1}^{n-1} \big(n-k+1+kq\big),
\end{equation} 
which coincides with the generating function for parking functions with respect to their reluctant cars (cf. \cite[Theorem~10.1]{GS}), i.e.,
\begin{equation}  \label{eqn:PF^A-generating-poly}
M(\mathfrak{S}_{n+1},q)=\sum_{\alpha\in\PF^A_n} q^{\reluc(\alpha)}.
\end{equation}
Biane \cite{Biane} established a bijection between the maximal chains $\ww$ in $\NC(\mathfrak{S}_{n+1},c)$ and the parking functions $\alpha$ in $\PF^A_n$. This bijection turns out to be equivalent to a result by Stanley \cite{Stanley}. However, this bijection does not carry the bad edges in $\ww$ to the reluctant cars of $\alpha$. One of our main results is the following bijection.

\begin{thm} \label{thm:type-A-PF-NC} There is a bijection $\varphi_A:\alpha\mapsto \ww:e=\omega_0<\omega_1<\cdots<\omega_n=c$ between the parking functions in $\PF^A_n$ and the maximal chains in $\NC(\mathfrak{S}_{n+1},c)$ such that the $i$th car of $\alpha$ is reluctant if and only if $(\omega_{i-1},\omega_i)$ is a bad edge.
\end{thm}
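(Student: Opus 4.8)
The plan is to work throughout in the concrete model where $c=(1,2,\dots,n+1)$ is the long cycle, so that $\NC(\mathfrak{S}_{n+1},c)$ is the lattice of noncrossing partitions of $\{1,\dots,n+1\}$ and every $\omega\le c$ has each of its cycles written as an increasing sequence, with $\omega(x)$ the cyclic successor of $x$ inside its block. A maximal chain then becomes a factorization $c=t_1t_2\cdots t_n$, where $t_i=\omega_{i-1}^{-1}\omega_i=(a_i\,b_i)$ with $a_i<b_i$ merges two blocks of $\omega_{i-1}$ into a single increasing cycle. The first thing I would establish is a structural lemma classifying the covers: the edge $(\omega_{i-1},\omega_i)$ is \emph{good} exactly when the two merged blocks are \emph{side by side} (their convex hulls are disjoint intervals), the transposition then joining their two maxima, and it is \emph{bad} exactly when one block is \emph{nested} inside a gap of the other. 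Since two blocks of a noncrossing partition can never cross, these are the only two possibilities, so the lemma covers every cover. It follows from the standard fact that right-multiplying $\omega_{i-1}$ by $(a_i\,b_i)$ raises Bruhat length iff $\omega_{i-1}(a_i)<\omega_{i-1}(b_i)$, combined with the increasing-cycle description: in the side-by-side case $\omega_{i-1}(a_i)=\min U<\min V=\omega_{i-1}(b_i)$, while in the nested case the same computation reverses the inequality.

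With this criterion in hand, I would build $\varphi_A$ by processing the cars $C_1,\dots,C_n$ in order and growing the chain one cover at a time, maintaining an invariant that ties the current parking configuration to the partition $\omega_{i-1}$. After cars $C_1,\dots,C_{i-1}$ have parked, reading $a_i$ and the first free space $p_i$ should determine car $C_i$'s cover, forcing it to be a side-by-side merge when $p_i=a_i$ (so $C_i$ is \emph{lucky}) and a nested merge when $p_i>a_i$ (so $C_i$ is \emph{reluctant}); by the lemma this makes $(\omega_{i-1},\omega_i)$ good iff $C_i$ is lucky. The subtle point — and precisely the reason the obvious recipe of recording $\min t_i=a_i$ fails — is that the parking space of a car need not be the block element the merge turns non-maximal, so the identification between occupied spaces and non-maximal elements cannot be the identity: it must be carried by an order-preserving relabeling that is updated at each step so that a bumped car travels exactly across the already-merged elements and stops at the element the nested merge makes non-maximal. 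Setting up this relabeling correctly is the heart of the construction.

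Finally I would prove $\varphi_A$ is a bijection by exhibiting its inverse: reading the covers of a given maximal chain in order and recovering, through the relabeling, the preference of the corresponding car, then checking that the resulting list lies in $\PF^A_n$ and that the two maps are mutually inverse; since $|\PF^A_n|=(n+1)^{n-1}$ equals the number of maximal chains by Theorem~\ref{thm:M-polynomial} at $q=1$, injectivity alone would in fact suffice. The statistic statement then drops out of the lemma and the invariant together: $C_i$ is reluctant iff the $i$-th cover is nested iff the $i$-th edge is bad, with indices matched by construction, which is exactly the relative-order-preserving property asserted. I expect the main obstacle to be exactly the invariant of the second paragraph — proving that a bumped car halts precisely at the element the nested merge renders non-maximal — because the naive identifications (equating occupied spaces with non-maximal block elements, or preferences with transposition minima) break down, as one sees already in small examples such as the chain $((1\,5),(1\,3),(3\,4),(1\,2))$ in $\NC(\mathfrak{S}_5,c)$, whose bad edges $\{2,3,4\}$ are not matched by the reluctant cars of the preference list $(\min t_i)_i$.
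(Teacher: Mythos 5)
Your first paragraph is sound: the side-by-side versus nested classification of covers, with the good covers being exactly the side-by-side merges joined at the two block maxima, is equivalent to the paper's Proposition \ref{pro:A-cycle-edges} and Lemma \ref{lem:A-edge-classes}, and your length criterion $\omega_{i-1}(a_i)<\omega_{i-1}(b_i)$ is their Lemma \ref{lem:inv(u)-inv(v)}. But from there the proposal stops short of a proof: the map $\varphi_A$ is never actually defined. Everything hinges on the ``order-preserving relabeling'' and on the invariant that a bumped car halts precisely at the element the nested merge renders non-maximal, and you explicitly defer both (``setting up this relabeling correctly is the heart of the construction''; ``I expect the main obstacle to be exactly the invariant''). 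Without a concrete rule there is nothing to verify: well-definedness and the statistic correspondence cannot be checked, and your injectivity-plus-counting fallback is unavailable, since injectivity of an undefined map is vacuous. There is a second, quieter gap: the good/bad dichotomy alone does not determine the cover. For a reluctant car with given $(a_i,p_i)$ there are many nested merges available at a given step (Lemma \ref{lem:A-edge-classes} exhibits $k-j$ edges in each class $E_j$, all but one bad), and a bijection must select exactly one of them; your proposal contains no mechanism for that choice.

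It is worth noting how the paper evades exactly the obstacle you flagged: it builds the chain in the opposite direction. Algorithm A constructs $t_n,t_{n-1},\dots,t_1$ top-down, splitting cycles of $\omega_i$ rather than merging blocks of $\omega_{i-1}$, and it maintains an interval partition of $[n+1]$ into tuples of consecutive integers, cut immediately after each empty parking space, attached to the cycles. The reflection is then pinned down by the positions of $a_i$ and $p_i$ inside their common tuple: if $a_i=h+j$ and $p_i=h+k$ in the tuple of the cycle $(d_1,\dots,d_{m+1})$, then $t_i=(d_j,d_{m+1-k+j})$, which also settles the selection problem above. The invariant you struggled with becomes Lemma \ref{lem:pn=k} (for $j<i$, $p_j<p_i$ if and only if $a_j<p_i$), i.e., the outcome entry $p_i$ decomposes the earlier cars into two smaller parking functions matching the two cycles of the split. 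This is a two-line argument when read backwards from the last car, but it is precisely the statement that is awkward to maintain forwards, since where a bumped car halts depends on the entire set of occupied spaces and not only on the pair of blocks merged at step $i$. In short: correct classification lemma, correct diagnosis of the difficulty, but the construction that the theorem requires is missing, so the proposal is a plan rather than a proof.
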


\subsection{Parking functions of type $B$}
Now, arrange the $n$ parking spaces in a circle. There are still $n$ cars with preferred parking spaces $a_1,\dots, a_n$, accordingly. Starting just before space 1, each car in turn moves around the circle to park at its preferred space if possible; otherwise, it moves on to the next available space. Each car will eventually park. An $n$-tuple $\alpha=(a_1,\dots, a_n)$ of positive integers is a \emph{parking function of type $B$} of \emph{size} $n$ if $a_i\le n$ for all $i$. The outcome of $\alpha$ and the statistics $\lucky(\alpha)$ and $\reluc(\alpha)$ are defined in the same manner as that of a type-$A$ parking function.
Let $\PF^B_n$ denote the set of parking functions of type $B$ of size $n$.

Let $[n]:=\{1,\dots,n\}$ and let $[\pm n]:=\{1,2,\dots,n,-1,-2,\dots,-n\}$. A \emph{signed permutation} of the set $[\pm n]$ is a bijection $\omega$ of the set onto itself such that $\omega(-i)=-\omega(i)$ for all $i\in [n]$.
The \emph{hyperoctahedral group} $B_n$ is the group of signed permutations of $[\pm n]$.
In the case of $W=B_n$, we have 
\begin{equation} \label{eqn:B-polynomial}
M(B_n,q)=\prod_{k=0}^{n-1} \big(n-k+kq\big).
\end{equation}
It is easy to see that this polynomial is the generating function for parking functions of type $B$ with respect to their reluctant cars, i.e., 
\begin{equation}  \label{eqn:PF^B-generating-poly}
M(B_n,q)=\sum_{\alpha\in\PF^B_n} q^{\reluc(\alpha)}.
\end{equation}
Biane \cite{Biane} gave an edge labeling of $\NC(B_n,c)$ that establishes a bijection between the maximal chains $\ww$ in $\NC(B_n,c)$ and the parking functions $\alpha$ in $\PF^B_n$. However, this bijection is not in respect of the statistics $\bad(\ww)$ and $\reluc(\alpha)$, either. We present a bijection that carries the bad edges in $\ww$ to the reluctant cars of $\alpha$ while preserving their relative order.

\begin{thm} \label{thm:type-B-PF-NC} There is a bijection $\varphi_B:\alpha\mapsto \ww:e=\omega_0<\omega_1<\cdots<\omega_n=c$ between the parking functions in $\PF^B_n$ and the maximal chains in $\NC(B_n,c)$ such that the $i$th car of $\alpha$ is reluctant if and only if $(\omega_{i-1},\omega_i)$ is a bad edge.
\end{thm}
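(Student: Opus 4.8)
The plan is to construct the bijection $\varphi_B$ explicitly by describing how to build a maximal chain in $\NC(B_n,c)$ from a type-$B$ parking function, reading off one covering relation per car in order. First I would fix a convenient Coxeter element $c$ of $B_n$, say the signed $n$-cycle $c=(1,2,\dots,n,-1,-2,\dots,-n)$, so that $\NC(B_n,c)$ has the standard model of $B_n$-noncrossing partitions of a circular arrangement of the $2n$ points $\{1,\dots,n,-1,\dots,-n\}$ (with the central symmetry $i\mapsto -i$). In this model, each covering relation corresponds either to merging two centrally-symmetric pairs of arcs (a ``paired'' or type-$A$-like move) or to absorbing a new point into the block through the center (a ``zero'' move), and the absolute-order rank increases by one at each step. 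The key computation is to pin down, in this circular model, exactly which covering relations are bad edges, i.e.\ when $\omega_{i-1}>_B\omega_i$ in the Bruhat order, and to match this against the reluctance condition $p_i\neq a_i$ for the cars.

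The core of the argument is an inductive/sequential reading of the parking function. For $i=1,\dots,n$, I would let the $i$th car $C_i$ with preference $a_i$ determine the $i$th covering relation $\omega_{i-1}\lessdot\omega_i$ as follows: the car's preferred space $a_i$, together with the spaces already occupied by $C_1,\dots,C_{i-1}$ (recorded in the partial chain $\omega_{i-1}$), dictates which arc of the circular noncrossing structure is formed. The guiding principle is that a \emph{lucky} car---one parking at its preferred space---creates a covering relation that increases Bruhat length (a good edge), while an \emph{unlucky} (reluctant) car, which must slide past one or more occupied spaces to its actual spot $p_i$, creates a relation where the new reflection $t_i=\omega_{i-1}^{-1}\omega_i$ is ``smaller'' in Bruhat order, yielding a bad edge. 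I would make this precise by translating the parking dynamics (scan clockwise from $a_i$ to the first free space) into the combinatorics of inserting a new block-element into the noncrossing diagram, and verifying that the good/bad dichotomy of the resulting edge is governed precisely by whether $a_i=p_i$.

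To prove that $\varphi_B$ is a well-defined bijection, I would exhibit the inverse map: given a maximal chain, read the edges $(\omega_{i-1},\omega_i)$ in order, and from each edge recover the preferred space $a_i$ by inverting the insertion rule (reconstructing where the car ``wanted'' to go from where it actually landed together with the occupied spaces). The factor count already matches---Theorem~\ref{thm:M-polynomial} and \eqref{eqn:B-polynomial} give $|\NC(B_n,c)$-chains$|=n^n=|\PF^B_n|$ at $q=1$---so it suffices to check that $\varphi_B$ and its proposed inverse compose to the identity on one side, which then forces bijectivity by finiteness and cardinality equality. The statistic-matching statement then follows immediately from the good/bad-versus-lucky/unlucky correspondence established edge-by-edge, and ``preserving relative order'' is automatic because the construction processes cars and edges with the same index $i$.

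The hard part will be the second paragraph: correctly handling the ``zero'' moves that are special to type $B$ (the single block passing through the antipodal center of the circle), since these have no analogue in the type-$A$ construction of Theorem~\ref{thm:type-A-PF-NC} and the Bruhat-order comparison for the associated reflections is more delicate. I expect the main obstacle to be verifying that the good/bad classification of precisely these central covering relations matches the lucky/unlucky status of the corresponding car; this will likely require an explicit description of the signed-permutation factor $t_i$ at each step and a direct check of $\ell_S(\omega_{i-1})$ versus $\ell_S(\omega_i)$ in the cases where the car wraps around the circle. Once that case analysis is settled, the type-$A$ argument of Theorem~\ref{thm:type-A-PF-NC} should transfer with the circular and centrally-symmetric bookkeeping added.
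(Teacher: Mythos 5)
There is a genuine gap, and it sits exactly where your plan puts the main work: the sequential \emph{forward} construction, in which the $i$th edge $\omega_{i-1}\lessdot\omega_i$ is determined by $a_i$ together with the occupancy created by $C_1,\dots,C_{i-1}$, cannot be a bijection at all, for a counting reason independent of how cleverly the insertion rule is designed. Already in $B_2$: the identity has four covers in $\NC(B_2,c)$ (all four reflections $(1,-1)$, $(2,-2)$, $(1,2)(-1,-2)$, $(1,-2)(-1,2)$ are atoms), but car $C_1$ has only two possible preferences, so a rule in which $t_1$ depends only on $a_1$ reaches at most two of the four atoms; dually, above an atom there is a unique maximal chain while $C_2$ has two preferences. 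The local branching numbers of the lattice (read bottom-up) simply do not match the local choice numbers of the parking process, so $t_i$ must depend on the \emph{later} cars as well. This is visible in the paper's Table \ref{tab:map-PF-B3}: the parking functions $121$, $122$, $131$ all have $a_1=1$ yet are sent to chains with first reflections $(2,-2)$, $(1,-1)$, $(3,-3)$ respectively. The paper resolves this by first computing the full outcome $\pi=(p_1,\dots,p_n)$ and then building the factorization \emph{top-down}, from $t_n$ to $t_1$ (Algorithm C), where the branching does match: the $m^2$ edges below a balanced cycle of size $2m$ (Lemma \ref{lem:zero-cycle-edge-classes}) and the $\binom{k}{2}$ edges below a paired cycle (Lemma \ref{lem:nonzero-cycle-edge-classes}) are selected using the pair $(a_i,p_i)$. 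Your proposal has no counterpart to the other indispensable ingredient either: the decomposition of the parking function that tracks the cycle splittings. In type $B$ this is the zeta-naught set of Procedure C.2 and Lemma \ref{lem:separation-zeta-set}, which shows that splitting off the balanced cycle splits $\alpha$ into a type-$B$ and a type-$A$ parking function on the appropriate (non-interval!) sets of spaces; without it, well-definedness --- that $a_i$ always lies in the same region as $p_i$ (Proposition \ref{pro:preserving-B}(i)) --- has no proof.

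Your statistic-matching principle (lucky $\Leftrightarrow$ good) is the right target but is asserted rather than given a mechanism. In the paper it is realized by partitioning the edges below each cycle into classes $E_j$ resp.\ $F_j$, each of which contains \emph{exactly one} good edge (Lemmas \ref{lem:zero-cycle-edge-classes} and \ref{lem:nonzero-cycle-edge-classes}, built on the Bruhat-length case analysis of Propositions \ref{pro:zero-cycle} and \ref{pro:nonzero-cycle}), together with the explicit linear ordering of each class in Procedure C.1 placing the good edge first; luckiness ($a_i=p_i$) then corresponds to picking the first reflection of the relevant class. Note in particular that Proposition \ref{pro:zero-cycle}(iii) shows \emph{every} edge with reflection $(i,-j)(-i,j)$ below a balanced cycle is bad --- the ``zero moves'' you flag as delicate are not good-or-bad according to a wrap-around criterion one can guess from parking dynamics, but require exactly this case analysis. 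Two parts of your plan are sound and agree with the paper: fixing $c=(1,2,\dots,n,-1,\dots,-n)$ with the $\NC^B_n$ model, and deducing bijectivity from injectivity plus the cardinality identity (the paper uses the refined identity (\ref{eqn:B-polynomial})$=$(\ref{eqn:PF^B-generating-poly}), and then still constructs $\varphi_B^{-1}$ explicitly via Algorithm D, whose reconstruction of the balanced-cycle data from the chain is itself nontrivial). But as architected --- forward, car-by-car, with edge $i$ a function of $a_1,\dots,a_i$ only --- the construction fails before the type-$B$ subtleties are even reached.
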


\medskip
\subsection{Gamma-positivity of $M(B_n,q)$}
A polynomial $C(q)=c_0+c_1q+\cdots+c_nq^n$ with palindromic coefficients (i.e., $c_{n-i}=c_i$) can be written as a sum of polynomials of the form $q^j(1+q)^{n-2j}$, 
\[
C(q)=\sum_{j=0}^{\lfloor n/2\rfloor} \gamma_{j}\, q^j(1+q)^{n-2j}.
\]
The palindromic polynomial $C(q)$ is said to be $\gamma$-\emph{positive} if $\gamma_j\ge 0$ for all $j$. 
Several of the initial polynomials of $M(B_n,q)$ are listed below:
\begin{align*}
M(B_1,q) &= 1,\\
M(B_2,q) &= 2+2q,\\
M(B_3,q) &= 6+15q+6q^2, \\
M(B_4,q) &= 24+104q+104q^2+24q^3, \\
M(B_5,q) &= 120+770q+1345q^2+770q^3+120q^4.
\end{align*}
These terms appear in an OEIS sequence \cite[A071208]{oeis}. Notice that these palindromic polynomials can be expanded as follows.
\begin{align*}
M(B_1,q) &= 1,\\
M(B_2,q) &= 2(1+q),\\
M(B_3,q) &= 6(1+q)^2+3q, \\
M(B_4,q) &= 24(1+q)^3+32q(1+q), \\
M(B_5,q) &= 120(1+q)^4+290q(1+q)^2+45q^2.
\end{align*}

For $0\le k\le \lfloor\frac{n-1}{2}\rfloor$, let $B(n,k)$ be the set of $k$-tuples $(b_1,b_2,\dots,b_k)$ of integers such that $1<b_1<b_2<\cdots<b_k<n$ and $b_{i+1}-b_i>1$ for each $i\in [k-1]$. We present the following interpretation of $\gamma$-positivity of the polynomial $M(B_n,q)$.

\medskip
\begin{thm} \label{thm:gamma-positive} For all $n\ge 1$, we have
\begin{equation*}
M(B_n,q) = \sum_{j=0}^{\lfloor (n-1)/2\rfloor} \gamma_{n,j} q^j(1+q)^{n-1-2j},
\end{equation*}
where $\gamma_{n,j}$ counts the following objects:
\begin{enumerate} 
\item the parking functions  $\alpha$ in $\PF^B_n$ with $\reluc(\alpha)=j$ such that there are no two consecutive reluctant cars, and the last car of $\alpha$ is lucky;
\item the maximal chains $\ww:e=\omega_0<\omega_1<\cdots<\omega_n=c$ in $\NC(B_n,c)$ with $\bad(\ww)=j$ such that there are no two consecutive bad edges, and $(\omega_{n-1},\omega_n)$ is a good edge.
\end{enumerate}
Moreover, $\gamma_{n,j}$ can be calculated by
\begin{equation} \label{eqn:calculation}
\gamma_{n,j}=n!\cdot\sum_{(b_1,b_2,\cdots,b_j)\in B(n,j)} \prod_{i=1}^{j} \left(\frac{n-b_i}{b_i}\right).
\end{equation}
\end{thm}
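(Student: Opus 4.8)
The plan is to split the theorem into an enumeration and an algebraic identity. By the bijection $\varphi_B$ of Theorem \ref{thm:type-B-PF-NC}, which carries reluctant cars to bad edges while preserving their relative order, interpretations (1) and (2) are equivalent: ``no two consecutive reluctant cars'' becomes ``no two consecutive bad edges'', and ``the last car is lucky'' becomes ``$(\omega_{n-1},\omega_n)$ is good''. It therefore suffices to prove the $\gamma$-expansion of $M(B_n,q)$ together with the formula (\ref{eqn:calculation}) and the parking-function interpretation (1).

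My main tool for the enumeration is a per-pattern refinement of (\ref{eqn:PF^B-generating-poly}). Running the circular parking process car by car, when $C_i$ arrives there are always exactly $i-1$ occupied and $n-i+1$ free spaces, independent of which spaces are occupied; hence exactly $i-1$ choices of $a_i$ make $C_i$ reluctant and $n-i+1$ make it lucky. By the multiplication principle, for any prescribed set $R\subseteq[n]$ of reluctant positions the number of $\alpha\in\PF^B_n$ whose reluctant cars are exactly those indexed by $R$ equals $\prod_{i\in R}(i-1)\prod_{i\notin R}(n-i+1)$; summing $q^{|R|}$ with this weight over all $R$ reproves (\ref{eqn:PF^B-generating-poly}). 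Specializing to the restricted parking functions of part (1), namely $R$ sparse (no two consecutive) with $n\notin R$ (and automatically $1\notin R$, as the factor $i-1$ vanishes there), I factor out $\prod_{i=1}^{n}(n-i+1)=n!$ and reindex by $b=n+1-i$; since $(i-1)/(n-i+1)=(n-b)/b$ and the reindexing preserves sparseness on $\{2,\dots,n-1\}$, the count of such $\alpha$ with $|R|=j$ becomes exactly $n!\sum_{(b_1,\dots,b_j)\in B(n,j)}\prod_i\frac{n-b_i}{b_i}$, the right-hand side of (\ref{eqn:calculation}).

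It remains to prove the $\gamma$-expansion itself, that $M(B_n,q)=\sum_j\gamma_{n,j}\,q^j(1+q)^{n-1-2j}$ with these same $\gamma_{n,j}$. Writing $M(B_n,q)=n!\prod_{b=1}^{n-1}\bigl(1+\tfrac{n-b}{b}q\bigr)$ from (\ref{eqn:B-polynomial}) and pairing the reciprocal factors $b\leftrightarrow n-b$ gives $\bigl(1+\tfrac{n-b}{b}q\bigr)\bigl(1+\tfrac{b}{n-b}q\bigr)=(1+q)^2+s_bq$ with $s_b=\tfrac{(n-2b)^2}{b(n-b)}\ge 0$ (plus one leftover factor $1+q$ when $n$ is even). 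This already establishes palindromicity and $\gamma$-positivity, and expresses the coefficients as $\gamma_{n,j}=n!\,e_j(s_1,\dots,s_{\lfloor (n-1)/2\rfloor})$. The principal obstacle is to identify this symmetric-function form with the sparse-subset form of the previous paragraph, i.e.\ to prove
\[
e_j\bigl(s_1,\dots,s_{\lfloor (n-1)/2\rfloor}\bigr)=\sum_{(b_1,\dots,b_j)\in B(n,j)}\prod_{i=1}^{j}\frac{n-b_i}{b_i}.
\]
I would attack this via the substitution $v=q/(1+q)$, under which $q^j(1+q)^{n-1-2j}=\bigl(v(1-v)\bigr)^j(1+q)^{n-1}$ and $\prod_b\bigl(1+\tfrac{n-b}{b}q\bigr)=(1+q)^{n-1}\prod_b\bigl(1+\tfrac{n-2b}{b}v\bigr)$, reducing the claim to $\prod_{b=1}^{n-1}\bigl(1+\tfrac{n-2b}{b}v\bigr)=\sum_j\bigl(\sum_{B(n,j)}\prod\frac{n-b_i}{b_i}\bigr)\bigl(v(1-v)\bigr)^j$. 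The left side is manifestly invariant under $v\mapsto 1-v$ (the factor for $b$ maps to $\tfrac{n-b}{b}$ times the factor for $n-b$, and $\prod_b\tfrac{n-b}{b}=1$), so both sides are series in $u=v(1-v)$; I would match them by induction, comparing the product against the ``no two consecutive'' recurrence $g_c=g_{c-1}+\tfrac{n-c}{c}\,u\,g_{c-2}$ for the sparse-subset generating function.

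The crux is this last identity. It is genuinely special to the weights $r_b=(n-b)/b$ — it fails for arbitrary reciprocal-paired weights, already disagreeing at $j=1$ — so no purely formal manipulation of palindromic products will suffice. An attractive alternative that would bypass it is to prove the $\gamma$-expansion bijectively, by a valley-hopping action on the reluctant-patterns $R$ that reorganizes $\sum_R w(R)q^{|R|}$ directly into $\sum_{\text{restricted }R} w(R)q^{|R|}(1+q)^{n-1-2|R|}$, so that (1) and the expansion fall out together. The difficulty there is that the pattern weight $w(R)=\prod_{i\in R}(i-1)\prod_{i\notin R}(n-i+1)$ is position-dependent, so a free slot cannot simply be flipped between lucky and reluctant (the two states carry unequal weights $n-i+1$ and $i-1$); the action must instead hop reluctant marks between paired positions $i\leftrightarrow n-i+2$ in a weight-compatible way, and ensuring each orbit has a unique sparse, last-lucky representative is the main thing to get right.
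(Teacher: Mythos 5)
Your overall architecture is sound, and two of its three components are complete and correct. The reduction of interpretation (2) to interpretation (1) via $\varphi_B$ is exactly the paper's first step. Your per-pattern refinement is correct and is actually a nice shortcut: since car $i$ always finds $i-1$ occupied and $n-i+1$ free spaces regardless of history, the number of $\alpha\in\PF^B_n$ with reluctant set exactly $R$ is $\prod_{i\in R}(i-1)\prod_{i\notin R}(n-i+1)$, and your reindexing $b=n+1-i$ correctly identifies the count of the restricted parking functions in (1) with the right-hand side of (\ref{eqn:calculation}). The paper obtains the equivalent count less directly, by passing through the word bijection $\psi_B$ of Theorem \ref{thm:bijection-type-B-PF-Word} and the excedance-set enumeration $f(A)=n!\prod_j\bigl(\tfrac{n-b_j}{b_j}\bigr)$ inside the proof of Lemma \ref{lem:gf-Fnk}. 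Your pairing of reciprocal factors reproduces Lemma \ref{lem:gamma-nj}, so palindromicity, $\gamma$-positivity, and $\gamma_{n,j}=n!\,e_j(s_1,\dots,s_m)$ with $s_b=\tfrac{(n-2b)^2}{b(n-b)}$, $m=\lfloor(n-1)/2\rfloor$, are all in hand.

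The genuine gap is the identity you yourself flag as the crux, $e_j(s_1,\dots,s_m)=\sum_{(b_1,\dots,b_j)\in B(n,j)}\prod_i\tfrac{n-b_i}{b_i}$: without it, the two expressions you have derived for $\gamma_{n,j}$ are never reconciled, and the combinatorial interpretation of the $\gamma$-coefficients remains unproven. Neither of your proposed attacks is carried out, and the induction you sketch does not close as stated. The partial products $Q_c(v)=\prod_{b=1}^{c}\bigl(1+\tfrac{n-2b}{b}v\bigr)$ are not functions of $u=v(1-v)$ alone for general $c$ (only complete $b\leftrightarrow n-b$ pairs are), so one must track a two-component state $Q_c=A_c(u)+vB_c(u)$ via $v^2=v-u$; already for $n=5$ one finds $A_2=1-\tfrac32u$ while $g_2=1+\tfrac32u$, so the naive hypothesis $A_c=g_c$ is false, and you supply neither the strengthened invariant nor the verification that $B_{n-1}=0$ with $A_{n-1}=g_{n-1}$. (Your valley-hopping alternative is likewise only a sketch, with the weight-compatibility obstruction you note left unresolved.) This missing evaluation is precisely what the paper provides: it encodes the sparse generating function as a determinant by the Gessel--Stanley method (Lemma \ref{lem:gf-Fnk}(ii)), reduces it to a tridiagonal determinant, and evaluates that in closed product form by Chu's formula (Theorem \ref{thm:Chu}), after which matching with Lemma \ref{lem:gamma-nj} closes the identity. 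Your continuant recurrence $g_c=g_{c-1}+\tfrac{n-c}{c}\,u\,g_{c-2}$ is exactly the three-term recurrence of that tridiagonal determinant, so your plan is morally the same computation; but until it, or an equivalent of Chu's evaluation, is actually executed, the proof of Theorem \ref{thm:gamma-positive} is incomplete at its central step.
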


\medskip
The rest of this paper is organized as follows. We establish the bijections in Theorems \ref{thm:type-A-PF-NC} and \ref{thm:type-B-PF-NC} in Sections 2 and 3, respectively. To prove the $\gamma$-positivity of $M(B_n,q)$, we prepare a bijective result, with words on an alphabet with $n$ letters, for the enumeration of $\gamma$-coefficients in Section 4. The proof of Theorem \ref{thm:gamma-positive} is given in Section 5.

\section{Noncrossing partitions and parking functions of type $A$}
\subsection{Noncrossing partitions of type $A$}
A \emph{noncrossing partition} of $[n]$ is a set partition with the condition that there are no $a,b,c,d$ with $1\le a<b<c<d\le n$, $\{a,c\}$ contained in some block of the partition, and $\{b,d\}$ contained in some other block. The set of noncrossing partitions of $[n]$ forms a lattice $\NC_{n}$, ordered by reverse refinement, with maximum element $\{\{1,2,\dots,n\}\}$ and minimum element $\{\{1\},\{2\},\dots,\{n\}\}$.

Fix the symmetric group $\mathfrak{S}_{n+1}$ on $[n+1]$ and the Coxeter element $c=(1,2,\dots,n+1)$. If we associate every permutation $\sigma\in\mathfrak{S}_{n+1}$ with the partition of $[n+1]$ given by the cycle structure of $\sigma$, this defines a poset isomorphism from $\NC(\mathfrak{S}_{n+1},c)$ to $\NC_{n+1}$ \cite{Biane97}. The reflections of $\mathfrak{S}_{n+1}$ are the transpositions, i.e., $T=\{(i,j) : 1\le i<j\le n+1\}$. A maximal chain $\ww: e=\omega_0<\omega_1<\cdots<\omega_n=c$ in $\NC(\mathfrak{S}_{n+1},c)$ corresponds to a factorization $t_1t_2\cdots t_n$ of $c$ into $n$ reflections, where $\omega^{-1}_{i-1}\omega_i=t_i\in T$. 

Suppose $\delta=(d_1,\dots,d_m)$ is a cycle of $\sigma$ and $t$ is a reflection with $t\le\delta$ (in absolute order), say $t=(d_i,d_j)$, where  $i<j$. If we perform $t$ and then $\sigma$, observe that $\delta$ is split into two cycles $(d_1,\dots,d_i,d_{j+1},\dots,d_m)$ and $(d_{i+1},\dots,d_j)$ of $\sigma t$.  Let $\cyc(\sigma)$ denote the number of \emph{cycles} in  $\sigma$. Notice that (e.g. \cite[Lemma~4.1.5]{Armstrong})
\begin{equation} \label{eqn:n-cyc-A}
\ell_T(\sigma) =n+1-\cyc(\sigma).
\end{equation}

Let $\sigma=\sigma(1)\sigma(2)\cdots\sigma(n+1)$. An \emph{inversion} of $\sigma$ is a pair $(\sigma(i),\sigma(j))$ such that $\sigma(i)>\sigma(j)$, $1\le i<j\le n+1$. The \emph{inversion number} $\inv(\sigma)$ of $\sigma$ is defined to be the number of inversions of $\sigma$. It is known \cite[Proposition~1.5.2]{BB} that 
\begin{equation} \label{eqn:inv-A}
\ell_S(\sigma) =\inv(\sigma).
\end{equation}
Recall that $(\mu,\nu)$ is a good (bad, respectively) edge in $\NC(W,c)$ if $\ell_S(\mu)<\ell_S(\nu)$ ($\ell_S(\mu)>\ell_S(\nu)$, respectively). By (\ref{eqn:n-cyc-A}) and (\ref{eqn:inv-A}), we have the following observations.

\medskip
\begin{lem} \label{lem:inv-good}
Let $(\mu,\nu)$ be an edge in $\NC(\mathfrak{S}_{n+1},c)$ with $\mu^{-1}\nu=(i,j)$. We have
\begin{enumerate}
\item $(\mu,\nu)$ is a good edge if $\inv(\mu)<\inv(\nu)$, and a bad edge if $\inv(\mu)>\inv(\nu)$.
\item The entries $i$ and $j$ occur in the same cycle of $\nu$, and in two different cycles of $\mu$.
\end{enumerate}
\end{lem}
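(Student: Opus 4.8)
The plan is to reduce both statements to the single structural fact that, because $(\mu,\nu)$ is a cover in the absolute order, we have $\ell_T(\nu)=\ell_T(\mu)+1$ and $\nu=\mu(i,j)$ with $(i,j)$ an honest transposition (a reflection). Everything then follows from how right multiplication by a transposition interacts with Coxeter length on the one hand and with cycle type on the other.

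For part (i), I would argue directly from the definitions of $\rightarrow$ and $\le_B$. Since $\nu=\mu t$ with $t=(i,j)\in T$, the pair $(\mu,\nu)$ realizes a single $\rightarrow$-step in one direction or the other: if $\ell_S(\mu)<\ell_S(\nu)$ then by definition $\mu\rightarrow\nu$, whence $\mu<_B\nu$; if $\ell_S(\mu)>\ell_S(\nu)$ then (using $t^2=e$, so $\mu=\nu t$) we get $\nu\rightarrow\mu$, whence $\mu>_B\nu$. Because a transposition has odd Coxeter length, $\ell_S(\mu)$ and $\ell_S(\nu)$ have opposite parities and so are never equal, making these two cases exhaustive. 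Substituting $\ell_S=\inv$ from (\ref{eqn:inv-A}) turns the length comparison into the inversion comparison, and the definition of good/bad edges yields the claim.

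For part (ii), I would use the cycle-count formula. The covering relation gives $\ell_T(\nu)=\ell_T(\mu)+1$, so by (\ref{eqn:n-cyc-A}) we get $\cyc(\nu)=\cyc(\mu)-1$: passing from $\mu$ to $\nu=\mu(i,j)$ must decrease the number of cycles by exactly one. Now I invoke the split/merge dichotomy for right multiplication by a transposition, which is precisely the observation recorded just before (\ref{eqn:n-cyc-A}): if $i$ and $j$ lie in the same cycle of $\mu$ then $\mu(i,j)$ splits that cycle and $\cyc$ increases by one, whereas if $i$ and $j$ lie in different cycles of $\mu$ then $\mu(i,j)$ merges them and $\cyc$ decreases by one. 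Since we have just shown $\cyc(\nu)=\cyc(\mu)-1$, only the merge case is possible; hence $i$ and $j$ lie in distinct cycles of $\mu$, and after merging they occupy a common cycle of $\nu$, which is exactly what part (ii) asserts.

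I expect the only real care to be in bookkeeping the conventions: that the right-multiplication product $\mu(i,j)=\nu$ (rather than $(i,j)\mu$) is the one governed by the split/merge description, and that the orientation of the Bruhat comparison matches the orientation of the absolute-order cover. Neither requires new ideas; once the conventions are pinned down, the proof is a direct application of (\ref{eqn:inv-A}), (\ref{eqn:n-cyc-A}), and the split/merge observation, with the parity remark ensuring the length comparison in part (i) is strict and the two cases there are genuinely exhaustive.
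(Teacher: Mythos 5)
Your proposal is correct and follows the same route the paper intends: the paper states Lemma \ref{lem:inv-good} as an immediate observation from $\ell_S=\inv$ in (\ref{eqn:inv-A}), $\ell_T=n+1-\cyc$ in (\ref{eqn:n-cyc-A}), and the split/merge behavior of right multiplication by a transposition recorded just before (\ref{eqn:n-cyc-A}), which is exactly what you use. Your write-up merely makes explicit the one-step Bruhat comparison $\mu\rightarrow\nu$ (or $\nu\rightarrow\mu$) and the parity argument ruling out $\ell_S(\mu)=\ell_S(\nu)$, details the paper leaves implicit.
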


\begin{exa} \label{exa:good-bad} {\rm
Let $\nu=2\,3\,4\,5\,6\,7\,8\,1$, written in cycle notation as $\nu=(1,2,\dots,8)$. Consider the transposition $t=(5,8)$. We obtain $\mu=\nu t=2\,3\,4\,5\,1\,7\,8\,6$, written in cycle notation as $\mu=(1,2,3,4,5)(6,7,8)$. Since $\inv(\nu)=7$ and $\inv(\mu)=6$, $(\mu,\nu)$ is a good edge. 
}
\end{exa}

The following observation will be used in Proposition \ref{pro:A-cycle-edges} to distinguish between good and bad edges in $\NC(\mathfrak{S}_{n+1},c)$.

\medskip
\begin{lem} \label{lem:inv(u)-inv(v)} If $\mu^{-1}\nu=(i, j)$ and $i<j$, we have $\inv(\mu)>\inv(\nu)$ if and only if $\nu(i)<\nu(j)$.
\end{lem}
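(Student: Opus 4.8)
The plan is to reduce the statement to a direct inversion count. First I would rewrite the hypothesis $\mu^{-1}\nu=(i,j)$ as $\mu=\nu(i,j)$, using that the transposition $(i,j)$ is an involution. Reading this in one-line notation, $\mu$ is obtained from $\nu$ simply by exchanging the values in positions $i$ and $j$: setting $a:=\nu(i)$ and $b:=\nu(j)$, we have $\mu(i)=b$, $\mu(j)=a$, and $\mu(k)=\nu(k)$ for every $k\notin\{i,j\}$. The entire problem then becomes to determine the sign of $\inv(\mu)-\inv(\nu)$ according to whether $a<b$ or $a>b$.

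Next I would bookkeep exactly which pairs of positions contribute to $\inv(\mu)-\inv(\nu)$. Any pair $(k,l)$ with $k,l\notin\{i,j\}$ is unaffected, since $\mu$ and $\nu$ agree off $\{i,j\}$. For a position $k\notin\{i,j\}$ lying outside the interval, that is $k<i$ or $k>j$, the two pairs it forms with positions $i$ and $j$ merely swap their inversion statuses when $a$ and $b$ are interchanged, so their combined contribution to the difference is $0$. Hence only the pair $(i,j)$ itself and the positions $k$ with $i<k<j$ can contribute. A short check over the three value-ranges $\nu(k)<\min(a,b)$, $\min(a,b)<\nu(k)<\max(a,b)$, and $\nu(k)>\max(a,b)$ shows that an interior $k$ changes the count by $0$ unless $\nu(k)$ lies strictly between $a$ and $b$, in which case it changes it by $+2$ when $a<b$ and by $-2$ when $a>b$; meanwhile the pair $(i,j)$ contributes $+1$ when $a<b$ and $-1$ when $a>b$.

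Putting these together yields
\[
\inv(\mu)-\inv(\nu)=\varepsilon\Big(1+2\,\#\{k: i<k<j,\ \min(a,b)<\nu(k)<\max(a,b)\}\Big),
\]
where $\varepsilon=+1$ if $a<b$ and $\varepsilon=-1$ if $a>b$. Since the quantity in parentheses is a positive integer, the sign of $\inv(\mu)-\inv(\nu)$ equals $\varepsilon$, so $\inv(\mu)>\inv(\nu)$ holds exactly when $a<b$, i.e.\ when $\nu(i)<\nu(j)$, which is the assertion. I expect the only delicate point to be the middle step: verifying cleanly that the outside positions cancel in pairs and organizing the interior contributions by value-range, rather than any conceptual difficulty.
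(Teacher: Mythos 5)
Your proposal is correct and follows essentially the same route as the paper: the paper's proof simply observes that $\mu(i)=\nu(j)$, $\mu(j)=\nu(i)$, and $\mu(k)=\nu(k)$ for $k\neq i,j$, and then appeals to the definition of the inversion number. Your detailed bookkeeping---cancellation of positions outside $[i,j]$, the $\pm 2$ contributions of interior positions with values strictly between $\nu(i)$ and $\nu(j)$, and the $\pm 1$ from the pair $(i,j)$ itself, giving $\inv(\mu)-\inv(\nu)=\varepsilon\bigl(1+2\,\#\{k: i<k<j,\ \min(a,b)<\nu(k)<\max(a,b)\}\bigr)$---is exactly the computation the paper leaves implicit, carried out correctly.
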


\begin{proof} Note that $\mu(i)=\nu(j)$, $\mu(j)=\nu(i)$ and $\mu(k)=\nu(k)$ for $k\neq i, j$. The result follows from the definition of inversion number of a permutation.
\end{proof}

\medskip
\begin{pro} \label{pro:A-cycle-edges} Let $(\mu,\nu)$ be an edge of $\NC(\mathfrak{S}_{n+1},c)$ with $\mu^{-1}\nu=(i, j)$, where $i<j$. Suppose $(d_1, d_2, \dots, d_k)$ is the cycle of $\nu$ containing $i$ and $j$ for some integers $d_1<d_2<\cdots<d_k$. We have
\begin{enumerate}
\item $(\mu,\nu)$ is a bad edge if $i,j\in\{d_1,\dots,d_{k-1}\}$.
\item $(\mu,\nu)$ is a good edge if $i\in\{d_1,\dots,d_{k-1}\}$ and $j=d_k$.
\end{enumerate}
\end{pro}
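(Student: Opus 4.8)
The plan is to reduce everything to the single criterion provided by Lemma~\ref{lem:inv(u)-inv(v)}. Combined with Lemma~\ref{lem:inv-good}(i), that lemma tells us that, since $i<j$, the edge $(\mu,\nu)$ is bad exactly when $\nu(i)<\nu(j)$ and good exactly when $\nu(i)>\nu(j)$. So the whole proposition becomes a matter of comparing the two values $\nu(i)$ and $\nu(j)$, and for this I only need to understand how $\nu$ acts on the entries of the cycle $(d_1,d_2,\dots,d_k)$.

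First I would record the action of $\nu$ on this cycle. Writing the cycle in the given increasing form means $\nu(d_m)=d_{m+1}$ for $1\le m\le k-1$ and $\nu(d_k)=d_1$; that is, $\nu$ increases every entry of the cycle except the largest one $d_k$, which it sends down to the smallest entry $d_1$. Because $i<j$ and both lie among the $d_m$, I can write $i=d_a$ and $j=d_b$ with $a<b$.

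For part (i), the hypothesis $i,j\in\{d_1,\dots,d_{k-1}\}$ gives $a<b\le k-1$, so both $\nu(i)=d_{a+1}$ and $\nu(j)=d_{b+1}$ are obtained by the increasing rule; since $a+1<b+1\le k$ we get $\nu(i)=d_{a+1}<d_{b+1}=\nu(j)$, and the edge is bad. For part (ii), the hypothesis $i\in\{d_1,\dots,d_{k-1}\}$ and $j=d_k$ gives $a\le k-1$ and $b=k$, so $\nu(i)=d_{a+1}$ while $\nu(j)=\nu(d_k)=d_1$; as $a+1\ge 2$ we have $d_1<d_{a+1}$, hence $\nu(i)>\nu(j)$ and the edge is good.

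There is no real obstacle here: the entire argument is a direct reading of the cycle of $\nu$ together with Lemma~\ref{lem:inv(u)-inv(v)}. The only point that deserves care is the case analysis on whether $j$ equals the top of the cycle $d_k$ or not, since $d_k$ is the unique entry at which $\nu$ decreases; this is precisely what separates the bad case (i) from the good case (ii).
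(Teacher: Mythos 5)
Your proof is correct and follows essentially the same route as the paper: both reduce the question to comparing $\nu(i)$ and $\nu(j)$ via Lemmas~\ref{lem:inv-good} and~\ref{lem:inv(u)-inv(v)} and then read off the action of $\nu$ on the increasingly-written cycle ($\nu(d_m)=d_{m+1}$ for $m<k$, $\nu(d_k)=d_1$). The only cosmetic difference is that you argue the implications directly while the paper phrases the same dichotomy in the reverse direction; the content is identical.
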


\begin{proof} Note that $\nu(d_k)=d_1$ and $\nu(d_r)=d_{r+1}$ for each $r\in [k-1]$. By Lemmas \ref{lem:inv-good} and \ref{lem:inv(u)-inv(v)}, $(\mu,\nu)$ is a bad (good, respectively) edge if $\nu(i)<\nu(j)$ ($\nu(i)>\nu(j)$, respectively). In the former case, we have $i,j\in\{d_1,\dots,d_{k-1}\}$, and in the latter case $i\in\{d_1,\dots,d_{k-1}\}$ and $j=d_k$. 
\end{proof}

\medskip
\begin{exa} \label{exa:A-edges} {\rm
Suppose $\delta=(1,2,3,7,8)$ is a cycle of an element $\nu\in\NC(\mathfrak{S}_8,c)$ and $(\mu,\nu)$ is an edge with $\mu^{-1}\nu\leq\delta$.
Note that $(\mu,\nu)$ is a bad edge if $\mu^{-1}\nu\in\{(1,2), (2,3), (3,7), (1,3)$, $(2,7), (1,7)\}$, and a good edge if $\mu^{-1}\nu\in\{(1,8), (2,8), (3,8), (7,8)\}$.
}
\end{exa}

By Proposition \ref{pro:A-cycle-edges}, note that there are $\binom{k}{2}$ edges $(\mu,\nu)$ with transposition $\mu^{-1}\nu\leq\delta$, where $\delta=(d_1, d_2, \dots, d_k)$ is a cycle of $\nu$. Among them, $k-1$ edges are good. In the following result, these edges are partitioned into $k-1$ sets with exactly one good edge in each set.

\begin{lem} \label{lem:A-edge-classes} 
Suppose $\delta=(d_1,\dots,d_k)$ is a cycle of an element  $\nu\in\NC(\mathfrak{S}_{n+1},c)$ for some integers $ d_1<\cdots<d_k$, let $E_1,\dots,E_{k-1}$ be a partition of the edges $(\mu,\nu)$ with $\mu^{-1}\nu\leq\delta$, defined by
\begin{equation}
E_j:=\{(\mu,\nu) : \mu^{-1}\nu=(d_i, d_{i+j}), 1\le i\le k-j\}.
\end{equation}
Then the following properties hold.
\begin{enumerate}
\item $|E_j|=k-j$.
\item Each $E_j$ contains exactly one good edge.
\item For any $(\mu,\nu)\in E_j$, the cycle $\delta$ is split by the transposition $\mu^{-1}\nu$ into a $(k-j)$-cycle and a $j$-cycle.
\end{enumerate}
\end{lem}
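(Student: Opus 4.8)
The plan is to establish the three properties in turn, leaning on Proposition~\ref{pro:A-cycle-edges} and the cycle-splitting description recorded at the beginning of this section. A preliminary remark I would make is that since $t=\mu^{-1}\nu$ is a transposition, $\nu=\mu t$ gives $\mu=\nu t$, so the lower element of each edge is obtained from $\nu$ by right-multiplication with the transposition; this is exactly the situation to which the splitting rule applies. For part~(1), I would simply count parameters: the transpositions of the form $(d_i,d_{i+j})$ are constrained by $1\le i$ and $i+j\le k$, so $i$ ranges over $\{1,\dots,k-j\}$, whence $|E_j|=k-j$.

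For part~(2), the governing datum is whether the largest entry $d_k$ of the cycle appears in the transposition. Because $d_1<\cdots<d_k$, for any $(\mu,\nu)\in E_j$ with $\mu^{-1}\nu=(d_i,d_{i+j})$ the smaller entry is $d_i$ and the larger is $d_{i+j}$, so Proposition~\ref{pro:A-cycle-edges} applies with its ``$i$'' equal to $d_i$ and its ``$j$'' equal to $d_{i+j}$. That proposition then says $(\mu,\nu)$ is good exactly when the larger entry equals $d_k$, i.e. when $i+j=k$, and bad otherwise. As $i$ runs over $\{1,\dots,k-j\}$, the equation $i=k-j$ has the single admissible solution $i=k-j$, so $E_j$ contains exactly one good edge (at $i=k-j$) together with $k-j-1$ bad edges, which are bad because $i+j<k$ forces both entries into $\{d_1,\dots,d_{k-1}\}$.

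For part~(3), I would invoke the splitting rule stated earlier: if $t=(d_a,d_b)$ with $a<b$ in the cycle $\delta=(d_1,\dots,d_k)$, then $\nu t$ splits $\delta$ into $(d_1,\dots,d_a,d_{b+1},\dots,d_k)$ and $(d_{a+1},\dots,d_b)$, of sizes $k-(b-a)$ and $b-a$. For $(\mu,\nu)\in E_j$ one has $a=i$ and $b=i+j$, so $b-a=j$, and the cycle splits into a $(k-j)$-cycle and a $j$-cycle as claimed. The main point to handle carefully — more a bookkeeping subtlety than a genuine obstacle — is the interplay between the \emph{position} of an entry within the cycle word $(d_1,\dots,d_k)$ and its \emph{numerical value}: Proposition~\ref{pro:A-cycle-edges} is phrased in terms of values while the splitting rule is phrased in terms of positions. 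I would make explicit that, because the entries are listed in increasing order, position order and value order coincide, so the transposition $(d_i,d_{i+j})$ simultaneously swaps the positions $i,\,i+j$ and the values $d_i<d_{i+j}$, allowing both results to be applied to the same edge without ambiguity.
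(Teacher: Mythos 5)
Your proof is correct and follows essentially the same route as the paper's own: part (1) by the obvious count of $i$, part (2) by applying Proposition~\ref{pro:A-cycle-edges} to conclude that the unique good edge in $E_j$ is the one with $\mu^{-1}\nu=(d_{k-j},d_k)$, and part (3) by the cycle-splitting rule, which yields the cycles $(d_1,\dots,d_i,d_{i+j+1},\dots,d_k)$ and $(d_{i+1},\dots,d_{i+j})$ of $\mu$. Your extra remark that position order and value order coincide because $d_1<\cdots<d_k$ is a sound piece of bookkeeping that the paper leaves implicit; nothing further is needed.
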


\begin{proof}
(i) That $|E_j|=k-j$ is clear. (ii) By Proposition \ref{pro:A-cycle-edges}, the edge $(\mu,\nu)$ with $\mu^{-1}\nu=(d_{k-j},d_k)$ is the only good edge in $E_j$. (iii) For any $(\mu,\nu)\in E_j$, say $\mu^{-1}\nu=(d_i,d_{i+j})$, we observe that $(d_1,\dots,d_i,d_{i+j+1},\dots,d_k)$ and $(d_{i+1},\dots,d_{i+j})$ are two cycles of $\mu$.
\end{proof}

\medskip
\subsection{The bijection $\varphi_A$}
In what follows we shall establish a bijection $\varphi_A$ between the parking functions in $\PF^A$ and the maximal chains in $\NC(\mathfrak{S}_{n+1},c)$.

Let $\alpha=(a_1,\dots,a_n)\in\PF^A_n$ with outcome $\pi=(p_1,\dots,p_n)$. The following result shows that every entry $p_i$ determines a decomposition of the prefix $a_1,\dots, a_{i-1}$ of $\alpha$ into two parking functions of certain sizes.

\begin{lem} \label{lem:pn=k} For all $j<i$, we have
\begin{enumerate}
\item  $p_j<p_i$ if and only if $a_j<p_i$;
\item  $p_j>p_i$ if and only if $a_j>p_i$.
\end{enumerate}
\end{lem}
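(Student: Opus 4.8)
The plan is to exploit two elementary but fundamental features of the parking process: that each car only ever moves forward from its preferred space, so $a_m\le p_m$ for every $m$; and that the set of occupied spaces only grows over time, so once a space is taken by some car it remains taken for every later arrival. Since distinct cars occupy distinct spaces, the values $p_1,\dots,p_n$ are pairwise distinct, and in particular $p_j\ne p_i$ whenever $j<i$.

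First I would dispose of the forward implication of (i): if $p_j<p_i$ then $a_j\le p_j<p_i$, giving $a_j<p_i$ at once. The substance lies in the converse. Suppose $a_j<p_i$ but, toward a contradiction, $p_j>p_i$ (equality being excluded). Because car $C_j$ settles at $p_j$ having started from $a_j$, every space in the range $\{a_j,a_j+1,\dots,p_j-1\}$ is already occupied when $C_j$ arrives. The inequalities $a_j<p_i<p_j$ place $p_i$ inside this range, so $p_i$ is occupied by the time $C_j$ has parked. As $j<i$, car $C_i$ arrives strictly later, and by monotonicity of occupancy the space $p_i$ is still occupied when $C_i$ arrives, contradicting that $C_i$ parks there. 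Hence $p_j<p_i$, which proves (i).

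To obtain (ii), I would first record the auxiliary fact that $a_j\ne p_i$ for $j<i$: were $a_j=p_i$, then when $C_j$ arrives this space is either free, forcing $p_j=a_j=p_i$ and violating distinctness, or already occupied, in which case $p_i$ is occupied before $C_i$ arrives, again contradicting that $C_i$ parks at $p_i$. With this in hand, both the pair $(p_j,p_i)$ and the pair $(a_j,p_i)$ obey a strict dichotomy (each is either $<$ or $>$, never $=$), so (ii) is exactly the contrapositive companion of (i): the relation $p_j>p_i$ is the negation of $p_j<p_i$, and $a_j>p_i$ is the negation of $a_j<p_i$.

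I expect the only delicate point to be phrasing the occupancy argument precisely with respect to the time-ordering of arrivals, making explicit that ``occupied when $C_j$ arrives'' upgrades to ``occupied when $C_i$ arrives'' because $j<i$ and spaces are never vacated. Everything else is bookkeeping with the inequalities $a_m\le p_m$ and the distinctness of the outcome entries $p_m$.
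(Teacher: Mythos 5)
Your proof is correct and rests on the same elementary facts as the paper's: $a_m\le p_m$, monotone occupancy, and the observation that the space $p_i$ remains free until car $C_i$ arrives (the paper phrases your occupied-interval contradiction directly, noting that $\ell=p_i$ is still empty when $C_j$ parks, so $a_j<\ell$ forces $p_j<\ell$). Your derivation of (ii) from (i) via the trichotomy $a_j\neq p_i$, $p_j\neq p_i$ is a minor stylistic variant of the paper's ``same argument'' remark and is equally valid.
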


\begin{proof} Let $p_i=\ell$. Since the $j$th car precedes the $i$th car, the parking space $\ell$ is empty when the $j$th car has parked.  Thus, $a_j<\ell$ implies $p_j<\ell$. Since $a_j\le p_j$,  it follows that $p_j<\ell$ implies $a_j<\ell$. Thus, the assertion (i) follows.

The assertion (ii) is proved by the same argument.
\end{proof}

\medskip
Let $\varphi_A(\alpha)=\ww$, where $\ww: e=\omega_0<\omega_1<\cdots<\omega_n=c$, be the corresponding maximal chain of $\alpha$, and let $t_1t_2\cdots t_n$ be the corresponding factorization of $c=(1,2,\dots,n+1)$, where $t_i=\omega^{-1}_{i-1}\omega_i$. We shall construct $t_1,t_2,\dots, t_n$ in reverse order by the following procedure. When working on $t_i$, the cycles of $\omega_i$ have been ordered $\delta_1\cdots \delta_{n-i+1}$, and each cycle $\delta_r$ is associated with a tuple $\zeta_r$ of consecutive integers of the same size as $\delta_r$, where $\zeta_1,\dots,\zeta_{n-i+1}$ form an interval partition of $[n+1]$.   
In terms of parking spaces, this interval partition is obtained by cutting after every empty space, i.e., the non-maximal entries of each interval $\zeta_r$ have been occupied.
Initially, we associate $\omega_n=\delta_1$ with the $(n+1)$-tuple $\zeta_1=(1,2,\dots,n+1)$.

\medskip
\noindent
{\bf Algorithm A}

For $i=n,\dots,1$, the reflection $t_i$ is determined by $a_i$ and $p_i$ as follows.

(i) Among the tuples $\zeta_1,\dots,\zeta_{n-i+1}$, find the tuple that contains the integer $p_i$, say $\zeta_r$. 
Let $\zeta_r=(h+1,h+2,\dots,h+m+1)$ for some integers $h$ and $m$, say $a_i=h+j$ and $p_i=h+k$, where $j\le k$. 
Suppose the cycle $\delta_r$ is $(d_1,d_2,\dots,d_{m+1})$ for some integers $d_1<d_2<\cdots<d_{m+1}$.

(ii) Set $t_i=(d_{j},d_{m+1-k+j})$.
Note that $\delta_r$ is split by $t_i$ into a $k$-cycle $\delta_{r,1}$ and a $(m-k+1)$-cycle $\delta_{r,2}$, written in the order $\delta_{r,1}\delta_{r,2}$, where
\begin{equation}
\delta_{r,1}=(d_1,\dots,d_{j},d_{m+2-k+j},\dots,d_{m+1}),\qquad \delta_{r,2}=(d_{j+1},\dots,d_{m+1-k+j}).
\end{equation}
Their associated tuples $\zeta_{r,1}, \zeta_{r,2}$ are obtained by cutting $\zeta_r$ at the immediate right of $p_i$, i.e.,  
\begin{equation} \label{eqn:zeta-sequences}
\zeta_{r,1}=(h+1,\dots,h+k),\qquad \zeta_{r,2}=(h+k+1,\dots,h+m+1). 
\end{equation}

\medskip
\begin{exa} \label{exa:PF-to-noncrossing-A} {\rm
For $n=7$, let $\alpha=(2,2,6,1,6,1,3)$ with outcome $\pi=(2,3,6,1,7,4,5)$. Let $\varphi_A(\alpha)=t_1t_2\cdots t_7$, where $t_i=\omega^{-1}_{i-1}\omega_i$. The construction of $t_1,t_2,\cdots,t_7$ is shown in Table \ref{tab:construct-type-A-noncrossing}, where the good and bad edges are indicated by $\circ$ and $\times$, respectively, in the last column. Some initial steps are described below.
\begin{itemize}
\item $a_7=3$ and $p_7=5$. By $\delta_1=(1,\dots,8)$, we set $t_7=(3,6)$ and then the cycle $\delta_1$ is split into $\delta_{1,1}=(1,2,3,7,8)$ and $\delta_{1,2}=(4,5,6)$ associated with the tuples $\zeta_{1,1}=(1,2,3,4,5)$ and $\zeta_{1,2}=(6,7,8)$, respectively.
\item $a_6=1$ and $p_6=4$. Note that $p_6$ is contained in the tuple $\zeta_1=(1,2,3,4,5)$ associated to $\delta_1=(d_1,\dots,d_5)=(1,2,3,7,8)$. We set $t_6=(d_1,d_2)=(1,2)$ and then the cycle $\delta_1$ is split into
$\delta_{1,1}=(1,3,7,8)$ and $\delta_{1,2}=(2)$ associated with the tuples $\zeta_{1,1}=(1,2,3,4)$ and $\zeta_{1,2}=(5)$, respectively. 
\end{itemize}
}
\end{exa}

\begin{table}[ht]
\caption{The construction of the maximal chain $\varphi_A(\alpha)=t_1t_2\cdots t_7$ in Example \ref{exa:PF-to-noncrossing-A}.}
\centering
\begin{tabular}{c|ccccllc|c}
$i$ &  &  $a_i$ & $p_i$  & $t_i$ & \multicolumn{1}{c}{$\omega_i$} & \multicolumn{1}{c}{$\zeta_1\cdots\zeta_{n-i+1}$} &   & $\circ\times$  \\[2pt]
\hline
  7 &  &  3  &  5   &           &  $(1,2,3,4,5,6,7,8)$ &  $(1,2,3,4,5,6,7,8)$     &       &      \\[-6pt]
    &  &     &      &  $(3,6)$   &    &       &       &   $\times$ \\  
  6 &  &  1  &  4   &            &  $(1,2,3,7,8)(4,5,6)$   &  $(1,2,3,4,5)(6,7,8)$      &    &    \\[-6pt]
    &  &     &      &  $(1,2)$   &     &      &    &   $\times$ \\
  5 &  &  6  &  7   &            &  $(1,3,7,8)(2)(4,5,6)$  &  $(1,2,3,4)(5)(6,7,8)$     &    &    \\[-6pt]
    &  &     &      &  $(4,5)$   &     &      &    &   $\times$ \\
  4 &  &  1  &  1   &            &  $(1,3,7,8)(2)(4,6)(5)$ &  $(1,2,3,4)(5)(6,7)(8)$    &    &   \\[-6pt]
    &  &     &      &  $(1,8)$   &    &       &    &   $\circ$ \\ 
  3 &  &  6  & 6    &     &  $(1)(3,7,8)(2)(4,6)(5)$ &  $(1)(2,3,4)(5)(6,7)(8)$   &        &    \\[-6pt]
    &  &     &      &  $(4,6)$   &    &      &        &   $\circ$ \\
  2 &  &  2  & 3    &       &  $(1)(3,7,8)(2)(4)(6)(5)$  &   $(1)(2,3,4)(5)(6)(7)(8)$  &     &    \\[-6pt]
    &  &     &      &  $(3,7)$   &     &      &     &   $\times$ \\ 
  1 &  &  2  & 2    &      &  $(1)(3,8)(7)(2)(4)(6)(5)$  &  $(1)(2,3)(4)(5)(6)(7)(8)$ &       &     \\[-6pt]
    &  &     &      &  $(3,8)$   &     &    &       &   $\circ$ \\
  0 &  &     &          &   &   $(1)(3)(8)(7)(2)(4)(6)(5)$ &  $(1)(2)(3)(4)(5)(6)(7)(8)$                  &    &    
\end{tabular}
\label{tab:construct-type-A-noncrossing}
\end{table}

\medskip
\begin{pro} \label{pro:preserving-A} We have
\begin{enumerate}
\item  The maximal chain $\varphi_A(\alpha)$ is well-defined.
\item  For $1\le i\le n$, $(\omega_{i-1},\omega_i)$ is a bad edge in $\varphi_A(\alpha)$ if and only if the $i$th car of $\alpha$ is reluctant.
\end{enumerate}
\end{pro}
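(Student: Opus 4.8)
The plan is to verify that Algorithm A is well-defined at each step and then match the bad/good classification of the resulting edges to the reluctant/lucky status of the cars. For part (i), the key is to confirm that the algorithm's input is always legitimate: when processing $t_i$, I must check that the integer $p_i$ lies in exactly one tuple $\zeta_r$, and that within the notation $\zeta_r=(h+1,\dots,h+m+1)$ with $a_i=h+j$ and $p_i=h+k$, the inequality $j\le k$ genuinely holds so that the reflection $t_i=(d_j,d_{m+1-k+j})$ is well-formed (i.e.\ its two indices are distinct when $j<k$ and the construction degenerates correctly when $j=k$). The inequality $a_i\le p_i$ is immediate from the parking rule, and the claim that $a_i$ and $p_i$ fall in the \emph{same} interval $\zeta_r$ is exactly the content that the interval partition records: since $\zeta_r$ stops only after an empty space and every space strictly between $a_i$ and $p_i$ was occupied when car $C_i$ arrived, no cut can occur in between. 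I would make this precise using Lemma \ref{lem:pn=k}, which says that for earlier cars ($j<i$) the outcome $p_j$ and preference $a_j$ lie on the same side of $p_i$; this is what guarantees the interval structure is consistent with the parking dynamics as we peel off cars in reverse.

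For part (ii), the heart of the matter is that the edge $(\omega_{i-1},\omega_i)$ is bad precisely when $j<k$ (car $C_i$ is reluctant) and good when $j=k$ (car $C_i$ is lucky). By definition, car $C_i$ is lucky iff $a_i=p_i$, i.e.\ iff $h+j=h+k$, i.e.\ iff $j=k$. So I must show: $t_i=(d_j,d_{m+1-k+j})$ yields a good edge exactly when $j=k$. When $j=k$ the reflection is $(d_j,d_{m+1})=(d_k,d_{m+1})$, and since $d_{m+1}$ is the largest element of the cycle $\delta_r=(d_1,\dots,d_{m+1})$, Proposition \ref{pro:A-cycle-edges}(ii) (with the role of $d_k$ there played by $d_{m+1}$) tells us this is a good edge. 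When $j<k$, the reflection is $(d_j,d_{m+1-k+j})$ with $m+1-k+j\le m$, so both indices lie in $\{d_1,\dots,d_m\}$, and Proposition \ref{pro:A-cycle-edges}(i) declares the edge bad. This is a clean case split once the indexing is pinned down.

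The main obstacle I anticipate is the bookkeeping in part (i): proving rigorously that the tuple $\zeta_r$ containing $p_i$ also contains $a_i$, and that the cut prescribed by \eqref{eqn:zeta-sequences} keeps the interval partition synchronized with the parking process throughout the reverse construction. The subtlety is that the interval partition is defined dynamically (``cut after every empty space''), and one must argue that removing car $C_i$ and splitting $\zeta_r$ at the position immediately right of $p_i$ exactly reconstitutes the interval partition associated to $\omega_{i-1}$. I would handle this by an induction on the reverse index $i$, maintaining the loop invariant that after processing $t_i$, the maximal entries of the tuples $\zeta_1,\dots,\zeta_{n-i+1}$ are precisely the spaces that are empty once cars $C_i,\dots,C_n$ have been removed (equivalently, the spaces occupied by $C_1,\dots,C_{i-1}$ are exactly the non-maximal entries). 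Granting this invariant, the location of $a_i$ relative to the cut follows from Lemma \ref{lem:pn=k}, and the parenthesization of cycles in Algorithm A, step (ii), matches the cycle-splitting described in Lemma \ref{lem:A-edge-classes}(iii), closing the induction.
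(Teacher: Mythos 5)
Your proposal is correct and follows essentially the same route as the paper: part (i) by descending induction using Lemma \ref{lem:pn=k} to keep each $a_r$ in the same interval as $p_r$ across the cuts of (\ref{eqn:zeta-sequences}), and part (ii) by the case split $j=k$ versus $j<k$ via Proposition \ref{pro:A-cycle-edges}. One small remark: nothing ``degenerates'' when $j=k$ --- since $p_i$ is occupied it is a non-maximal entry of $\zeta_r$, so $k\le m$ and the two indices $d_j$, $d_{m+1-k+j}$ of $t_i$ are always distinct, with $j=k$ giving the good edge $(d_j,d_{m+1})$ exactly as you conclude.
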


\begin{proof} (i) By the determination of $t_i$ in Algorithm A, it suffices to prove that $a_i$ occurs in the same tuple as $p_i$ for each $i$. In the initial step, the entries $a_n, p_n$ are in the $(n+1)$-tuple $\zeta_1=(1,2,\dots,n+1)$.

For $i\le n$, suppose the entries $a_i,p_i$ occur in the  tuple $\zeta_r=(h+1,\dots,h+m+1)$ associated to a cycle $\delta_r=(d_1,\dots,d_{m+1})$ of $\omega_i$, say $a_i=h+j$ and $p_i=h+k$.
By (\ref{eqn:zeta-sequences}), the tuples $\zeta_{r,1}$ and $\zeta_{r,2}$ are obtained from $\zeta_r$ by cutting after the entry $p_i$. By Lemma \ref{lem:pn=k}, $a_{i-1}$ occurs in the same tuple as $p_{i-1}$.  Hence $\varphi_A(\alpha)$ is well-defined.

(ii) We set $t_i=(d_{j},d_{m+1-k+j})$, where $t_i=\omega_{i-1}^{-1}\omega_i$. By Proposition \ref{pro:A-cycle-edges}, $(\omega_{i-1},\omega_i)$ is a good (bad, respectively) edge if $j=k$ ($j<k$, respectively), in which case $a_i=p_i$ ($a_i<p_i$, respectively). The result follows.
\end{proof}

\medskip
The map $\varphi_A^{-1}$ can be established by a reverse operation.  Given a maximal chain $\ww:e=\omega_0<\omega_1<\cdots<\omega_n=c$ in $\NC(\mathfrak{S}_{n+1},c)$, let $t_1t_2\cdots t_n$ be the factorization of $c$, where $t_i=\omega^{-1}_{i-1}\omega_i$. We shall construct the corresponding parking function $\varphi_A^{-1}(\ww)=(a_1,\dots,a_n)$ with outcome $\pi=(p_1,\dots,p_n)$ in reverse order by the following procedure. Likewise, when working on $a_i$ and $p_i$, the cycles of $\omega_i$ have been ordered $\delta_1\cdots \delta_{n-i+1}$ with an interval partition $\zeta_1,\dots,\zeta_{n-i+1}$ of $[n+1]$. Initially, we associate $\omega_n=\delta_1$ with the $(n+1)$-tuple $\zeta_1=(1,2,\dots,n+1)$. 

\medskip
\noindent
{\bf Algorithm B}

For $i=n,\dots,1$, the entries $a_i$ and $p_i$ are determined by $t_i$ as follows.

(i) Among the cycles $\delta_1, \dots, \delta_{n-i+1}$ of $\omega_i$, find the cycle that is comparable with $t_i$, say $t_i\leq\delta_r$. Let $\delta_r=(d_1,\dots,d_{m+1})$ for some integers $d_1<\cdots<d_{m+1}$ and let $\zeta_r=(h+1,\dots,h+m+1)$ for some integer $h$. Suppose $t_i=(d_{j},d_{k})$, where $j<k$.

(ii) Set $a_i=h+j$ and $p_i=h+m+1-k+j$. 
Note that the cycle $\delta_r$ is split by $t_i$ into a $(m+1-k+j)$-cycle $\delta_{r,1}$ and a $(k-j)$-cycle $\delta_{r,2}$, where
\begin{equation*}
\delta_{r,1}=(d_1,\dots,d_{j},d_{k+1},\dots,d_{m+1}); \qquad \delta_{r,2}=(d_{j+1},\dots,d_{k}).
\end{equation*}
For their associated tuples, we cut $\zeta_r$ after the entry $p_i$ into 
\begin{equation*}
\zeta_{r,1}=(h+1,\dots,h+m+1-k+j);\qquad \zeta_{r,2}=(h+m+2-k+j,\dots, h+m+1).
\end{equation*}

\medskip
The proof of Theorem \ref{thm:type-A-PF-NC} is completed. The correspondence of the bijection $\varphi_A$ for $n=3$ is listed in Table \ref{tab:map-PF-A3} for reference.

\begin{table}[ht]
\caption{The map $\varphi_A:\alpha\mapsto\ww=t_1t_2t_3$ for all $\alpha\in\PF^A_3$.}
\centering
{\small
\begin{tabular}{cccc|ccc|c}
\hline
 $\alpha$  & & $\pi$  & & & $\ww$ & &  $\circ\times$\\
\hline
  123   & &  123   & & & $(1,2)(2,3)(3,4)$    & &  \multirow{6}{0.8cm}{$\circ\circ\circ$}    \\[2pt]
  132   & &  132   & & & $(1,2)(3,4)(2,4)$    & &      \\[2pt]
  213   & &  213   & & & $(2,3)(1,3)(3,4)$    & &      \\[2pt]
  231   & &  231   & & & $(2,3)(3,4)(1,4)$    & &      \\[2pt]
  312   & &  312   & & & $(3,4)(1,2)(2,4)$    & &      \\[2pt]
  321   & &  321   & & & $(3,4)(2,4)(1,4)$    & &      \\
\hline  
  121   & &  123   & & & $(1,3)(3,4)(1,2)$    & &  \multirow{6}{0.8cm}{$\circ\circ\times$}    \\[2pt]
  122   & &  123   & & & $(1,2)(2,4)(2,3)$    & &      \\[2pt]
  131   & &  132   & & & $(1,4)(2,3)(1,3)$    & &      \\[2pt]
  211   & &  213   & & & $(3,4)(1,4)(1,2)$    & &      \\[2pt]
  212   & &  213   & & & $(2,4)(1,4)(2,3)$    & &      \\[2pt]
  311   & &  312   & & & $(2,3)(1,4)(1,3)$    & &      \\
\hline
  113   & &  123   & & & $(1,3)(1,2)(3,4)$    & &  \multirow{2}{0.8cm}{$\circ\times\circ$}    \\[2pt]
  221   & &  231   & & & $(2,4)(2,3)(1,4)$    & &      \\
\hline
  111   & &  123   & & & $(1,4)(1,3)(1,2)$    & &  \multirow{2}{0.8cm}{$\circ\times\times$}    \\[2pt]
  112   & &  123   & & & $(1,4)(1,2)(2,3)$    & &      \\
\hline
\end{tabular}
}
\label{tab:map-PF-A3}
\end{table}

\medskip
Let $A_n(q):=M(\mathfrak{S}_{n+1},q)$. We observe that the polynomial $A_n(q)$ satisfies the following recurrence relation.

\smallskip
\begin{pro} 
For all $n\ge 1$, we have
\begin{equation*}
A_n(q)=\sum_{k=1}^{n} \big(1+(n-k)q\big) \binom{n-1}{k-1} A_{k-1}(q)A_{n-k}(q),
\end{equation*}
with  $A_0(q)=1$.
\end{pro}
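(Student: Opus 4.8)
The plan is to prove the recurrence combinatorially on the parking-function side, using $A_n(q)=\sum_{\alpha\in\PF^A_n}q^{\reluc(\alpha)}$ from \eqref{eqn:PF^A-generating-poly}, by decomposing each parking function according to the behaviour of its \emph{last} car $C_n$. (Equivalently, one could argue inside $\NC(\mathfrak{S}_{n+1},c)$ by decomposing a maximal chain through its top reflection $t_n=\omega_{n-1}^{-1}c$ and invoking Lemma \ref{lem:A-edge-classes}; the two viewpoints are interchangeable under $\varphi_A$.) First I would fix $\alpha=(a_1,\dots,a_n)\in\PF^A_n$ with outcome $\pi=(p_1,\dots,p_n)$ and record the spot $p_n=s$ at which $C_n$ parks. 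Since the outcome of a parking function of size $n$ is a permutation of $[n]$, every space is occupied; in particular no car other than $C_n$ parks at $s$, from which (an earlier car preferring $s$ would occupy $s$ or a later space before $C_n$ arrives) one gets $a_j\neq s$ for all $j<n$.

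The decomposition step is where Lemma \ref{lem:pn=k} does the work. Among $C_1,\dots,C_{n-1}$ it gives $a_j<s\iff p_j<s$ and $a_j>s\iff p_j>s$, so exactly $s-1$ of these cars prefer and occupy the spaces $\{1,\dots,s-1\}$ while the remaining $n-s$ prefer and occupy $\{s+1,\dots,n\}$. Read in its inherited order, the first group is a parking function $\alpha_1$ of size $s-1$; the second group, after subtracting $s$ from every entry, is a parking function $\alpha_2$ of size $n-s$. Conversely, choosing which $s-1$ of the positions $1,\dots,n-1$ host the first group ($\binom{n-1}{s-1}$ ways), together with an arbitrary pair $(\alpha_1,\alpha_2)\in\PF^A_{s-1}\times\PF^A_{n-s}$ and an arbitrary preference $a_n\in\{1,\dots,s\}$, reconstructs a unique $\alpha\in\PF^A_n$ with $p_n=s$: the two groups never interfere (a car preferring a space $<s$ parks before reaching $s$, a car preferring a space $>s$ never tries $s$), so $C_n$ finds $1,\dots,s-1$ filled and $s$ free and parks at $s$.

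It then remains to track the statistic. Luckiness is preserved under relabelling and under shifting all entries by a constant, so $\reluc(\alpha)=\reluc(\alpha_1)+\reluc(\alpha_2)+\mathbf{1}[a_n<s]$; summing $q^{\reluc}$ over the $s$ admissible preferences $a_n\in\{1,\dots,s\}$ (one lucky choice $a_n=s$ and $s-1$ reluctant choices) produces the factor $1+(s-1)q$. Assembling over all $s$ yields
\[
A_n(q)=\sum_{s=1}^{n}\bigl(1+(s-1)q\bigr)\binom{n-1}{s-1}A_{s-1}(q)A_{n-s}(q),
\]
and the stated form follows by renaming the summation index via $k=n+1-s$: this turns $1+(s-1)q$ into $1+(n-k)q$ and $\binom{n-1}{s-1}$ into $\binom{n-1}{k-1}$, while swapping the two factors to $A_{k-1}(q)A_{n-k}(q)$.

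The main obstacle is the bookkeeping in the middle paragraph: one must verify that the two groups really are \emph{independent} parking functions (neither group's parking process is disturbed by the other nor by $C_n$), that every admissible $a_n\in\{1,\dots,s\}$ is realizable and yields precisely $p_n=s$, and that the reluctant count splits additively with exactly the claimed $1+(s-1)q$ weight attached to $C_n$. Once this product decomposition is pinned down, the closing palindromic reindexing is routine.
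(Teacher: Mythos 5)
Your proof is correct, but it runs on the opposite side of the bijection from the paper's. The paper proves the recurrence entirely within $\NC(\mathfrak{S}_{n+1},c)$: it classifies the last reflection $t_n$ of the factorization by the sets $T_k=\{(j,j+k)\}$, invokes Lemma \ref{lem:A-edge-classes} to see that each $T_k$ contributes one good and $n-k$ bad choices (hence the factor $1+(n-k)q$), and observes that $t_n$ splits the long cycle into an $(n+1-k)$-cycle and a $k$-cycle whose sub-lattices carry $A_{n-k}(q)$ and $A_{k-1}(q)$, with $\binom{n-1}{k-1}$ interleavings of the remaining reflections. You instead decompose a parking function at the space $s=p_n$ of the last car, using Lemma \ref{lem:pn=k} to split the first $n-1$ cars into independent parking functions of sizes $s-1$ and $n-s$, and weight the $s$ admissible preferences of $C_n$ by $1+(s-1)q$; the reindexing $k=n+1-s$ then recovers the stated form. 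This is the classical ``last car'' decomposition, and under $\varphi_A$ (Algorithm A determines $t_n$ precisely from $a_n$ and $p_n$, cutting the cycle at $p_n$) it is the mirror image of the paper's argument, as you note parenthetically. Your checks are all sound: $a_j\neq s$ for $j<n$, the two groups do not interfere, $a_n>s$ is inadmissible since spaces $s+1,\dots,n$ are full, and reluctance is additive under relabelling. What each approach buys: the paper's version needs no input beyond its own edge classification and directly establishes the recurrence for $M(\mathfrak{S}_{n+1},q)$ as defined by bad edges, whereas yours is more elementary but leans on the identity \eqref{eqn:PF^A-generating-poly} (equivalently on $\varphi_A$, already established at that point in Section 2) to transfer the statistic $\reluc$ to $\bad$; given that dependency, both proofs carry the same combinatorial content.
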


\begin{proof}  We enumerate the factorization $t_1t_2\cdots t_n$ of $c$ into transpositions inductively, which corresponds to a maximal chain in $\NC(\mathfrak{S}_{n+1},c)$.

Consider a family $T_1,\dots,T_n$ of sets of transpositions given by
\begin{equation}
T_k:=\{(j, j+k) : 1\le j\le n+1-k\}.
\end{equation}
The transposition $t_n$ is in one of the sets $T_1,\dots,T_n$, say $t_n\in T_k$. There are $n+1-k$ choices for $t_n$ (i.e., $|T_k|=n+1-k$).
By Lemma \ref{lem:A-edge-classes}, we observe that $(\omega_{n-1},\omega_n)$ is a bad edge if $t_n=(j,j+k)$ for each $j\in [n-k]$, and a good edge if $t_n=(n+1-k, n+1)$. Moreover, $\omega_n$ is split by $t_n$ into an $(n+1-k)$-cycle $\delta_1$ and a $k$-cycle $\delta_2$, and among  $t_1,t_2,\dots,t_{n-1}$ there are $n-k$ ($k-1$, respectively) transpositions which are comparable with $\delta_1$ ($\delta_2$, respectively).
The generating $q$-polynomial for the maximal chains in the $\delta_1$-noncrossing partition lattice ($\delta_2$-noncrossing partition lattice, respectively) is $A_{n-k}(q)$ ($A_{k-1}(q)$, respectively). The result follows.
\end{proof}

\section{Noncrossing partitions and parking functions of type $B$}
\subsection{Noncrossing partitions of type $B$}
A set partition $\PP$ of $[\pm n]$ is \emph{invariant} under sign change if $P$ is a block of $\PP$ then $-P$ is also a block of $\PP$, where $-P=\{-x: x\in P\}$. We identify $\PP$ with a partition of $[2n]$ by sending $i\mapsto i$ for $i\in\{1,2,\dots,n\}$ and $i\mapsto n-i$ for $i\in\{-1,-2,\dots,-n\}$.
As introduced by Reiner \cite{Reiner}, the lattice $\NC^B_n$ of noncrossing partitions of type $B$ is defined to be the sublattice of $\NC_{2n}$ consisting of partitions which are invariant under sign change. An invariant block $P=-P$ of a partition is called \emph{zero block}. By the noncrossing condition, there is at most one zero block in $\PP$ and the nonzero blocks come in pairs.

Consider the group $B_n$ of signed permutations. Every permutation $\omega\in B_n$ maps to the partition of $[\pm n]$ given by the cycle structure of $\omega$. With respect to the Coxeter element 
\begin{equation*}
c=(1, 2, \dots, n, -1, -2, \dots, -n),
\end{equation*}
this map defines a poset isomorphism from $\NC(B_n,c)$ to $\NC^B_n$ (see \cite{Bessis}, \cite{BW-02}, \cite{BW}).
In one-line notation, we write $\omega=\omega(1)\omega(2)\cdots\omega(n)$ with bars to indicate negative numbers. We also write $\omega$ in disjoint cycles. For example, if $\omega$ is determined by $\omega(1)=-3$, $\omega(2)=-2$, $\omega(3)=5$, $\omega(4)=4$, $\omega(5)=6$ and $\omega(6)=-1$, we write
\begin{equation*}
\omega= \overline{3}\,\overline{2}\,5\,4\,6\,\overline{1}=(1,-3,-5,-6)(-1,3,5,6)(2,-2)(4)(-4).
\end{equation*}
Note that $\omega$ maps to $\{\{1,-3,-5,-6\}$, $\{-1,3,5,6\}$, $\{2,-2\}$, $\{4\},\{-4\}\}$, which is a noncrossing partition of type $B$ with zero block $\{2,-2\}$. 

The reflections of $B_n$ are signed permutations of the following three kinds \cite[Proposition~8.1.5]{BB}:
\begin{equation} \label{eqn:B-reflections}
\begin{aligned}
(i,-i) &\quad\mbox{for all $1\le i\le n$,} \\
(i,j)(-i,-j) &\quad \mbox{for all $1\le i<j\le n$,}\\
(i,-j)(-i,j) &\quad \mbox{for all $1\le i<j\le n$.}
\end{aligned}
\end{equation}
A maximal chain $\ww: e=\omega_0<\omega_1<\cdots<\omega_n=c$ in $\NC(B_n,c)$ corresponds to a factorization $t_1t_2\cdots t_n$ of $c$ into $n$ reflections, where $t_i=\omega^{-1}_{i-1}\omega_i$. 

For any $\omega\in\NC(B_n,c)$, the cycle structure of $\omega$ consists of at most one cycle of the form $\delta_0=(d_1,\dots,d_k,-d_1,\dots,-d_k)$ and some pairs of the form
\begin{equation} \label{eqn:nonzero}
\delta_1=(d_1,\dots,d_{\ell},-d_{\ell+1},\dots,-d_k)(-d_1,\dots,-d_{\ell},d_{\ell+1},\dots,d_k),
\end{equation}
where $0<d_1<\cdots<d_k$ and $1\le \ell\le k$.
Following \cite{BW-02}, we call $\delta_0$ ($\delta_1$, respectively) a \emph{balanced cycle} (\emph{paired cycle}, respectively). Note that a reflection $t$ satisfies $t\leq\delta_0$ if $t=(i,-i)$ or $(i,j)(-i,-j)$ for some $i,j\in\{\pm d_1,\dots,\pm d_k\}$, and $t\leq\delta_1$ if $t=(i,j)(-i,-j)$ for some $i,j\in \{d_1,\dots,d_{\ell}, -d_{\ell+1},\dots,-d_k\}$.  
Let $\npc(\omega)$ denote the number of paired cycles of $\omega$. Note that 
\begin{equation} \label{eqn:n-nzp}
\ell_T(\omega)=n-\npc(\omega).
\end{equation}

We recall some facts about signed permutations; see \cite[Section~8.1]{BB} for details. The inversion number of $\omega$, denoted $\inv_B(\omega)$, can be calculated by
\begin{equation}
\inv_B(\omega):=\inv(\omega(1)\cdots\omega(n))+\sum_{\{j\in [n]\,:\,\omega(j)<0\}} |\omega(j)|.
\end{equation}
For example, if $\omega=\overline{3}\,\overline{2}\,5\,4\,6\,\overline{1}$, then $\inv_B(\omega)=4+6=10$. It is known \cite[Proposition~8.1.1]{BB} that
\begin{equation}
\ell_S(\omega)=\inv_B(\omega).
\end{equation}

The following results come from \cite[Proposition~8.1.6]{BB}, which will be used in Propositions \ref{pro:zero-cycle} and \ref{pro:nonzero-cycle} to distinguish between good and bad edges in $\NC(B_n,c)$. 

\medskip
\begin{lem} \label{lem:inv_B(u)-inv_B(v)}
For any edge $(\mu,\nu)$ of $\NC(B_n,c)$, the following results hold.
\begin{enumerate}
\item If $\mu^{-1}\nu=(i,-i)$, where $1\le i\le n$, we have $\inv_B(\mu)>\inv_B(\nu)$ if and only if $\nu(i)>0$.
\item If $\mu^{-1}\nu=(i,j)(-i,-j)$, where $1\le i<j\le n$, we have $\inv_B(\mu)>\inv_B(\nu)$ if and only if $\nu(i)<\nu(j)$ or,  equivalently, $\nu(-j)<\nu(-i)$.
\item If $\mu^{-1}\nu=(i,-j)(-i,j)$, where $1\le i<j\le n$, we have $\inv_B(\mu)>\inv_B(\nu)$ if and only if $\nu(-i)<\nu(j)$ or,  equivalently, $\nu(-j)<\nu(i)$.
\end{enumerate}
\end{lem}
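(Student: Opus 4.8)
The plan is to exploit that an edge $(\mu,\nu)$ means $\mu=\nu t$ with $t=\mu^{-1}\nu$ one of the three reflections in \eqref{eqn:B-reflections}, so that $\mu(x)=\nu(t(x))$ and the window $\nu(1)\cdots\nu(n)$ is altered in a controlled way: the reflection $(i,-i)$ flips the sign of the single entry in position $i$; the reflection $(i,j)(-i,-j)$ transposes the entries in positions $i$ and $j$; and the reflection $(i,-j)(-i,j)$ transposes the entries in positions $i$ and $j$ and negates both. In each case I would read off $\inv_B(\mu)-\inv_B(\nu)$ from $\inv_B(\omega)=\inv(\omega(1)\cdots\omega(n))+\sum_{\{k\,:\,\omega(k)<0\}}|\omega(k)|$, handling the ordinary-inversion term and the negative-sum term separately. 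The recurring tool is that $|\nu(1)|,\dots,|\nu(n)|$ is a permutation of $[n]$, so exactly $|\nu(i)|-1$ entries of $\nu$ have absolute value below $|\nu(i)|$.

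Case (ii) is the easiest: transposing two entries leaves the set of negative values untouched, so the negative-sum term is unchanged and $\inv_B(\mu)-\inv_B(\nu)$ reduces to the change in ordinary inversions under a transposition of positions $i<j$. This is precisely the computation of Lemma~\ref{lem:inv(u)-inv(v)}, yielding $\inv_B(\mu)>\inv_B(\nu)$ exactly when $\nu(i)<\nu(j)$; the equivalent form $\nu(-j)<\nu(-i)$ is immediate from $\nu(-k)=-\nu(k)$. For case (i), only the entry $a:=\nu(i)$ is affected, becoming $-a$; the negative-sum term then changes by exactly $a$, while the inversion term changes by $p-q$, where $p$ and $q$ count the positions $k<i$ and $k>i$, respectively, with $|\nu(k)|<|a|$. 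Since $p+q=|a|-1$, the two contributions telescope to $2p+1>0$ when $a>0$ and to $-(2p+1)<0$ when $a<0$, which is the stated criterion $\inv_B(\mu)>\inv_B(\nu)\iff\nu(i)>0$.

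Case (iii) is where I expect the real difficulty, since the simultaneous transposition and negation moves both terms. Writing $a=\nu(i)$ and $b=\nu(j)$, the entries in positions $i,j$ become $-b$ and $-a$; a direct check shows the negative-sum term changes by $a+b$, and the pair $(i,j)$ itself contributes nothing to the inversion change (its status is $[a>b]$ both before and after). The delicate part is to track the inversion changes coming from every third position $k$, which forces a split according to whether $k<i$, $i<k<j$, or $k>j$, and according to the sign of $\nu(k)$. The goal is to show that, once combined with the $a+b$ from the negative-sum term, the whole bookkeeping collapses to $\inv_B(\mu)>\inv_B(\nu)\iff -a<b$, that is $\nu(-i)<\nu(j)$ (equivalently $\nu(-j)<\nu(i)$). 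Assembling the subcases with the correct overall sign is the one genuinely fiddly point.

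Finally, I would note a uniform shortcut that sidesteps the case analysis entirely. Letting $B_n$ act on $\mathbb{R}^n$ by $\nu\cdot e_k=e_{\nu(k)}$ (with $e_{-m}:=-e_m$), and taking the positive system whose simple roots are $e_1$ and $e_{k+1}-e_k$, the three reflections above are $s_{e_i}$, $s_{e_j-e_i}$, and $s_{e_i+e_j}$. The standard length criterion $\ell_S(\nu t)>\ell_S(\nu)\iff\nu(\beta_t)$ is a positive root then gives all three equivalences at once, since $\nu(e_i)=e_{\nu(i)}$, $\nu(e_j-e_i)=e_{\nu(j)}-e_{\nu(i)}$, and $\nu(e_i+e_j)=e_{\nu(i)}+e_{\nu(j)}$ are positive exactly when $\nu(i)>0$, $\nu(i)<\nu(j)$, and $\nu(i)+\nu(j)>0$, respectively. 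These are precisely the contents of \cite[Proposition~8.1.6]{BB}, which one may simply invoke.
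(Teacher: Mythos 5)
Your proposal is correct, and its closing paragraph is in fact precisely the paper's own treatment: the paper gives no computational proof of this lemma at all, introducing it with ``the following results come from \cite[Proposition~8.1.6]{BB}.'' That cited result is exactly the uniform root-theoretic criterion you state, $\ell_S(\nu t)>\ell_S(\nu)$ if and only if $\nu(\beta_t)$ is a positive root, and your matching of the three reflection types to the roots $e_i$, $e_j-e_i$, $e_i+e_j$, together with the positivity conditions $\nu(i)>0$, $\nu(i)<\nu(j)$, $\nu(i)+\nu(j)>0$ (the last being equivalent to $\nu(-i)<\nu(j)$, i.e.\ $\nu(-j)<\nu(i)$), is accurate; combined with $\ell_S=\inv_B$ and $\mu=\nu t$, this yields all three parts at once. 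Your hands-on computations are a genuine extra relative to the paper and the parts you carry out are sound: in case (ii) the transposition preserves the multiset of negative window entries, so the negative-sum term is fixed and the argument of Lemma~\ref{lem:inv(u)-inv(v)} applies verbatim to the window (its proof never uses that the values form $[n+1]$, only that they are distinct); in case (i) your bookkeeping is right --- with $p+q=|a|-1$ the total change $a+p-q$ equals $2p+1$ when $a>0$ and $-(2p+1)$ when $a<0$. The one incomplete piece is the direct verification of case (iii), which you explicitly leave as a sketch with only the boundary observations (the negative-sum change $a+b$ and the invariance of the $(i,j)$ pair) established; as a self-contained computation it would still require the three-way split over $k<i$, $i<k<j$, $k>j$ to be executed. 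But this gap is immaterial to the verdict, since your final paragraph already supplies a complete proof by the same citation the paper itself relies on.
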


The following results characterize the good and bad edges whose reflections are comparable with a balanced cycle.

\medskip
\begin{pro} \label{pro:zero-cycle} Suppose $(\mu,\nu)$ is an edge in $\NC(B_n,c)$ with  reflection $t=\mu^{-1}\nu\leq\delta$, where $\delta=(d_1,\dots,d_m,-d_1,\dots,-d_m)$ is the balanced cycle of $\nu$, for some integers  $0<d_1<\cdots<d_m$. Then the following results hold.
\begin{enumerate}
\item For $t=(i,-i)$, where $i>0$, $(\mu,\nu)$ is a bad edge if $i\in\{d_1,\dots,d_{m-1}\}$, and a good edge if $i=d_m$.
\item For $t=(i,j)(-i,-j)$, where $0<i<j$,   $(\mu,\nu)$ is a bad edge if $i,j\in\{d_1,\dots,d_{m-1}\}$, and a good edge if $i\in\{d_1,\dots,d_{m-1}\}$ and $j=d_m$.
\item For $t=(i,-j)(-i,j)$, where $0<i<j$, $(\mu,\nu)$ is a bad edge for all $i,j\in\{d_1,\dots,d_m\}$.
\end{enumerate}
\end{pro}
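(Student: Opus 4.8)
The plan is to reduce each of the three parts to Lemma \ref{lem:inv_B(u)-inv_B(v)}, combined with a direct reading of how $\nu$ acts on its balanced cycle $\delta$. Recall that by definition $(\mu,\nu)$ is a bad edge precisely when $\inv_B(\mu)>\inv_B(\nu)$ and a good edge when $\inv_B(\mu)<\inv_B(\nu)$. Since each reflection $t\leq\delta$ is of exactly one of the three shapes $(i,-i)$, $(i,j)(-i,-j)$, $(i,-j)(-i,j)$, the corresponding part of the lemma will in each case turn the good/bad determination into a comparison of two specific values of $\nu$, and those values are forced by the cycle structure of $\delta$.

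First I would record the action of $\nu$ explicitly. From $\delta=(d_1,\dots,d_m,-d_1,\dots,-d_m)$ we read off $\nu(d_r)=d_{r+1}$ and $\nu(-d_r)=-d_{r+1}$ for $1\le r\le m-1$, while $\nu(d_m)=-d_1$ and $\nu(-d_m)=d_1$. The single fact driving everything is the resulting sign pattern: $\nu(d_r)>0$ exactly when $r<m$, and $\nu(-d_r)<0$ exactly when $r<m$; only the top entry $d_m$ changes sign under $\nu$.

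With this in hand the three parts become a short case analysis. For part (i), writing $i=d_r$, Lemma \ref{lem:inv_B(u)-inv_B(v)}(i) says $(\mu,\nu)$ is bad iff $\nu(i)>0$, which by the sign pattern holds iff $r<m$; hence bad for $i\in\{d_1,\dots,d_{m-1}\}$ and good for $i=d_m$. For part (ii), set $i=d_s$, $j=d_r$ with $s<r$; Lemma \ref{lem:inv_B(u)-inv_B(v)}(ii) asks whether $\nu(d_s)<\nu(d_r)$. Here $\nu(d_s)=d_{s+1}>0$ since $s<m$, so when $r<m$ both values are positive and $d_{s+1}<d_{r+1}$ forces bad, whereas when $r=m$ we get $\nu(d_r)=-d_1<0<\nu(d_s)$, giving good; this yields exactly the dichotomy in (ii). For part (iii), with $i=d_s$, $j=d_r$ and $s<r$, Lemma \ref{lem:inv_B(u)-inv_B(v)}(iii) compares $\nu(-d_s)$ and $\nu(d_r)$. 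Since $s<m$ we have $\nu(-d_s)=-d_{s+1}<0$, so the case $r<m$ (where $\nu(d_r)>0$) is immediate, and when $r=m$ both $\nu(-d_s)=-d_{s+1}$ and $\nu(d_m)=-d_1$ are negative with $-d_{s+1}<-d_1$; the edge is therefore bad for all admissible $i,j$, matching (iii).

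I do not expect a genuine obstacle here, as the argument is essentially sign bookkeeping once the lemma is invoked. The one step requiring care is part (iii) in the subcase $r=m$, where both compared values are negative and one must verify the \emph{strict} inequality $-d_{s+1}<-d_1$, i.e.\ $d_{s+1}>d_1$; this rests on $s\ge1$, so that $s+1\ge2$ and hence $d_{s+1}>d_1$ because the $d_i$ are strictly increasing. This is precisely why a reflection of the third shape always produces a bad edge, in contrast to the first two shapes, where the top element $d_m$ is what creates the unique good edge.
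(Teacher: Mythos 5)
Your proof is correct and follows essentially the same route as the paper's: both reduce each case to Lemma \ref{lem:inv_B(u)-inv_B(v)} and read off the required comparisons from the explicit action $\nu(d_r)=d_{r+1}$, $\nu(-d_r)=-d_{r+1}$ for $r<m$ and $\nu(d_m)=-d_1$. Your treatment of part (iii) is in fact slightly more careful than the paper's, which asserts $\nu(-i)<\nu(j)$ without spelling out the subcase $j=d_m$ where the strict inequality $-d_{s+1}<-d_1$ is needed.
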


\begin{proof} (i) Note that $\nu(d_m)=-d_1$ and $\nu(d_r)=d_{r+1}$ for each $r\in [m-1]$.
For $t=(i,-i)$ and $i>0$, if $\nu(i)>0$ then by Lemma \ref{lem:inv_B(u)-inv_B(v)}(i) we have $\inv_B(\mu)>\inv_B(\nu)$ and hence $(\mu,\nu)$ is a bad edge. In this case, $i\in\{d_1,\dots,d_{m-1}\}$. Otherwise,  $\nu(i)<0$. In this case, $i=d_m$ and $(\mu,\nu)$ is a good edge.

(ii) For $t=(i,j)(-i,-j)$ and $0<i<j$, if $\nu(i)<\nu(j)$ then by Lemma \ref{lem:inv_B(u)-inv_B(v)}(ii) we have $\inv_B(\mu)>\inv_B(\nu)$ and hence $(\mu,\nu)$ is a bad edge. In this case, $i,j\in\{d_1,\dots,d_{m-1}\}$. Otherwise, $\nu(i)>\nu(j)$. In this case, $i\in\{d_1,\dots,d_{m-1}\}$, $j=d_m$, and $(\mu,\nu)$ is a good edge.

(iii) For $t=(i,-j)(-i,j)$ and $0<i<j$, note that $\nu(-i)<\nu(j)$ for all $i,j\in\{d_1,\dots,d_m\}$ since $\nu(d_m)=-d_1$ and $\nu(-d_r)=-d_{r+1}$ for each $r\in [m-1]$. By Lemma \ref{lem:inv_B(u)-inv_B(v)}(iii), $\inv_B(\mu)>\inv_B(\nu)$ and hence $(\mu,\nu)$ is a bad edge.
\end{proof}

\medskip
By Proposition \ref{pro:zero-cycle}, there are $m^2$ edges $(\mu,\nu)$ with reflection $\mu^{-1}\nu\leq\delta$, where $\delta=(d_1,\dots,d_m$, $-d_1,\dots,-d_m)$ is the balanced cycle  of $\nu$. Among them, $m$ edges are good. For convenience, we write $\delta=(d_1,\dots,d_{2m})$ and use the notation $d_{m+i}$ ($d_{2m+i}$, respectively) to stand for $-d_i$ ($d_i$, respectively) for all $i\in [m]$ when working with congruence of indices modulo $2m$. 

\begin{lem} \label{lem:zero-cycle-edge-classes} 
Suppose $\delta=(d_1,\dots,d_{2m})$ is the balanced cycle of an element $\nu\in \NC(B_n,c)$,  where $0<d_1<\cdots<d_{m}$ and $d_{m+i}=-d_i$ for all $i\in [m]$. Let $E_1,\dots,E_m$ be a partition of the edges $(\mu,\nu)$ with $\mu^{-1}\nu\leq\delta$, defined by
\begin{equation*}
E_j := \begin{cases}
\{(\mu,\nu) : \mu^{-1}\nu=(d_i,d_{j+i})(d_{m+i},d_{m+j+i}), 1\le i\le m\} &\text{if $1\le j\le m-1$;} \\
\{(\mu,\nu) : \mu^{-1}\nu=(d_i,d_{m+i}), 1\le i\le m\} &\text{if $j=m$.}
\end{cases}
\end{equation*}
Then for $1\le j\le m$ the following properties hold.
\begin{enumerate}
\item $|E_j|=m$.
\item Each $E_j$ contains exactly one good edge.
\item For any $(\mu,\nu)\in E_j$, the reflection $\mu^{-1}\nu$ splits the cycle $\delta$ into a balanced cycle of size $2(m-j)$ and a paired cycle of size $j$.
\end{enumerate}
\end{lem}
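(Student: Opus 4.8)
The plan is to follow the template of Lemma~\ref{lem:A-edge-classes}: parts (i) and (ii) are bookkeeping combined with Proposition~\ref{pro:zero-cycle}, while the substantive work is the cycle-splitting in part (iii). Throughout I would identify the support $\{d_1,\dots,d_{2m}\}$ of $\delta$ with the indices modulo $2m$ via $s\mapsto d_s$, so that $d_{s+m}=-d_s$ for all $s$, and use that $\nu$ acts on this support by $d_s\mapsto d_{s+1}$. For an edge $(\mu,\nu)\in E_j$ one has $\mu=\nu t$, where $t=\mu^{-1}\nu$ is an involution.

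For part (i), I would observe that for $1\le j\le m-1$ the reflection attached to $i$ is $(d_i,d_{j+i})(d_{m+i},d_{m+j+i})=(d_i,d_{j+i})(-d_i,-d_{j+i})$, hence is determined by the unordered pair $\{d_i,d_{j+i}\}$ up to simultaneous negation. Two indices $i\neq i'$ give the same reflection only if $\{i,j+i\}$ equals $\{i',j+i'\}$ or $\{m+i',m+j+i'\}$ modulo $2m$; expanding each case forces $2j\equiv 0\pmod{2m}$, impossible for $1\le j\le m-1$. Thus the $m$ reflections are distinct and $|E_j|=m$, and for $j=m$ the reflections $(d_i,-d_i)$ are visibly distinct, so $|E_m|=m$; since $\sum_j|E_j|=m^2$ equals the total number of reflections below $\delta$, the $E_j$ partition these edges. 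For part (ii) I would split on the sign of $d_{j+i}$ (note $d_i>0$ always, as $i\in[m]$). When $j+i\le m$ the reflection is $(d_i,d_{j+i})(-d_i,-d_{j+i})$ with $0<d_i<d_{j+i}\le d_m$, so Proposition~\ref{pro:zero-cycle}(ii) makes it good exactly when $d_{j+i}=d_m$, i.e. $i=m-j$; when $j+i>m$ one has $d_{j+i}=-d_{j+i-m}$, giving the type-$3$ form $(d_i,-d_{j+i-m})(-d_i,d_{j+i-m})$, which is bad by Proposition~\ref{pro:zero-cycle}(iii). Hence $E_j$ has the single good edge $i=m-j$ for $1\le j\le m-1$, while for $E_m$ Proposition~\ref{pro:zero-cycle}(i) gives that $(d_i,-d_i)$ is good iff $d_i=d_m$, again a unique good edge.

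For part (iii) I would compute the cycle structure of $\mu=\nu t$ on the support of $\delta$ directly. As a permutation of the indices modulo $2m$, $\mu$ acts by $s\mapsto s+1$ except for the four shortcuts $i\mapsto j+i+1$, $j+i\mapsto i+1$, $m+i\mapsto m+j+i+1$, and $m+j+i\mapsto m+i+1$. The cyclic interval $I_1=\{i+1,\dots,j+i\}$ is then closed under $\mu$ and forms a $j$-cycle, as does $I_2=I_1+m$; since $|I_1|=j\le m-1<m$, the interval $I_1$ contains no antipodal pair, so $I_1\cap I_2=\emptyset$, and $I_2=I_1+m$ means $(d_{i+1},\dots,d_{j+i})$ together with its negative constitute a paired cycle of size $j$. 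The complement $\ZZ/2m\ZZ\setminus(I_1\cup I_2)$ consists of two arcs $A$ and $B=A+m$, each of length $m-j$, and the remaining two shortcuts splice them into the single cycle $(A,B)=(A,-A)$, a balanced cycle of size $2(m-j)$. For $j=m$ the reflection $(d_i,-d_i)$ swaps the antipodal pair $\{i,m+i\}$ and merges the two halves of $\delta$ into one paired cycle of size $m$ with empty balanced part, matching $2(m-j)=0$.

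I expect the main obstacle to be the index bookkeeping modulo $2m$ in part (iii): one must verify the disjointness of $I_1$ and $I_2$ and the negation symmetry $B=A+m$ uniformly in $i$, including the wrap-around cases $j+i>m$ (precisely the type-$3$ reflections), where the relevant arcs straddle the point $2m\equiv 0$. The clean enabling fact is that $|I_1|=j$ is strictly less than $m$, which prevents a cyclic interval from meeting its own antipode and thereby guarantees the balanced-plus-paired splitting for every $i$.
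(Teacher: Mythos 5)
Your proposal is correct and follows essentially the same route as the paper: part (ii) by reducing to Proposition \ref{pro:zero-cycle} (identifying the unique good edge at $i=m-j$, i.e.\ $\mu^{-1}\nu=(d_{m-j},d_m)(d_{2m-j},d_{2m})$, resp.\ $(d_m,d_{2m})$ in $E_m$), and part (iii) by the same explicit cycle-splitting computation, which the paper simply records by writing out the balanced cycle $(d_1,\dots,d_i,d_{i+j+1},\dots,d_{m+i},d_{m+i+j+1},\dots,d_{2m})$ and the paired cycle $(d_{i+1},\dots,d_{i+j})(d_{m+i+1},\dots,d_{m+i+j})$. You are in fact more careful than the paper on part (i) (which it dismisses as clear); the only loose ends are trivial --- your counting argument for the partition also needs disjointness \emph{across} classes (immediate, since pairs in $E_j$ have cyclic difference $\pm j$ and $j\neq j', 2m-j'$ for distinct $j,j'\in[m-1]$), and for $j=m$ the reflection \emph{splits} $\delta$ into, rather than ``merges'' it into, the paired cycle.
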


\begin{proof}
(i) That $|E_j|=m$ is clear. (ii) By Proposition \ref{pro:zero-cycle}, the edge $(\mu,\nu)$ with $\mu^{-1}\nu=(d_m,d_{2m})$ is the only good edge in $E_m$, and the edge with $\mu^{-1}\nu=(d_{m-j},d_m)(d_{2m-j},d_{2m})$ is the only good edge in $E_j$ for $1\le j\le m-1$. 

(iii) For any edge $(\mu,\nu)\in E_m$, say $\mu^{-1}\nu=(d_i,d_{m+i})$, the cycle $\delta$ is split by $\mu^{-1}\nu$ into a paired cycle $(d_1,\dots,d_i,d_{m+i+1},\dots,d_{2m})(d_{m+1},\dots,d_{m+i}$, $d_{i+1},\dots,d_m)$ of size $m$.
For $1\le j\le m-1$ and any edge $(\mu,\nu)\in E_j$, say $\mu^{-1}\nu=(d_i,d_{j+i})(d_{m+i},d_{m+j+i})$, the cycle $\delta$ is split by $\mu^{-1}\nu$ into a balanced cycle $(d_1,\dots,d_i,d_{i+j+1},\dots,d_{m+i},d_{m+i+j+1},\dots,d_{2m})$ and a paired cycle $(d_{i+1},\dots,d_{i+j})(d_{m+i+1},\dots,d_{m+i+j})$. The result follows.
\end{proof}

\medskip
\begin{exa} \label{exa:zero-edges} {\rm
Suppose $\delta=(1,2,7,8,-1,-2,-7,-8)$ is the balanced cycle of $\nu$ and $(\mu,\nu)$ is an edge in $\NC(B_8,c)$ with $\mu^{-1}\nu\leq\delta$. 
Note that $(\mu,\nu)$ is a good edge if $\mu^{-1}\nu\in\{(8,-8)$, $(1,8)(-1,-8)$, $(2,8)(-2,-8), (7,8)(-7,-8)\}$. Moreover, the reflections comparable with $\delta$ can be partitioned into the following sets such that each set contains one reflection corresponding to a good edge, i.e., 

\begin{align*}
 &\{(1,2)(-1,-2),(2,7)(-2,-7),(7,8)(-7,-8),(1,-8)(-1,8)\}; \\
 &\{(1,7)(-1,-7),(2,8)(-2,-8),(1,-7)(-1,7),(2,-8)(-2,8)\}; \\
 &\{(1,8)(-1,-8),(1,-2)(-1,2), (2,-7)(-2,7), (7,-8)(-7,8)\}; \\
 &\{(1,-1),(2,-2),(7,-7),(8,-8)\}.\\
\end{align*}
}
\end{exa}

\medskip
The following results characterize the good and bad edges in $\NC(B_n,c)$ whose reflections are comparable with a paired cycle.

\medskip
\begin{pro} \label{pro:nonzero-cycle} Suppose $(\mu,\nu)$ is an edge in $\NC(B_n,c)$ with  reflection $t=\mu^{-1}\nu\leq\delta$, where 
$\delta=(d_1,\dots,d_{\ell},-d_{\ell+1},\dots,-d_k)(-d_1,\dots,-d_{\ell},d_{\ell+1},\dots,d_k)$
is a paired cycle of $\nu$, for some integers $0<d_1<\cdots<d_k$ and $1\le\ell\le k$. Write $\delta=(e_1,\dots,e_k)(-e_1,\dots,-e_k)$, where 
\begin{equation*}
e_r=\begin{cases}
d_r &\mbox{if $1\le r\le \ell$;} \\
-d_r &\mbox{if $\ell+1\le r\le k$.}
\end{cases}
\end{equation*}
Then $t=(e_i,e_j)(-e_i,-e_j)$ for some $1\le i<j\le k$. Moreover, $(\mu,\nu)$ is a good edge if $i$ and $j$ satisfy one of the following conditions; otherwise, $(\mu,\nu)$ is a bad edge:
\begin{enumerate}
\item $1\le i<\ell$ and $j=\ell$;
\item $\ell\le i<k$ and $j=k$.
\end{enumerate}
\end{pro}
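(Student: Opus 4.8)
The plan is to translate the good/bad dichotomy into the inversion-number criteria of Lemma~\ref{lem:inv_B(u)-inv_B(v)} and then carry out a careful sign bookkeeping on the cyclic action of $\nu$. The assertion that $t=(e_i,e_j)(-e_i,-e_j)$ for some $1\le i<j\le k$ is immediate from the description of reflections below (\ref{eqn:nonzero}): a reflection $t\le\delta$ with $\delta$ a paired cycle has the form $(a,b)(-a,-b)$ with $a,b\in\{d_1,\dots,d_\ell,-d_{\ell+1},\dots,-d_k\}=\{e_1,\dots,e_k\}$, so writing $a=e_i$, $b=e_j$ and ordering the two indices gives $i<j$.

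Next I would record two structural facts. First, the action of $\nu$ on the paired cycle: $\nu(e_r)=e_{r+1}$ and $\nu(-e_r)=-e_{r+1}$ for $1\le r\le k-1$, while $\nu(e_k)=e_1$ and $\nu(-e_k)=-e_1$. Second, the sign pattern: $e_1<\cdots<e_\ell$ are the positive entries $d_1,\dots,d_\ell$, whereas $e_{\ell+1},\dots,e_k$ are the negative entries $-d_{\ell+1},\dots,-d_k$. In particular the positive entries carry the smaller indices, so for $i<j$ the pattern $e_i<0<e_j$ cannot occur and only three sign patterns survive. Throughout, an edge is good precisely when it fails the ``bad'' inequality supplied by Lemma~\ref{lem:inv_B(u)-inv_B(v)}.

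The heart of the argument is the case split on the signs of $e_i,e_j$. When $1\le i<j\le\ell$ (both positive), $t=(d_i,d_j)(-d_i,-d_j)$ is a reflection of the second kind, and Lemma~\ref{lem:inv_B(u)-inv_B(v)}(ii) makes the edge bad iff $\nu(e_i)<\nu(e_j)$; since $\nu(e_i)=d_{i+1}>0$, this inequality holds for every $j<\ell$ but fails at $j=\ell$ (where $\nu(e_j)$ is negative, or equals the least entry $d_1$ if $\ell=k$), so the edge is good iff $j=\ell$, which is condition~(i). When $1\le i\le\ell<j\le k$, $t$ is a reflection of the third kind and Lemma~\ref{lem:inv_B(u)-inv_B(v)}(iii) applies to the pair $\nu(-e_i),\nu(-e_j)$; checking the sign of each of these two images (they change sign exactly as $i$ passes $\ell$ and $j$ passes $k$) shows the edge is good only when $i=\ell$ and $j=k$. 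When $\ell<i<j\le k$ (both negative), $t$ is again of the second kind and Lemma~\ref{lem:inv_B(u)-inv_B(v)}(ii) gives good iff $j=k$. Merging the last two patterns yields good iff $\ell\le i<k$ and $j=k$, i.e.\ condition~(ii), while the first pattern yields condition~(i); together these exhaust all reflections $t\le\delta$.

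I expect the main obstacle to be the sign bookkeeping at the boundary indices $i=\ell$ and $j=k$, where $\nu$ either flips the sign of the image or wraps the cyclic shift back to $e_1$; these are exactly the places where ``bad'' switches to ``good,'' so they must be tracked precisely, and one must keep each of the three reflection kinds paired with the correct clause of Lemma~\ref{lem:inv_B(u)-inv_B(v)}. A minor point to dispatch is the degenerate case $\ell=k$ (no sign change in the cycle), where only the first pattern occurs and condition~(i) with $j=\ell=k$ accounts for every good edge.
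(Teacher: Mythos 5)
Your proof is correct and follows essentially the same route as the paper's: both reduce the good/bad dichotomy to the inversion criteria of Lemma~\ref{lem:inv_B(u)-inv_B(v)}, record the cyclic action $\nu(e_r)=e_{r+1}$ (with the wrap-around and sign behavior at $r=\ell$ and $r=k$), and split on the signs of $e_i,e_j$, your three sign patterns matching the paper's Case~1 (equal signs, clause (ii) of the lemma) and Case~2 (mixed signs, clause (iii)), including the degenerate case $\ell=k$ handled as in Proposition~\ref{pro:zero-cycle}. The only compression is that in the boundary subcases $(i<\ell,\ j=k)$ and $(i=\ell,\ j<k)$ the two images $\nu(-e_i),\nu(-e_j)$ carry the \emph{same} sign, so sign-checking alone does not decide them and one must also invoke $d_1<\cdots<d_k$ --- precisely the routine bookkeeping you flagged, which the paper writes out in its four bullet subcases.
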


\begin{proof} If $\ell=k$ then $\delta=(d_1,\dots,d_{\ell})(-d_1,\dots,-d_{\ell})$, where $0<d_1<\cdots<d_{\ell}$.  By the same argument as in the proof of Proposition \ref{pro:zero-cycle}(ii), $(\mu,\nu)$ is a bad edge if $e_i,e_j\in\{d_1,\dots,d_{\ell-1}\}$, and a good edge if $e_i\in\{d_1,\dots,d_{\ell-1}\}$ and $e_j=d_{\ell}$.

Let $\ell<k$.
Note that $\nu(d_{\ell})=-d_{\ell+1}$, $\nu(d_k)=-d_1$, and $\nu(d_r)=d_{r+1}$ for each $r\neq \ell,k$. 

Case 1. If $e_ie_j>0$ then either $e_i,e_j\in\{d_1,\dots,d_{\ell}\}$ or $e_i,e_j\in\{-d_{\ell+1},\dots,-d_k\}$. Note that if $e_i\in\{d_1,\dots,d_{\ell-1}\}$ and $e_j=d_{\ell}$ then $\nu(e_i)>\nu(e_j)$ and that if $e_i\in\{-d_{\ell+1},\dots,-d_{k-1}\}$ and $e_j=-d_k$ then $\nu(e_j)>\nu(e_i)$. By Lemma \ref{lem:inv_B(u)-inv_B(v)}(ii), $\inv_B(\mu)<\inv_B(\nu)$ and hence $(\mu,\nu)$ is a good edge. Otherwise, $e_i,e_j\in\{d_1,\dots,d_{\ell-1}\}$ ($e_i,e_j\in\{-d_{\ell+1},\dots,-d_{k-1}\}$, respectively). In this case, $\nu(e_i)<\nu(e_j)$ ($\nu(e_j)<\nu(e_i)$, respectively) and hence $(\mu,\nu)$ is a bad edge.

Case 2. If $e_ie_j<0$, there are four cases:
\begin{itemize}
\item $e_i\in\{d_1,\dots,d_{\ell-1}\}$ and $e_j\in\{-d_{\ell+1},\dots,-d_{k-1}\}$. We have $\nu(e_i)=\nu(d_i)=d_{i+1}$ and $\nu(e_j)=\nu(-d_j)=-d_{j+1}$. Thus, $\nu(e_j)<\nu(e_i)$. By Lemma \ref{lem:inv_B(u)-inv_B(v)}(iii), $\inv_B(\mu)>\inv_B(\nu)$ and hence $(\mu,\nu)$ is a bad edge.
\item $e_i=d_{\ell}$ and $e_j\in\{-d_{\ell+1},\dots,-d_{k-1}\}$. We have $\nu(e_i)=\nu(d_{\ell})=-d_{\ell+1}$ and $-d_{\ell+2}\ge\nu(e_j)\ge -d_k$. Thus, $\nu(e_j)<\nu(e_i)$. By Lemma \ref{lem:inv_B(u)-inv_B(v)}(iii), $\inv_B(\mu)>\inv_B(\nu)$ and hence $(\mu,\nu)$ is a bad edge.
\item $e_i\in\{d_1,\dots,d_{\ell-1}\}$ and $e_j=-d_k$. We have $d_2\le \nu(e_i)\le d_{\ell}$ and $\nu(e_j)=\nu(-d_k)=d_1$. Thus, $\nu(e_j)<\nu(e_i)$. By Lemma \ref{lem:inv_B(u)-inv_B(v)}(iii), $\inv_B(\mu)>\inv_B(\nu)$ and hence $(\mu,\nu)$ is a bad edge.
\item $e_i=d_{\ell}$ and $e_j=-d_k$. Since $\nu(e_j)=\nu(-d_k)=d_1>-d_{\ell+1}=\nu(d_{\ell})=\nu(e_i)$, by Lemma \ref{lem:inv_B(u)-inv_B(v)}(iii), $\inv_B(\mu)<\inv_B(\nu)$ and hence $(\mu,\nu)$ is a good edge.
\end{itemize}
The result follows.
\end{proof}

\begin{exa} \label{exa:nonzero-edges} {\rm
Suppose $\delta=(2,3,-7,-8,-9)(-2,-3,7,8,9)$  is a paired cycle of $\nu$ and $(\mu,\nu)$ is an edge in $\NC(B_9,c)$ with $\mu^{-1}\nu\leq\delta$. 
Note that $(\mu,\nu)$ is a good edge if $\mu^{-1}\nu$ is one of the reflections in the set
\begin{equation*}
\{(2,3)(-2,-3), (3,-9)(-3,9), (-7,-9)(7,9), (-8,-9)(8,9)\},
\end{equation*}
and a bad edge if $\mu^{-1}\nu$ is one of the reflections in the set
\begin{equation*}
\{(2,-7)(-2,7),  (2,-8)(-2,8), (2,-9)(-2,9), (3,-7)(-3,7), (3,-8)(-3,8), (-7,-8)(7,8)\}.
\end{equation*}
}
\end{exa}

By Proposition \ref{pro:nonzero-cycle}, there are $\binom{k}{2}$ edges $(\mu,\nu)$ with reflection $\mu^{-1}\nu\leq\delta$, where $\delta=(d_1,\dots,d_{\ell}$, $-d_{\ell+1},\dots,-d_k)(-d_1,\dots,-d_{\ell},d_{\ell+1},\dots,d_k)$ is a paired cycle of $\nu$. In the following result, these edges are partitioned into $k-1$ sets with exactly one good edge in each set.

\medskip
\begin{lem} \label{lem:nonzero-cycle-edge-classes} Suppose $(\mu,\nu)$ is an edge in $\NC(B_n,c)$ with  reflection $t=\mu^{-1}\nu\leq\delta$, where 
$\delta=(d_1,\dots,d_{\ell},-d_{\ell+1},\dots,-d_k)(-d_1,\dots,-d_{\ell},d_{\ell+1},\dots,d_k)$
is a paired cycle of $\nu$, for some integers $0<d_1<\cdots<d_k$ and $1\le\ell\le k$. Write $\delta=(f_1,\dots,f_k)(-f_1,\dots,-f_k)$, where
\begin{equation} \label{eqn:f-form}
f_i=
\begin{cases}
d_{\ell}      &\mbox{if $i=1$} \\
-d_{\ell+i-1} &\mbox{if $2\le i\le k-\ell+1$} \\
d_{i-k+\ell-1} &\mbox{if $k-\ell+2\le i\le k$.} 
\end{cases}
\end{equation}
Let $F_1,\dots,F_{k-1}$ be a partition of the edges $(\mu,\nu)$ with $\mu^{-1}\nu\leq\delta$, defined by
\begin{equation} \label{eqn:F_j}
F_j:=\{(\mu,\nu) : \mu^{-1}\nu=(f_i,f_{i+j})(-f_i,-f_{i+j}), 1\le i\le k-j\}.
\end{equation}
Then for $1\le j\le k-1$ the following properties hold.
\begin{enumerate}
\item $|F_j|=k-j$.
\item Each $F_j$ contains exactly one good edge.
\item For any $(\mu,\nu)\in F_j$, the reflection $\mu^{-1}\nu$ splits $\delta$ into a paired cycle of size $k-j$ and a paired cycle of size $j$.
\end{enumerate}
\end{lem}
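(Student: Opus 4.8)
The plan is to treat this as the paired-cycle analogue of Lemma~\ref{lem:A-edge-classes}, reading everything off the relabeling $\delta=(f_1,\dots,f_k)(-f_1,\dots,-f_k)$ from (\ref{eqn:f-form}). The whole point of that relabeling is that $(f_1,\dots,f_k)$ is the cyclic rotation of $(e_1,\dots,e_k)=(d_1,\dots,d_\ell,-d_{\ell+1},\dots,-d_k)$ that begins at $e_\ell=d_\ell$; concretely $f_s=e_r$ with $r\equiv \ell-1+s\pmod{k}$, so in particular $f_1=e_\ell=d_\ell$ and $f_{k-\ell+1}=e_k=-d_k$. As a result the first cycle of $\nu$ reads $(f_1,f_2,\dots,f_k)$, so that $\nu(f_s)=f_{s+1}$ and $\nu(-f_s)=-f_{s+1}$ with indices taken cyclically modulo $k$. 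All three assertions follow from this description together with Proposition~\ref{pro:nonzero-cycle}.

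For (i) I would note that $F_j$ is indexed by $i=1,\dots,k-j$, that the $f_s$ are distinct (their absolute values run over $d_1,\dots,d_k$), so the unordered pairs $\{f_i,f_{i+j}\}$ are distinct and give distinct reflections; hence $|F_j|=k-j$. That the $F_j$ genuinely partition the $\binom{k}{2}$ edges below $\delta$ follows because every such reflection equals $(e_a,e_b)(-e_a,-e_b)$ with $a<b$ by Proposition~\ref{pro:nonzero-cycle}, and rewriting $\{e_a,e_b\}=\{f_p,f_q\}$ with $p<q$ places it in $F_{q-p}$ with $i=p$; the count is consistent since $\sum_{j=1}^{k-1}(k-j)=\binom{k}{2}$.

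For (iii), since $\nu=\mu t$ we have $\mu=\nu t$, and right-multiplying $\nu$ by $t=(f_i,f_{i+j})(-f_i,-f_{i+j})$ splits the $k$-cycle $(f_1,\dots,f_k)$ exactly as recalled for the symmetric group at the start of Section~2: into $(f_1,\dots,f_i,f_{i+j+1},\dots,f_k)$ of length $k-j$ and $(f_{i+1},\dots,f_{i+j})$ of length $j$. By the sign symmetry the partner cycle $(-f_1,\dots,-f_k)$ splits into the negations of these two. Because within any block of consecutive $f$'s the absolute values are distinct and the block never contains a value together with its negative, each of these cycles and the negation of its partner are distinct, so they assemble into two honest paired cycles, of sizes $k-j$ and $j$.

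The main work, and the step where I expect the bookkeeping to bite, is (ii): translating the good-edge characterization of Proposition~\ref{pro:nonzero-cycle} into the $f$-coordinates. There the good edges fall into two families, $t=(e_i,e_\ell)(-e_i,-e_\ell)$ with $1\le i<\ell$, and $t=(e_i,e_k)(-e_i,-e_k)$ with $\ell\le i<k$. Using $f_1=e_\ell$, $f_{k-\ell+1}=e_k$, and the cyclic shift above, the first family becomes the edges $\{f_1,f_q\}$ with $q\in\{k-\ell+2,\dots,k\}$ and the second becomes $\{f_p,f_{k-\ell+1}\}$ with $p\in\{1,\dots,k-\ell\}$. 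Reading off the gaps $q-p$, the first family realizes each value in $\{k-\ell+1,\dots,k-1\}$ exactly once and the second each value in $\{1,\dots,k-\ell\}$ exactly once, so together they exhaust $\{1,\dots,k-1\}$ bijectively; hence each $F_j$ contains exactly one good edge. The only real hazard is keeping the two index conventions straight and verifying that the two gap-ranges are complementary, which I would double-check against Example~\ref{exa:nonzero-edges}, where $k=5$ and $\ell=2$ indeed produce the unique good edges $(-8,-9)(8,9)$, $(-7,-9)(7,9)$, $(3,-9)(-3,9)$, $(2,3)(-2,-3)$ in $F_1,\dots,F_4$.
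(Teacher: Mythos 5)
Your proposal is correct and follows essentially the same route as the paper's proof: part (i) by direct count, part (iii) by the same splitting computation $(f_1,\dots,f_i,f_{i+j+1},\dots,f_k)$ and $(f_{i+1},\dots,f_{i+j})$, and part (ii) by translating Proposition \ref{pro:nonzero-cycle} into the $f$-coordinates, where your two gap families $\{f_1,f_q\}$ with $q\in\{k-\ell+2,\dots,k\}$ and $\{f_p,f_{k-\ell+1}\}$ with $p\in\{1,\dots,k-\ell\}$ are exactly the good edges $(f_1,f_{j+1})$ and $(f_{k-\ell+1-j},f_{k-\ell+1})$ the paper exhibits, just read in the opposite direction (gap per good edge rather than good edge per class). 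Your extra checks (that the $F_j$ genuinely partition the $\binom{k}{2}$ edges, and that the split pieces are paired rather than balanced cycles) are correct and slightly more careful than the paper, which asserts these points.
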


\begin{proof}
(i) That $|F_j|=k-j$ is clear. (ii) By Proposition \ref{pro:nonzero-cycle}, the edge $(\mu,\nu)\in F_j$ with
\begin{equation}
\mu^{-1}\nu=\begin{cases}
(f_{k-\ell+1-j},f_{k-\ell+1})(-f_{k-\ell+1-j},-f_{k-\ell+1}) &\mbox{if $1\le j\le k-\ell-1$}; \\
(f_1,f_{j+1})(-f_1,-f_{j+1}) &\mbox{if $k-\ell\le j\le k-1$}
\end{cases}
\end{equation}
is the only good edge in $F_j$ since $(f_{k-\ell+1-j},f_{k-\ell+1})=(-d_{k-j},-d_k)$ for $1\le j\le k-\ell-1$, $(f_1,f_{k-\ell+1})=(d_{\ell},-d_k)$, and $(f_1,f_{j+1})=(d_{\ell},d_{j-k+\ell})$ for $k-\ell+1\le j\le k-1$.

(iii) For any $(\mu,\nu)\in F_j$, say $\mu^{-1}\nu=(f_i,f_{i+j})(-f_i,-f_{i+j})$, we observe that the cycle $\delta$ is split by $\mu^{-1}\nu$ into a paired cycle $(f_1,\dots,f_i,f_{i+j+1},\dots,f_k)(-f_1,\dots,-f_i,-f_{i+j+1},\dots,-f_k)$ and a paired cycle $(f_{i+1},\dots,f_{i+j})(-f_{i+1},\dots,-f_{i+j})$.
\end{proof}

\subsection{The map $\varphi_B$}
In what follows we shall establish a bijection $\varphi_B$ between the parking functions in $\PF^B$ and the maximal chains in $\NC(B_n,c)$.

Following Lemma \ref{lem:nonzero-cycle-edge-classes}, we assign an order for the edges in each set $F_j$ in (\ref{eqn:F_j}) such that the first edge is the good edge. This will be used in the construction of the map $\varphi_B$. 

\medskip
\noindent
{\bf Procedure C.1}

By abuse of notation, for $1\le j\le k-\ell-1$, let $F_j$ denote the sequence of reflections
\begin{equation*} \label{eqn:linearing-ordering-1}
\begin{aligned}
& (-d_{k-j},-d_k)(d_{k-j},d_k), (-d_{k-j+1},d_1)(d_{k-j+1},-d_1), \dots, (d_{\ell-j-1},d_{\ell-1})(-d_{\ell-j-1},-d_{\ell-1}), \\
     &\quad (d_{\ell},-d_{\ell+j})(-d_{\ell},d_{\ell+j}),(-d_{\ell+1},-d_{\ell+j+1})(d_{\ell+1},d_{\ell+j+1}), \dots, (-d_{k-j-1},-d_{k-1})(d_{k-j-1},d_{k-1}).
\end{aligned}
\end{equation*}
Moreover, for $k-\ell\le j\le k-1$, let $F_j$ denote the sequence of reflections
\begin{equation*} \label{eqn:linearing-ordering-2}
(f_1,f_{j+1})(-f_1,-f_{j+1}),(f_2,f_{j+2})(-f_2,-f_{j+2}),\dots,(f_{k-j},f_k)(-f_{k-j},-f_k);
\end{equation*}
namely,
\begin{align*}
F_{k-\ell}:\, & (d_{\ell},-d_k)(-d_{\ell},d_k),(-d_{\ell+1},d_1)(d_{\ell+1},-d_1), (-d_{\ell+2},d_2)(d_{\ell+2},-d_2), \dots; \\
F_{k-\ell+1}:\, & (d_{\ell},d_1)(-d_{\ell},-d_1),(-d_{\ell+1},d_2)(d_{\ell+1},-d_2),\dots; \\
\vdots\, & \\
F_{k-1}:\, & (d_{\ell},d_{\ell-1})(-d_{\ell},-d_{\ell-1}).
\end{align*}

\begin{exa} \label{exa:nonzero-edge-ordering} {\rm
Following Example \ref{exa:nonzero-edges}, the reflections $\mu^{-1}\nu\leq(2, 3, -7, -8, -9)(-2, -3, 7, 8, 9)$ are partitioned into the following four sequences:
\begin{align*}
F_1:\, & (-8,-9)(8,9), (-9,2)(9,-2), (3,-7)(-3,7), (-7,-8)(7,8); \\
F_2:\, & (-7,-9)(7,9), (-8,2)(8,-2), (3,-8)(-3,8); \\
F_3:\, & (3,-9)(-3,9), (-7,2)(7,-2); \\
F_4:\, & (3,2)(-3,-2).
\end{align*}

}
\end{exa}

\medskip
Given a sequence $1=z_1<\cdots<z_m\le n$ of integers, any $\alpha=(a_1,\dots,a_m)\in \{z_1,\dots,z_m\}^m$ can be regarded as a parking function of type $B$, using $m$ parking spaces numbered by $z_1,\dots,z_m$. Let $\pi=(p_1,\dots,p_m)$ be the outcome of $\alpha$, which is a permutation of $\{z_1,\dots,z_m\}$. Next, we describe a decomposition of $(a_1,\dots,a_{m-1})$ into a parking function of type $B$ (possibly empty) and a parking function of type $A$ with respect to $p_m$. To show this, we construct a specific set $Z_0$ from the prefix $(p_1,\dots,p_{m-1})$ of $\pi$ by the following procedure.

\medskip
\noindent
{\bf Procedure C.2}

For $(p_1,\dots,p_m)$, if $p_m=1$ (i.e., $z_1$), let $Z_0$ be empty.  If $p_m\neq 1$, in the $(m-1)$-tuple $(p_1,\dots,p_{m-1})$ find the greatest integer $z_g$ such that all of $z_1,z_2,\dots,z_{g-1}$ occur to the left of $z_g$, and then find the least integer $z_{m-h+1}$ such that all of $z_m,z_{m-1},\dots,z_{m-h+1}$ occur to the left of $z_g$ if any. Let $Z_0$ denote the set $\{z_1,\dots,z_g,z_{m-h+1},\dots,z_m\}$. We call $Z_0$ the \emph{zeta-naught set} of $(p_1,\dots,p_{m-1})$. Let $Z_1=\{p_1,\dots,p_{m-1}\}\setminus Z_0$.

\medskip
By a subword of $\alpha$ \emph{induced} on $Z_0$ ($Z_1$, respectively) we mean the subword consisting of the entries $a_i$ such that $p_i\in Z_0$ ($Z_1$, respectively).
For example, consider a parking function $\alpha=(3,8,8,7,8,5,5,7)$ in $\PF^B_8$ with outcome $\pi=(p_1,\dots,p_8)=(3,8,1,7,2,5,6,4)$. The zeta-naught set of $(p_1,\dots,p_7)$ is $Z_0=\{1,2,7,8\}$, and then $Z_1=\{3,5,6\}$. The subword of $\alpha$ induced on $Z_0$ ($Z_1$, respectively) is $(8,8,7,8)$ ($(3,5,5)$, respectively).
The following result shows that the subword of $\alpha$ induced on $Z_0$ ($Z_1$, respectively) is a parking function of type $B$ ($A$, respectively).

\medskip
\begin{lem} \label{lem:separation-zeta-set}
For $1\le j\le m-1$, we have
\begin{enumerate}
\item  $p_j\in Z_1$ if and only if $a_j\in Z_1$. In this case, either $a_j\le p_j<p_m$ or $p_m<a_j\le p_j$.
\item  $p_j\in Z_0$ if and only if $a_j\in Z_0$.
\end{enumerate}
\end{lem}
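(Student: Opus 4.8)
The plan is to unpack the definition of the zeta-naught set $Z_0$ given in Procedure C.2 and relate membership in $Z_0$ versus $Z_1$ to the parking dynamics of type $B$. The key structural fact is the position of $p_m$ relative to the ``threshold'' integer $z_g$: by construction $Z_0$ collects exactly those spaces $z_1,\dots,z_g$ filled by an unbroken initial run together with the trailing spaces $z_{m-h+1},\dots,z_m$ that (together with $p_m$) get filled, all of whose cars arrived before the car parking at $z_g$. I first record what $Z_0$ and $Z_1$ look like as sets of spaces: $Z_0$ is a union of an initial interval $\{z_1,\dots,z_g\}$ and a terminal interval $\{z_{m-h+1},\dots,z_m\}$ of the circle (wrapping around the starting point just before $z_1$), while $Z_1=\{z_{g+1},\dots,z_{m-h}\}$ is the complementary middle interval. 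Crucially $p_m$ lies in the middle block $Z_1$ in the sense that the car $C_m$ parks strictly between the initial and terminal runs, so $z_g < p_m$ and $p_m < z_{m-h+1}$ in the circular sense.

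Next I would argue part (ii), that $p_j\in Z_0 \iff a_j\in Z_0$, by the same monotonicity principle used in Lemma~\ref{lem:pn=k}: since all cars whose outcome lands in $Z_0$ arrive before the car that fills $z_g$, and $Z_0$ is an interval (initial plus terminal run around the circle not containing $p_m$), a car preferring a space in this interval never needs to cross $p_m$ to park, so its preference $a_j$ lies in the same interval $Z_0$. Conversely if $a_j\in Z_0$ then, because the spaces of $Z_0$ are occupied by an unbroken run before the critical car parks, the car $C_j$ parks within $Z_0$ as well, giving $p_j\in Z_0$. This is exactly the type-$A$/type-$B$ analogue of Lemma~\ref{lem:pn=k} applied to the circular arrangement cut at the gap created by $p_m$. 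Part (i) is then the logical complement: $p_j\in Z_1 \iff a_j\in Z_1$. The additional dichotomy ``either $a_j\le p_j<p_m$ or $p_m<a_j\le p_j$'' records that within $Z_1$ no car crosses $p_m$ either, so a car lands on the same side of $p_m$ as its preference; combined with $a_j\le p_j$ (a car never parks before its preferred space) this yields the two stated chains of inequalities depending on which side of $p_m$ the car sits.

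The main obstacle I anticipate is handling the circular (type-$B$) parking correctly, in particular the wrap-around at the starting gap just before space $1$: unlike type $A$, a car may pass space $z_m$ and continue toward space $1$, so the ``interval'' $Z_0$ is genuinely an arc of the circle rather than a prefix. I would therefore be careful to phrase the monotonicity argument relative to the single empty gap that $p_m$ creates, treating the circle as cut open immediately after $p_m$; once cut, the spaces $Z_0$ form a contiguous block at one end and $Z_1$ a contiguous block at the other, and the type-$A$ reasoning of Lemma~\ref{lem:pn=k} transfers. The delicate point is verifying that the greedy definitions of $z_g$ and $z_{m-h+1}$ in Procedure C.2 indeed guarantee that every car landing in $Z_0$ arrives before the car filling $z_g$, so that no $Z_1$-car can ``sneak'' a $Z_0$ preference and vice versa; this is where the extremal (greatest $z_g$, least $z_{m-h+1}$) choices do the real work, and I would prove it by contradiction, showing that a violating car would extend the initial or terminal run and contradict maximality.
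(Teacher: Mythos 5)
Your overall plan \emph{is} the paper's plan: regard the space $p_m$, which stays empty until the last car, as a barrier; view $Z_0$ as the circular arc $\{z_{m-h+1},\dots,z_m,z_1,\dots,z_g\}$ avoiding $p_m$; and rule out leakage between $Z_0$ and $Z_1$ by contradiction from the extremal choices in Procedure C.2. The paper executes this by fixing the time $k$ with $p_k=z_g$ and splitting the forward direction into $j<k$ (the still-empty space $z_g$ acts as a stopper, so a car preferring $Z_0$ parks in $Z_0$) and $j>k$ (a car landing at $z_{\ell'}\in Z_1$ with $Z_0$-preference would force all of $z_1,\dots,z_{\ell'-1}$ to occur left of $z_{\ell'}$ in $\pi$, contradicting the maximality of $z_g$), while the converse direction shows a car with $a_j=z_\ell\in Z_1$ ending in $Z_0$ would put $z_\ell,\dots,z_m$ left of $z_g$, contradicting the minimality of $z_{m-h+1}$. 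Your closing paragraph names exactly these contradictions, so the outline is sound; note also that your first ``delicate point'' (every car landing in $Z_0$ arrives no later than the car filling $z_g$) needs no proof — it is immediate from Procedure C.2. One descriptive slip: cutting the circle just after $p_m$ leaves $Z_0$ as a contiguous block in the \emph{middle}, flanked by the two $Z_1$ blocks $\{z:p_m<z\le z_{m-h}\}$ and $\{z:z_g<z<p_m\}$, not ``$Z_0$ at one end and $Z_1$ at the other''; that flanking structure is precisely what creates the leakage scenarios the extremality must exclude.

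The genuine gap is in your derivation of the dichotomy $a_j\le p_j<p_m$ or $p_m<a_j\le p_j$ from ``within $Z_1$ no car crosses $p_m$, so a car lands on the same side of $p_m$ as its preference.'' On a circle this inference fails: a car with $a_j>p_m$ can reach a space $p_j<p_m$ by wrapping through the whole $Z_0$ arc and past the starting point, never driving past the empty space $p_m$. The barrier property only confines $p_j$ to the forward arc from $a_j$ to $p_m$, and for $a_j>p_m$ that arc contains spaces on \emph{both} sides of $p_m$. Excluding the wrap is a second application of the extremality of $z_g$, not of the barrier: if the wrap occurred, the first empty space $p_j=z_{\ell'}$ would satisfy $\ell'>g$ with all of $z_1,\dots,z_{\ell'-1}$ occurring to its left in $\pi$, contradicting the choice of $z_g$ (this is in essence the content of Lemma \ref{lem:zeta-set-fact}, and is also what the paper's terse ``the $j$th car will park at one of the spaces $z_\ell,\dots,z_{m-h}$'' is suppressing). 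Two smaller omissions: deducing (i) from (ii) by complementation uses $a_j\neq p_m$ and $p_j\neq p_m$ for $j<m$ (true, since $p_m$ is empty until car $m$ arrives, but worth saying), and the degenerate case $p_m=z_1$, where $Z_0=\emptyset$ and no $z_g$ exists, falls outside your setup and needs the separate short argument the paper gives it.
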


\begin{proof} (i) If $p_m=1$ (i.e., $z_1$) then the space $1$ is last occupied. The set $Z_0$ is empty and $Z_1$ is $\{z_2,\dots,z_m\}$. Note that $a_j\neq 1$ for each $j\in [m-1]$. Thus $a_j\in Z_1$, say $a_j=z_{\ell}$. If $p_j<a_j$, the spaces $z_{\ell},z_{\ell+1},\dots,z_m$ have been occupied when the $j$th car arrives. Then the $j$th car will park at space $z_1$, a contradiction. Thus, $p_m<a_j\le p_j$.  Let $p_m\neq 1$. Suppose the $k$th car parks at space $z_g$, i.e., $z_g=p_k$ for some $k<m$.

($\Longrightarrow$) Suppose $p_j=z_{\ell'}\in Z_1$. If $a_j\in Z_0$, consider the following two cases.
\begin{itemize}
\item $j>k$. The parking spaces $z_{g+1},\dots,z_{\ell'-1}$ have been occupied when the $j$th car arrives. It follows that the elements $z_1,z_2,\dots,z_{\ell'-1}$ occur to the left of $z_{\ell'}$ in $\pi$, which is against the choice of $z_g$. 
\item $j<k$. The space $z_g$ is empty when the $j$th car has parked, and hence the $j$th car parks at one of the spaces in $Z_0$, which is against $p_j\in Z_1$.
\end{itemize}
Thus, we have $a_j\in Z_1$.

($\Longleftarrow$) Suppose $a_j=z_{\ell}\in Z_1$. If $p_j\in Z_0$, by Procedure C.2 we have $j\le k$. We observe that the parking spaces $z_{\ell}, z_{\ell+1}\dots, z_{m-h}$ have been occupied when the $j$th car arrives. It follows that the elements $z_{\ell}, z_{\ell+1}\dots, z_m$ occur to the left of $z_g$ in $\pi$, which is against the choice of $z_{m-h+1}$. Thus, we have $p_j\in Z_1$.

In the case of $a_j,p_j\in Z_1$, if $a_j<p_m$ then the $j$th car will park at a space before $p_m$ since the space $p_m$ is not occupied until the last car arrives. Thus, $a_j\le p_j<p_m$. Otherwise, $a_j>p_m$ and the $j$th car will park at one of the spaces $a_j=z_{\ell}, z_{\ell+1},\dots, z_{m-h}$. Thus $p_m<a_j\le p_j$. The assertion (i) follows.

The assertion (ii) is proved by the same argument.
\end{proof}

We have a further observation about Lemma \ref{lem:separation-zeta-set}(i).

\medskip
\begin{lem} \label{lem:zeta-set-fact}
If $p_m\neq 1$ then the entries $p_j\in Z_1$ satisfying $p_j<p_m$ occur to the left of $z_g$ in $(p_1,\dots,p_{m-1})$.
\end{lem}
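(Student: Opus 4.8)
The plan is to recast the statement purely in terms of positions in the word $(p_1,\dots,p_{m-1})$ and to exploit the maximality built into the choice of $z_g$ via a short induction, bypassing the parking dynamics almost entirely. For a value $z$ occurring among $p_1,\dots,p_{m-1}$, write $\langle z\rangle$ for its position, i.e.\ the index $i$ with $p_i=z$; thus ``$z$ occurs to the left of $z'$'' means $\langle z\rangle<\langle z'\rangle$. Since $\pi$ is a permutation of $\{z_1,\dots,z_m\}$ and $p_m\neq 1$, I would write $p_m=z_c$ for a unique index $c$, so that $\{p_1,\dots,p_{m-1}\}=\{z_1,\dots,z_m\}\setminus\{z_c\}$. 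I would first record two bookkeeping facts. (a) One has $g<c$: indeed $z_g\neq z_c$ forces $g\neq c$, while if $g>c$ then $z_c=p_m$ would be among $z_1,\dots,z_{g-1}$ and hence required to occur to the left of $z_g$, which is impossible since $z_c$ does not occur in $(p_1,\dots,p_{m-1})$ at all. (b) By Procedure~C.2 every element of $Z_1$ exceeds $z_g$, so an element of $Z_1$ that is smaller than $p_m=z_c$ is a value $z_s$ with $g<s<c$. By (b) the lemma is exactly the assertion that $\langle z_s\rangle<\langle z_g\rangle$ for every $s$ with $g<s<c$.

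The heart of the argument is to prove this last statement by strong induction on $s$. For such an $s$ the value $z_s$ occurs in $(p_1,\dots,p_{m-1})$ (as $s<c$) and is larger than $z_g$, so by the maximality in the choice of $z_g$ it cannot satisfy the defining property; hence some $z_r$ with $1\le r\le s-1$ occurs to its right, i.e.\ $\langle z_r\rangle>\langle z_s\rangle$, and $z_r$ indeed occurs in the word because $r<s<c$. I would then split on $r$. If $r<g$, then $\langle z_r\rangle<\langle z_g\rangle$ by the very definition of $z_g$, so $\langle z_s\rangle<\langle z_r\rangle<\langle z_g\rangle$. If $r=g$, then $\langle z_s\rangle<\langle z_g\rangle$ directly. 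If $g<r<s$, then the induction hypothesis gives $\langle z_r\rangle<\langle z_g\rangle$, whence again $\langle z_s\rangle<\langle z_r\rangle<\langle z_g\rangle$. In every case $\langle z_s\rangle<\langle z_g\rangle$, which completes the induction and the proof.

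The only delicate points, and where I would be most careful, are the two bookkeeping facts of the first paragraph: verifying $z_g<p_m$ (fact (a)), so that the relevant values really form the range $g<s<c$, and checking that the witness $z_r$ produced by maximality genuinely lies in $(p_1,\dots,p_{m-1})$ (which is why the bound $r<s<c$ matters). Once these are in place the induction is routine, and it is worth noting that it uses only the combinatorics of the positions together with the maximality in the definition of $z_g$; it does not re-derive anything from the parking process, so Lemma~\ref{lem:separation-zeta-set} enters only implicitly through the description of $Z_1$. If one preferred a dynamical argument, one could instead invoke Lemma~\ref{lem:separation-zeta-set}(i) to replace each $p_j<p_m$ in $Z_1$ by $a_j\le p_j<p_m$ and track fill-times, but the position-based induction above is cleaner and sidesteps the wrap-around subtleties of type-$B$ parking.
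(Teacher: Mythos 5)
Your proof is correct and takes essentially the same approach as the paper: both arguments use nothing but the maximality in the choice of $z_g$ together with purely positional reasoning in $(p_1,\dots,p_{m-1})$, with no appeal to the parking dynamics. The paper compresses the idea into a single minimal-element contradiction---the least entry positioned to the right of $z_g$ must exceed $p_m$, since otherwise maximality of $g$ would be violated---whereas you unwind the same mechanism as a strong induction on the index $s$ with $g<s<c$; your bookkeeping facts (a) and (b) are exactly the steps the paper's terser proof leaves implicit.
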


\begin{proof} By Procedure C.2, we observe that every element in $Z_1$ is greater than $z_g$. 
Suppose $p_j=z_{\ell}\in Z_1$ is the least entry on the right of $z_g$ in $(p_1,\dots,p_{m-1})$. If $p_j<p_m$ then the elements $z_{g+1},\dots,z_{\ell-1}$ occur to the left of $z_g$. It follows that all of $z_1,z_2,\dots,z_{\ell-1}$ occur to the left of $z_{\ell}$, which is against the choice of $z_g$. Hence $p_j>p_m$. The result follows.
\end{proof}

Now, we describe the map $\varphi_B$. Given $\alpha=(a_1,\dots,a_n)\in\PF^B_n$ with outcome $\pi=(p_1,\dots,p_n)$,
let $\varphi_B(\alpha)=\ww$, where $\ww:e=\omega_0<\omega_1<\cdots<\omega_n=c$ is a maximal chain in $\NC(B_n,c)$, and let $t_1t_2\cdots t_n$ be the corresponding factorization of $c$, where $t_i=\omega^{-1}_{i-1}\omega_i$. We shall construct the reflections $t_1,t_2,\dots, t_n$ in reverse order by the following procedure.
When working on $t_i$, the cycles of $\omega_i$ have been ordered $\delta_0\delta_1\cdots \delta_{n-i}$, where $\delta_0$ is the balanced cycle (possibly empty), and each $\delta_j$, $j>0$, is a paired cycle. The cycle $\delta_0$ ($\delta_j$, respectively) is associated with a tuple $\zeta_0$ ($\zeta_j$, respectively) of integers, where $\zeta_0,\zeta_1,\dots,\zeta_{n-i}$ form a partition of $[n]$. 
Initially, we associate $\omega_n=\delta_0$ with the $n$-tuple $\zeta_0=(1,\dots,n)$. In the following, let $\overline{\ell} \pmod m$ denote the integer congruent to $\ell$ mod $m$ with $1\le \overline{\ell}\le m$.

\medskip
\noindent
{\bf Algorithm C}

For $i=n,\dots, 1$, the reflection $t_i$ is determined by $a_i$ and $p_i$ as follows. 
Among the tuples $\zeta_0,\dots,\zeta_{n-i}$, find the tuple that contains the integer $p_i$. There are two cases.

{\bf Case 1.} $p_i\in\zeta_0$. Suppose $\delta_0=(d_1,\dots,d_{2m})$ for some $m$, where $0<d_1<\cdots<d_{m}$ and $d_{m+i}=-d_i$ for all $i\in [m]$. If $m=1$, we set $t_i=(d_1,-d_1)$. 

Let $m>1$. Let $\zeta_0=(z_1,\dots,z_{m})$, where $1=z_1<\cdots<z_{m}$, say $p_i=z_{\ell}$ and $a_i=z_{\ell'}$. Let $j=\overline{\ell'-\ell} \pmod {m}$. 

If $p_i=1$,  we set $t_i=(d_{m+j},d_{2m+j})$.
Note that $\delta_0$ is split by $t_i$ into a paired cycle $\delta'_1$ of size $m$. We associate $\delta'_1$ with the $m$-tuple $\zeta'_1=(z_2,\dots,z_{m},1^*)$.

Otherwise, $p_i\neq 1$. Let $\pi_0$ be the subword of $\pi$ consisting of the entries $p_r$ that occur in $\zeta_0$ and $r<i$. By Procedure C.2, create the zeta-naught set $Z_0$ of $\pi_0$. Suppose $|Z_0|=m-k$ for some $k\in [m-1]$, we set 
\begin{equation} \label{eqn:t_i}
 t_i=(d_{m-k+j},d_{m+j})(d_{2m-k+j},d_{2m+j}). 
\end{equation}
Note that $\delta_0$ is split by $t_i$ into a balanced cycle $\delta'_0$ of size $2(m-k)$ and a paired cycle $\delta'_1$ of size $k$, written in the order $\delta'_0\delta'_1$. 

Let $\{z_1,\dots,z_{m}\}\setminus Z_0=\{z_{h+1},\dots,z_{h+k}\}$ for some integer $h$. 
Since $p_i\not\in Z_0$, $p_i=z_{h+b}$ for some $b$. 
We associate $\delta'_0$ with an $(m-k)$-tuple $\zeta'_0$ and associate $\delta'_1$ with a $k$-tuple $\zeta'_1$ by setting
\begin{align}  \label{eqn:zeta-sequences-case-0}
\zeta'_0 &=(z_1,\dots,z_h,z_{h+k+1},\dots,z_{m}); \\  \label{eqn:zeta-sequences-case-1}
\zeta'_1 &=(z_{h+b+1},\dots,z_{h+k},z_{h+1},\dots,z_{h+b-1},z_{h+b}^*).
\end{align}
(The entry marked with a star, which equals $p_i$, in the tuple $\zeta'_1$ indicates that the corresponding parking space is not occupied until the $i$th car arrives.)

\smallskip
{\bf Case 2.} $p_i\in\zeta_r$ for some $r>0$. Suppose
\begin{equation*}
\delta_r=(d_1,\dots,d_{\ell},-d_{\ell+1},\dots,-d_{k})(-d_1,\dots,-d_{\ell},d_{\ell+1},\dots,d_{k})
\end{equation*}
for some $k$, where $0<d_1<\cdots<d_{k}$ and $1\le \ell\le k$. Let $\zeta_r=(y_1,\dots,y_{k-1},y^*_{k})$, say $p_i=y_j$ and $a_i=y_{j'}$, for some $j'\le j\le k-1$. (Currently, the parking space $y_k$ is empty.)

Write $\delta_r=(f_1,\dots,f_k)(-f_1,\dots,-f_k)$ as defined in (\ref{eqn:f-form}). Using Procedure C.1, create a family $F_1,\dots,F_{k-1}$ of sequences of reflections from $\delta_r$, and then set $t_i$ to be the $(j-j'+1)$th reflection in the sequence $F_{k-j}$, say $t_i=(f_{b},f_{b+k-j})(-f_{b},-f_{b+k-j})$ for some $b$.
Note that $\delta_r$ is split by $t_i$ into a paired cycle $\delta_{r,1}$ of size $j$ and a paired cycle $\delta_{r,2}$ of size $k-j$, written in the order $\delta_{r,1}\delta_{r,2}$, where
\begin{equation} \label{eqn:paired-cycle-split}
\begin{aligned}  
\delta_{r,1} &=(f_1,\dots,f_b,f_{b+k-j+1},\dots,f_k)(-f_1,\dots,-f_b,-f_{b+k-j+1},\dots,-f_k); \\  
\delta_{r,2} &=(f_{b+1},\dots,f_{b+k-j})(-f_{b+1},\dots,-f_{b+k-j}).
\end{aligned}
\end{equation}
Their associated tuples $\zeta_{r,1}, \zeta_{r,2}$ are obtained by cutting $\zeta_r$ after the entry $p_i$, i.e., 
\begin{equation} \label{eqn:zeta-sequences-case-2}
\zeta_{r,1}=(y_1,\dots,y_{j-1},y^*_{j}); \qquad \zeta_{r,2}=(y_{j+1},\dots,y_{k-1},y_{k}).
\end{equation}

\medskip
\begin{exa} \label{exa:PF-to-noncrossing-B} {\rm
Let $\alpha=(3,8,8,7,8,5,5,7)\in\PF^B_8$ with outcome $(3,8,1,7,2,5,6,4)$. Let $\varphi_B(\alpha)=\ww$, where $\ww:\omega_0<\omega_1<\cdots<\omega_8$, and let $t_1t_2\cdots t_8$ be the corresponding factorization of $\omega_8=(1,\dots,8,-1,\dots,-8)$, where $t_i=\omega_{i-1}^{-1}\omega_i$.
The construction of $t_1,\cdots,t_8$ is shown in Table \ref{tab:construct-type-B-noncrossing}. We use the abbreviation $[d_1,\dots,d_m]$ for a balanced cycle $(d_1,\dots,d_m,-d_1,\dots,-d_m)$, and the abbreviation $(d_1,\dots,d_{\ell},-d_{\ell+1},\dots,-d_{k})$ for a paired cycle in (\ref{eqn:nonzero}). Moreover, the tuple $\zeta_0$ associated to a balanced cycle is underlined, and the good and bad edges are indicated by $\circ$ and $\times$, respectively. Some initial steps are described below.

(1) $a_8=7$ and $p_8=4$. Initially, $p_8$ occurs in $\zeta_0=(1,\dots,8)$. We have $\overline{a_8-p_8}\equiv 3\pmod 8$. Creating the zeta-naught set of the sequence $(3,8,1,7,2,5,6)$, we have $Z_0=\{1,2,7,8\}$ and hence $|Z_0|=4$. By (\ref{eqn:t_i}), we set $t_8=(7,-3)(-7,3)$. Then $\omega_8$ is split by $t_8$ into $\delta'_0, \delta'_1$, where $\delta'_0=(4,5,6,7,-4,-5,-6,-7)$ and $\delta'_1=(1,2,3,-8)(-1,-2,-3,8)$. For their associated tuples, we set $\zeta'_0=(1,2,7,8)$ and $\zeta'_1=(5,6,3,4^*)$.

(2) $a_7=5$ and $p_7=6$. Note that $p_7$ occurs in the tuple $\zeta_1=(y_1,y_2,y_3,y_4)=(5,6,3,4^*)$, which is associated to a paired cycle $\delta_1=(1,2,3,-8)(-1,-2,-3,8)$. By Procedure C.1, create a family $F_1,F_2,F_3$ of sequences of reflections from $\delta_1$:
\begin{align*}
F_1:\, & (3,-8)(-3,8), (-8,1)(8,-1), (1,2)(-1,-2);\\
F_2:\, & (3,1)(-3,-1), (-8,2)(8,-2); \\
F_3:\, & (3,2)(-3,-2).
\end{align*}
Since $a_7=y_1$ and $p_7=y_2$, we set $t_7=(-8,2)(8,-2)$, i.e., the second reflection in $F_2$. Then $\delta_1$ is split by $t_7$ into $\delta_{1,1}, \delta_{1,2}$, where $\delta_{1,1}=(3,-8)(-3,8)$ and $\delta_{1,2}=(1,2)(-1,-2)$. For their associated tuples, we set $\zeta_{1,1}=(5,6^*)$ and $\zeta_{1,2}=(3,4)$.
}
\end{exa}

\begin{table}[ht]
\caption{The construction of the maximal chain $\varphi_B(\alpha)=t_1t_2\cdots t_8$ in Example \ref{exa:PF-to-noncrossing-B}.}
\centering
{\small
\begin{tabular}{c|cccccll|c}
$i$ &  &  $a_i$ & $p_i$  &  $|Z_0|$ & $t_i$  & \multicolumn{1}{c}{$\omega_i$}    & \multicolumn{1}{c|}{$\zeta_0\zeta_1\cdots\zeta_{8-i}$}  & $\circ\times$  \\[2pt]
\hline
  8 &  &  7     &  4     &   4     &    &  $[1,2,3,4,5,6,7,8]$          
     &  $(\underline{1,2,3,4,5,6,7,8})$   &    \\[-6pt]
    &  &        &        &         &  $(7,-3)(-7,3)$  &            
     &     &   $\times$ \\ 
  7 &  &  5     &  6     &        &     &  $[4,5,6,7](1,2,3,-8)$    & $(\underline{1,2,7,8})(5,6,3,4)$ &    \\[-6pt]
    &  &        &        &        &  $(-8,2)(8,-2)$   &       &   &   $\times$  \\ 
  6 &  &  5     &  5     &        &     &  $[4,5,6,7](3,-8)(1,2)$         & $(\underline{1,2,7,8})(5,6)(3,4)$  &     \\[-6pt]
    &  &        &        &        &  $(3,-8)(-3,8)$   &            &    &   $\circ$ \\ 
  5 &  &  8     &  2     &   2    &     &  $[4,5,6,7](3)(8)(1,2)$       & $(\underline{1,2,7,8})(5)(6)(3,4)$    &    \\[-6pt]
    &  &        &        &        &  $(7,-5)(-7,5)$   &          &      &   $\times$ \\ 
  4 &  &  7     &  7     &        &       &  $[6,7](4,5)(3)(8)(1,2)$        &  $(\underline{1,8})(7,2)(5)(6)(3,4)$    &   \\[-6pt]
    &  &        &        &        &  $(4,5)(-4,-5)$     &           &      &   $\circ$ \\ 
  3 &  &  8     &  1     &  0      &   &  $[6,7](4)(5)(3)(8)(1,2)$  &  $(\underline{1,8})(7)(2)(5)(6)(3,4)$   &    \\[-6pt]
    &  &        &        &         &  $(6,-6)$   &     &    &   $\times$ \\ 
  2 &  &  8     &  8    &         &      &  $(6,-7)(4)(5)(3)(8)(1,2)$  &  $(8,1)(7)(2)(5)(6)(3,4)$      &    \\[-6pt]
    &  &        &       &         &  $(6,-7)(-6,7)$   &     &          &   $\circ$ \\   
  1 &  &  3     & 3     &         &      &  $(6)(7)(4)(5)(3)(8)(1,2)$     & $(8)(1)(7)(2)(5)(6)(3,4)$     &    \\[-6pt]
    &  &        &       &         &  $(1,2)(-1,-2)$   &        &       &   $\circ$ \\
  0 &  &        &       &         &      &  $(6)(7)(4)(5)(3)(8)(1)(2)$   & $(8)(1)(7)(2)(5)(6)(3)(4)$     &   
\end{tabular}
}
\label{tab:construct-type-B-noncrossing}
\end{table}

\medskip
\begin{pro} \label{pro:preserving-B} We have
\begin{enumerate}
\item The maximal chain $\varphi_B(\alpha)$ is well-defined.
\item For $1\le i\le n$, $(\omega_{i-1},\omega_i)$ is a good edge in $\varphi_B(\alpha)$ if and only if the $i$th car of $\alpha$ is lucky.
\end{enumerate}
\end{pro}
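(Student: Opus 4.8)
The plan is to prove the two assertions of Proposition \ref{pro:preserving-B} by mirroring the structure of the type-$A$ argument in Proposition \ref{pro:preserving-A}, but now tracking the distinction between the balanced cycle and the paired cycles. For part (i), the goal is to show by reverse induction on $i=n,\dots,1$ that Algorithm C is genuinely executable, i.e.\ that at every stage the entry $a_i$ lies in the same tuple $\zeta_s$ as $p_i$, so that the reflection $t_i$ prescribed in Case 1 or Case 2 is well-defined. First I would verify the base case: initially $\omega_n$ is the single balanced cycle $\delta_0$ associated with $\zeta_0=(1,\dots,n)$, and both $a_n,p_n$ trivially lie in this tuple. For the inductive step I would split according to whether $p_i$ lands in the balanced tuple $\zeta_0$ (Case 1) or a paired tuple $\zeta_r$ (Case 2), and in each case check that after cutting $\zeta_0$ or $\zeta_r$ via (\ref{eqn:zeta-sequences-case-0})--(\ref{eqn:zeta-sequences-case-2}), the prefix preference $a_{i-1}$ still sits in the tuple containing $p_{i-1}$.

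The crucial input for the well-definedness is the decomposition machinery already established in Lemmas \ref{lem:separation-zeta-set} and \ref{lem:zeta-set-fact}. Concretely, in Case 1 (where $p_i\in\zeta_0$ and $p_i\neq 1$) the cut of $\zeta_0$ into $\zeta'_0$ and $\zeta'_1$ is governed by the zeta-naught set $Z_0$ built from $\pi_0$ in Procedure C.2; Lemma \ref{lem:separation-zeta-set} guarantees that each earlier preference $a_j$ lands in $Z_0$ exactly when $p_j$ does, so the balanced part and the paired part each receive a self-consistent sub-parking-function. Thus I would invoke Lemma \ref{lem:separation-zeta-set} to conclude that the induced subwords on $Z_0$ (type $B$) and on $Z_1$ (type $A$) are again parking functions of the correct types, and then apply Lemma \ref{lem:pn=k} to the type-$A$ piece inside each paired tuple (Case 2) to handle the purely type-$A$ splitting of (\ref{eqn:zeta-sequences-case-2}). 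The recursive structure then closes: once one descends into a paired cycle, all subsequent cuts are of type $A$ and are controlled exactly as in Algorithm~A.

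For part (ii), the plan is to read off the good/bad status of the edge $(\omega_{i-1},\omega_i)$ directly from the index data $(a_i,p_i)$ and compare it with the luckiness condition $a_i=p_i$. In Case 1 I would trace through the construction: when $p_i=1$ and $t_i=(d_{m+j},d_{2m+j})$, or when $t_i=(d_{m-k+j},d_{m+j})(d_{2m-k+j},d_{2m+j})$, the edge is good precisely when $j$ corresponds to $a_i=p_i$, which I would confirm against Proposition \ref{pro:zero-cycle} and the ordering set up in Lemma \ref{lem:zero-cycle-edge-classes}. In Case 2, by Procedure C.1 the reflection $t_i$ is the $(j-j'+1)$th entry of the sequence $F_{k-j}$, and the first entry of each $F_{k-j}$ is exactly the good edge by Lemma \ref{lem:nonzero-cycle-edge-classes}(ii); hence $(\omega_{i-1},\omega_i)$ is good iff $j-j'+1=1$, i.e.\ iff $j=j'$, i.e.\ iff $a_i=p_i$, which is precisely luckiness of the $i$th car.

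The main obstacle I expect is bookkeeping in Case 1: one must verify that the two indexing conventions agree, namely that the zeta-naught set $Z_0$ of size $m-k$ produced by Procedure C.2 corresponds correctly to the balanced sub-cycle $\delta'_0$ of size $2(m-k)$ and that the residue $j=\overline{\ell'-\ell}\pmod m$ together with the shift by $h$ (where $\{z_1,\dots,z_m\}\setminus Z_0=\{z_{h+1},\dots,z_{h+k}\}$) lands $t_i$ on the reflection of the correct type among the three families (\ref{eqn:B-reflections}). Reconciling the cyclic star-marked relabeling in (\ref{eqn:zeta-sequences-case-1}) with the cycle split coming from (\ref{eqn:t_i}), and confirming via Lemma \ref{lem:zeta-set-fact} that the entries of $Z_1$ smaller than $p_i$ indeed occur before $z_g$ so that the induced type-$A$ word parks consistently, is the delicate point; once that alignment is pinned down, both the well-definedness and the good/bad characterization follow mechanically.
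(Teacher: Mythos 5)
Your proposal is correct and follows essentially the same route as the paper: part (i) by reverse induction using Lemma \ref{lem:separation-zeta-set} to split the induced subword on $Z_0$ (type $B$) and $Z_1$ (type $A$) in Case 1 and the type-$A$ machinery of Lemma \ref{lem:pn=k} in Case 2, and part (ii) by reading off goodness from Proposition \ref{pro:zero-cycle} (where $j=m$ forces $a_i=p_i$) and from the fact that the good edge is placed first in each sequence $F_{k-j}$ of Procedure C.1 via Lemma \ref{lem:nonzero-cycle-edge-classes}. Your appeal to Lemma \ref{lem:zeta-set-fact} is harmless but not needed here (the paper invokes it only for the inverse map $\varphi_B^{-1}$); otherwise the argument matches the paper's proof.
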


\begin{proof} (i) By Algorithm C, it suffices to prove that $a_i$ occurs in the same tuple of integers as $p_i$ for each $i$. 

In the initial step, $a_n$ and $p_n$ are in the $n$-tuple $(1,\dots,n)$, $Z_0$ is the zeta-naught set of $(p_1,\dots,p_{n-1})$, and $Z_1=\{p_1,\dots,p_{n-1}\}\setminus Z_0$.
By Lemma \ref{lem:separation-zeta-set}, the subword of $\alpha$ induced on $Z_0$ ($Z_1$, respectively) is a parking function of type $B$ ($A$, respectively).

For $i\le n-1$, suppose $p_i$ occurs in the tuple $\zeta_0=(z_1,\dots,z_m)$ associated to a balanced cycle, the subword $\alpha'$ of $\alpha$ induced on $\{z_1,\dots,z_m\}$ is a parking function of type $B$. Hence $a_i$ occurs in $\zeta_0$. Moreover, in Case 1 of the algorithm, we observe that $\alpha'$ is decomposed into a parking function of type $B$ ($A$, respectively) with parking spaces $\zeta'_0$ in (\ref{eqn:zeta-sequences-case-0}) ($\zeta'_1$ in (\ref{eqn:zeta-sequences-case-1}), respectively).

On the other hand, suppose $p_i$ occur in the tuple $\zeta_r=(y_1,\dots,y_{k-1},y^*_k)$ associated to a paired cycle, the subword $\alpha''$ of $\alpha$ induced on $\{y_1,\dots,y_{k-1}\}$ is a parking function of type $A$.  Hence $a_i\in\{y_1,\dots,y_{k-1}\}$. Moreover, in Case 2 of the algorithm, $\alpha''$ is decomposed into two parking functions of type $A$ with parking spaces $\zeta_{r,1}$ and $\zeta_{r,2}$ in (\ref{eqn:zeta-sequences-case-2}), respectively.

By induction, $a_{i-1}$ occurs in the same tuple as $p_{i-1}$. Hence $\varphi_B(\alpha)$ is well-defined.

(ii) In Case 1,  suppose $p_i$ occurs in the tuple $\zeta_0=(z_1,\dots,z_m)$, say $p_i=z_{\ell}$. Then $a_i=z_{\ell'}$ for some $\ell'\in [m]$. Let $j=\overline{\ell'-\ell}\pmod m$. We set
\begin{equation*}
t_i=\begin{cases}
(d_{m+j},d_{2m+j}) &\mbox{if $p_i=1$}; \\
(d_{m-k+j},d_{m+j})(d_{2m-k+j},_{2m+j}) &\mbox{if $p_i\neq 1$}.
\end{cases}
\end{equation*}
By Proposition \ref{pro:zero-cycle}, $(\omega_{i-1},\omega_i)$ is a good edge if and only if $j=m$ (i.e., $a_i=p_i$), in which case the $i$th car of $\alpha$ is lucky.

In Case 2, suppose $p_i$ occurs in a tuple $\zeta_r=(y_1,\dots,y_{k-1},y_k^*)$, say $p_i=y_j$, we have $a_i=y_{j'}$ for some $1\le j'\le j\le k-1$. Note that there are $j$ reflections in the sequence $F_{k-j}$, and that we set $t_i$ to be the $(j-j'+1)$th reflection in $F_{k-j}$. By Lemma \ref{lem:nonzero-cycle-edge-classes} and Procedure C.1, $(\omega_{i-1},\omega_i)$ is a good edge if and only if $j'=j$, in which case the $i$th car of $\alpha$ is lucky. The result follows.
\end{proof}

The injectivity of the map $\varphi_B$ from the parking functions in $\PF^B_n$ to the maximal chains in $\NC(B_n,c)$ is proved. By (\ref{eqn:B-polynomial}) and (\ref{eqn:PF^B-generating-poly}), these two statistics $\reluc$ and $\bad$ share the same distribution. Thus, Theorem \ref{thm:type-B-PF-NC} follows.
In what follows we do establish the inverse map $\varphi_B^{-1}$ to prove the surjectivity.

\subsection{The map $\varphi_B^{-1}$} 
Given a maximal chain $\ww: e=\omega_0<\omega_1<\cdots<\omega_{n}=c$ in $\NC(B_n,c)$, let $t_1t_2\cdots t_{n}$ be the corresponding factorization of $c$, where $\omega^{-1}_{i-1}\omega_i=t_i$. Let $\varphi_B^{-1}(\ww)=\alpha=(a_1,\dots,a_{n})\in\PF^B_n$ with outcome $\pi=(p_1,\dots,p_n)$. 
The construction of $\alpha$ is described in two procedures. The first procedure is for constructing the subword of $\alpha$ corresponding to a paired cycle, which is the reverse operation in Case 2 of Algorithm C.

\medskip
\noindent
{\bf Procedure D.1}

Given a subword $\delta=t_{i_1}t_{i_2}\cdots t_{i_{k-1}}$ of $\ww$ and a $k$-tuple $\zeta=(z_1,\dots,z_{k-1},z^*_k)$ associated to $\delta$, where $\delta$ is a paired cycle of size $k$ and $1\le i_1<\dots <i_{k-1}<n$, we construct  $\alpha'$ and $\pi'$ such that $\alpha'=(a_{i_1},\dots,a_{i_{k-1}})\in\{z_1,\dots,z_{k-1}\}^{k-1}$ and $\pi'=(p_{i_1},\dots,p_{i_{k-1}})$ is a permutation of $\{z_1,\dots,z_{k-1}\}$ by the following procedure. Suppose
\begin{equation} \label{eqn:delta-nonzero}
\delta=(d_1,\dots,d_{\ell},-d_{\ell+1},\dots,-d_{k})(-d_1,\dots,-d_{\ell},d_{\ell+1},\dots,d_{k}),
\end{equation}
for some integers $0<d_1<\cdots< d_k$ and $1\le\ell\le k$. 

(i) Write $\delta=(f_1,\dots,f_k)(-f_1,\dots,-f_k)$ as defined in (\ref{eqn:f-form}). Using Procedure C.1, create a family $F_1,\dots,F_{k-1}$ of sequences of reflections from $\delta$.
Suppose $t_{i_{k-1}}$ is the $i$th reflection in $F_j$, we set $a_{i_{k-1}}=z_{k-j+1-i}$ and $p_{i_{k-1}}=z_{k-j}$. Note that $\delta$ is split by $t_{i_{k-1}}$ into a paired cycle $\delta_1$ ($\delta_2$, respectively) of size $k-j$ ($j$, respectively), written in the order $\delta_1\delta_2$. We associate $\delta_1$ and $\delta_2$ with a $(k-j)$-tuple $\zeta_1$ and a $j$-tuple $\zeta_2$, respectively, by setting
\begin{equation*}
\zeta_1 =(z_1,\dots,z_{k-j-1},z^*_{k-j}); \qquad
\zeta_2 =(z_{k-j+1},\dots,z_{k-1},z^*_k).
\end{equation*}

(ii) The rest of entries of $\alpha'$ and $\pi'$ can be obtained from $\delta_1$ and $\delta_2$ by iteration of (i) inductively. 

\begin{exa} \label{exa:A-procedure} {\rm
Suppose a paired cycle $\delta=(1,2,3,-8,-9,)(-1,-2,-3,8,9)$ is factorized as $\delta=t_{i_1}t_{i_2}t_{i_3}t_{i_4}$, where $t_{i_1}=(2,-9)(-2,9)$, $t_{i_2}=(3,-8)(-3,8)$, $t_{i_3}=(1,-9)(-1,9)$, and $t_{i_4}=(2,-8)(-2,8)$. Let $\zeta=(z_1,\dots,z^*_5)=(6,7,3,4,5^*)$  be the tuple associated to $\delta$. Using Procedure C.1, create a family $F_1,\dots,F_4$ of sequences of reflections from $\delta$:
\begin{align*}
F_1:\, & (-8,-9)(8,9), (-9,1)(9,-1), (1,2)(-1,-2), (3,-8)(-3,8);\\
F_2:\, & (3,-9)(-3,9), (-8,1)(8,-1), (-9,2)(9,-2); \\
F_3:\, & (3,1)(-3,-1), (-8,2)(8,-2); \\
F_4:\, & (3,2)(-3,-2).
\end{align*}
Note that $t_{i_4}=(-8,2)(8,-2)$ is the second reflection in $F_3$, we set $a_{i_4}=z_1=6$ and $p_{i_4}=z_2=7$. Then $\delta$ is split into $\delta_1, \delta_2$, where $\delta_1=(3,-8)(-3,8)$ and $\delta_2=(1,2,-9)(-1,-2,9)$ associated with the tuples $\zeta_1=(6,7^*)$ and $\zeta_2=(3,4,5^*)$, respectively. 

The construction of the corresponding words $\alpha'=(3,6,3,6)$ and $\pi'=(3,6,4,7)$ is shown in Table \ref{tab:construct-A-procedure}, where the notation for paired cycles is abbreviated and the working $\delta$ is underlined.

}
\end{exa}

\begin{table}[ht]
\caption{The construction of $\alpha'$ and $\pi'$ in Example \ref{exa:A-procedure}.}
\centering
\begin{tabular}{c|ccllccc|c}
$j$ &  &  $t_{i_j}$      & \multicolumn{1}{c}{$\delta$} & \multicolumn{1}{c}{$\zeta$}  &  $a_{i_j}$  &  $p_{i_j}$ & & $\circ\times$  \\[2pt]
\hline
  4 &  &  $(-8,2)(8,-2)$ &  $(\underline{1,2,3,-8,-9})$   &  $(6,7,3,4,5^*)$     &  6 &  7     &       &   $\times$ \\[2pt]
  3 &  &  $(-9,1)(9,-1)$ &  $(3,-8)(\underline{1,2,-9})$  & $(6,7^*)(3,4,5^*)$   &  3 &  4     &       &   $\times$ \\[2pt]
  2 &  &  $(3,-8)(-3,8)$ &  $(\underline{3,-8})(2,-9)(1)$  &  $(6,7^*)(3,4^*)(5^*)$   &  6 &  6     &       &   $\circ$ \\[2pt]  
  1 &  &  $(2,-9)(-2,9)$ &  $(3)(8)(\underline{2,-9})(1)$  &  $(6^*)(7^*)(3,4^*)(5)$  &  3 &  3     &       &   $\circ$ \\[2pt]
    &  &                 &  $(3)(8)(2)(9)(1)$  &  $(6^*)(7^*)(3^*)(4^*)(5^*)$   &    &        &       &  
\end{tabular}
\label{tab:construct-A-procedure}
\end{table}

Now, we present the map $\varphi_B^{-1}$, which is essentially constructed by the reverse operation in Case 1 of Algorithm C. Given a maximal chain $\ww: e=\omega_0<\omega_1<\cdots<\omega_{n}=c$, find the (unique) chain $e=\omega_{j_0}<\omega_{j_1}<\cdots<\omega_{j_{b}}=\omega_n$ satisfying the following conditions. 
\begin{enumerate}
\item[(D1)] For $1\le i\le b$, each $\omega_{j_i}$ contains a balanced cycle.
\item[(D2)] The integer $j_1$ is the least index such that $\omega_{j_1}$ contains a balanced cycle.
\item[(D3)] For $2\le i\le b$, the integer $j_i$ is the least index such that the balanced cycle of $\omega_{j_{i-1}}$ is properly contained in the balanced cycle of $\omega_{j_i}$.
\end{enumerate}
The map $\varphi_B^{-1}$ is described by an inductive procedure for constructing the subword of $\alpha$ corresponding to the balanced cycle of $\omega_{j_i}$ for $i=1,2,\dots,b$.

\medskip
\noindent
{\bf Algorithm D}

(i) For the base step, suppose $\delta=t_{i_1}t_{i_2}\dots t_{i_h}$ is the balanced cycle of $\omega_{j_1}$ for some integers $1\le i_1<i_2<\cdots<i_h=j_1$, we associate $\delta$ with the $h$-tuple $\zeta=(1,n-h+2,n-h+3,\dots,n)$. The corresponding subword $(a_{i_1},a_{i_2},\dots,a_{i_h})$ of $\alpha$ and the subword $(p_{i_1},p_{i_2},\dots,p_{i_h})$ of $\pi$ are constructed as follows. 

Suppose $\delta=(d_1,\dots,d_h,-d_1,\dots,-d_h)$ for some integers $0<d_1<\cdots<d_h$.
If $h=1$, set $a_{i_h}=p_{i_h}=1$ and we are done. Let $h>1$. Since $\delta$ is split by $t_{i_h}$ into a paired cycle $\delta_1$ of size $h$, note that $t_{i_h}=(d_j,-d_j)$ for some $j\in [h]$. 
We set $p_{i_h}=1$ and set 
\begin{equation} \label{eqn:a_ih}
a_{i_h}=\begin{cases}
1 &\mbox{if $t_{i_h}=(d_h,-d_h)$;} \\
n+1-h+j & \mbox{if $t_{i_h}=(d_j,-d_j)$ and $1\le j\le h-1$.}
\end{cases}
\end{equation}
We associate $\delta_1=t_{i_1}t_{i_2}\cdots t_{i_{h-1}}$ with the $h$-tuple $\zeta_1=(n-h+2,n-h+3,\dots,n,1^*)$.
Using $\delta_1$, $\zeta_1$ and Procedure D.1, we obtain the remaining entries $a_{i_1},a_{i_2},\dots,a_{i_{h-1}}$ and $p_{i_1},p_{i_2},\dots,p_{i_{h-1}}$.

(ii) For the inductive step, suppose $\delta=t_{u_1}t_{u_2}\cdots t_{u_{m}}$ is the balanced cycle of $\omega_{j_i}$ ($i\ge 2$) for some integers $1\le u_1<u_2<\cdots<u_m=j_i$, we shall construct the corresponding subword $(a_{u_1},a_{u_2},\dots,a_{u_m})$ of $\alpha$ and the subword $(p_{u_1},p_{u_2},\dots,p_{u_h})$ of $\pi$.

Let $\delta_0=t_{v_1}t_{v_2}\cdots t_{v_{m'}}$ be the balanced cycle of $\omega_{j_{i-1}}$ for some integers $1\le v_1<v_2<\cdots<v_{m'}=j_{i-1}$. By (D3), $\{v_1,\dots,v_{m'}\}\subset\{u_1,\dots,u_m\}$. Assume that the words $(a_{v_1},\dots,a_{v_{m'}})$ and $(p_{v_1},\dots,p_{v_{m'}})$ corresponding to $\delta_0$ have been determined, where $p_{v_{m'}}=\ell$ for some $\ell\ge 1$. Then the $m'$-tuple $\zeta_0$ associated to $\delta_0$ is
\begin{equation} \label{eqn:m-tuple-zeta-0}
\zeta_0=(1,2,\dots,\ell,n-m'+\ell+1,n-m'+\ell+2,\dots,n).
\end{equation}
The remaining entries of $(a_{u_1},a_{u_2},\dots,a_{u_m})$ and $(p_{u_1},p_{u_2},\dots,p_{u_m})$ are constructed as follows.

Suppose $\delta=(d_1,\dots,d_m,-d_1,\dots,-d_m)$, where $0<d_1<\cdots<d_m$.
Note that $\delta$ is split by $t_{u_m}$ into the balanced cycle $\delta_0$ of size $m'$ and a paired cycle $\delta_1$ of size $m-m'$. We determine the integer $j\in [m]$ such that $t_{u_m}$ is written as
\begin{equation} \label{eqn:t_u_m}
t_{u_m}=(d_{m'+j},d_{m+j})(d_{m+m'+j},d_{2m+j}).
\end{equation}
Moreover, $\delta_1$ consists of the reflections $t_r$ such that $r\in\{u_1,\dots,u_{m-1}\}\setminus\{v_1,\dots,v_{m'}\}$, say $\delta_1=t_{u_{i_1}}t_{u_{i_2}}\cdots t_{u_{i_{m-m'-1}}}$, where $1\le i_1<i_2<\cdots<i_{m-m'-1}\le m-1$. 
As an intermediate stage, we associate $\delta_1$ with the $(m-m')$-tuple $\zeta'_1=(1,\dots,m-m'-1,(m-m')^*)$, and construct a pair $(\alpha',\pi')$ from $\delta_1$ and $\zeta'_1$ by using Procedure D.1, where $\alpha'=\big(a_{u_{i_1}},a_{u_{i_2}}\dots, a_{u_{i_{m-m'-1}}}\big)\in [m-m'-1]^{m-m'-1}$ and $\pi'=\big(p_{u_{i_1}},p_{u_{i_2}},\dots, p_{u_{i_{m-m'-1}}}\big)$ is a permutation of $[m-m'-1]$. 

Among the entries of $\pi'$, find the least integer $m-m'-b$ such that all of $m-m'-1, m-m'-2,\dots,m-m'-b$ occur to the left of $p_{v_{m'}}$, i.e., $p_{u_{i_j}}\in\{m-m'-1, m-m'-2, \dots, m-m'-b\}$ implies $u_{i_j}<v_{m'}$ (possibly all of the entries of $\pi'$ occur to the right of $p_{v_{m'}}$, in which case $b=0$). 
With this integer $b$, we obtain from (\ref{eqn:m-tuple-zeta-0}) the $m$-tuple $\zeta$ associated to $\delta$ by setting 
\begin{equation} \label{eqn:m-tuple-zeta-ii}
\begin{aligned}
\zeta  &=(z_1,\dots,z_m)\\
       &=(1,2,\dots,\ell+b+1,n-m+\ell+b+2,n-m+\ell+b+3,\dots,n).
\end{aligned}
\end{equation}
Then we set $p_{u_m}=\ell+b+1$, say $p_{u_m}=z_r$. Using the integer $j$ from (\ref{eqn:t_u_m}), we set $a_{u_m}=z_{r'}$, where $r'=\overline{r+j}\pmod m$.  From (\ref{eqn:m-tuple-zeta-0}) and (\ref{eqn:m-tuple-zeta-ii}), we replace $\zeta'_1$ by the $(m-m')$-tuple
\begin{equation} \label{eqn:m-tuple-zeta-iii}
\zeta_1=(n-m+\ell+b+2,n-m+\ell+b+3,\dots,n-m'+\ell,\ell+1,\dots,\ell+b,(\ell+b+1)^*).
\end{equation}
Using $\delta_1$, $\zeta_1$ and Procedure D.1, we obtain the requested entries $a_{u_{i_1}},a_{u_{i_2}}\dots, a_{u_{i_{m-m'-1}}}$ and $p_{u_{i_1}},p_{u_{i_2}},\dots, p_{u_{i_{m-m'-1}}}$.

\medskip
\begin{exa} \label{exa:B-procedure} {\rm
Let $\ww$ be a maximal chain in $\NC(B_9,c)$ with the corresponding factorization $t_1t_2\cdots t_9$ of $c$ given in Table \ref{tab:construct-B-procedure}, where we find the chain $e=\omega_0<\omega_4<\omega_7<\omega_9$ satisfying the conditions (D1)-(D3). The construction of $\varphi_B^{-1}(\ww)=\alpha=(a_1,\dots,a_9)$ with outcome $\pi=(p_1,\dots,p_9)$ is described below. 

(1) For the initial step, we observe that the balanced cycle of $\omega_4$ is $\delta=t_2t_4=(4,7,-4,-7)$, and hence we associate $\delta$ with $\zeta=(1,9)$. Since $t_4=(4,-4)$, by (\ref{eqn:a_ih}) we set $p_4=1$ and $a_4=9$. Using $\delta_1=t_2$ and $\zeta_1=(9,1^*)$, we obtain $a_2=9$ and $p_2=9$ by Procedure D.1.

(2) The balanced cycle of $\omega_7$ is $\delta=t_2t_4t_5t_7=(4,5,6,7,-4,-5,-6,-7)$, which is split by $t_7$ into $\delta_0=t_2t_4$ and $\delta_1=t_5$. Since $t_7=(4,6)(-4,-6)$, by (\ref{eqn:t_u_m}) we determine the integer $j=3$ (in this case, $m'=2$ and $m=4$).
Note that $\pi'=(p_5)$ is on the right of $p_4$. 
By (\ref{eqn:m-tuple-zeta-ii}), we determine the tuple $\zeta=(z_1,z_2,z_3,z_4)=(1,2,8,9)$ (in this case, $b=0$). We set $p_7=2=z_2$ and set $a_7=z_1=1$ (in this case, $r=2$ and $j=3$). By (\ref{eqn:m-tuple-zeta-iii}), we have $\zeta_1=(8,2^*)$. Using $\delta_1=t_5=(5,6)(-5,-6)$ and $\zeta_1$, we obtain the entries $a_5=8$ and $p_5=8$ by Procedure D.1.

(3) The balanced cycle of $\omega_9$ is $\delta=t_1t_2\cdots t_9=(1,\dots,9,-1,\dots,-9)$, which is split by $t_9$ splits into $\delta_0=t_2t_4t_5t_7$ and $\delta_1=t_1t_3t_6t_8t_9=(1,2,3,-8,-9)(-1,-2,-3,8,9)$. By $t_9=(3,-7)(-3,7)$, we have $j=3$ (in this case, $m'=4$ and $m=9$).
As an intermediate stage, we associate $\delta_1$ with $\zeta'_1=(1,2,3,4,5^*)$.  Using $\delta_1$ and $\zeta'_1$, we construct a pair $(\alpha',\pi')$ by Procedure D.1, where $\alpha'=(a_1,a_3,a_6,a_8)=(3,1,3,1)$ and $\pi'=(p_1,p_3,p_6,p_8)=(3,1,4,2)$. Note that $\{p_1,p_6\}=\{3,4\}$ is the requested maximal subset of $\pi'$ that occurs to the left of $p_7$. By (\ref{eqn:m-tuple-zeta-ii}), we determine the tuple $\zeta=(z_1,z_2,\dots,z_9)=(1,2,\dots,9)$ (in this case $b=2$). We set $p_9=5=z_5$ and set $a_9=z_8=8$ (in this case $r=5$ and $j=3$). By (\ref{eqn:m-tuple-zeta-iii}), replace $\zeta'_1$ by $\zeta_1=(6,7,3,4,5^*)$. Using $\delta_1$ and $\zeta_1$, we construct the entries $(a_1,a_3,a_6,a_8)=(3,6,3,6)$ and $(p_1,p_3,p_6,p_8)=(3,6,4,7)$ by Algorithm D.1, as shown in Example \ref{exa:A-procedure}. Thus, we obtain the corresponding parking function $\alpha=(3,9,6,9,8,3,1,6,8)$ with outcome $\pi=(3,9,6,1,8,4,2,7,5)$.
}
\end{exa}

\begin{table}[ht]
\caption{The construction of $\varphi_B^{-1}(\ww)$ in Example \ref{exa:B-procedure}.}
\centering
\begin{tabular}{c|cllcc|c}
$i$ & $t_i$ &    \multicolumn{1}{c}{$\omega_i$} & \multicolumn{1}{c}{$\zeta$} &    $a_{i_j}$  &  $p_{i_j}$ &  $\circ\times$  \\[2pt]
\hline
  0 &                  &    $(1)(2)(3)(4)(5)(6)(7)(8)(9)$  &                 &      &               &   \\[2pt]  
  1 &  $(2,-9)(-2,9)$  &    $(1)(2,-9)(3)(4)(5)(6)(7)(8)$  &   &   3  &  3    &   $\circ$ \\[2pt] 
  2 &  $(4,-7)(-4,7)$  &   $(1)(2,-9)(3)(4,-7)(5)(6)(8)$  &   &    9  &   9    &   $\circ$ \\[2pt] 
  3 & $(3,-8)(-3,8)$   &    $(1)(2,-9)(3,-8)(4,-7)(5)(6)$  &   &   6  &  6    &   $\circ$ \\[2pt] 
  4 & $(4,-4)$         &    $[4,7](1)(2,-9)(3,-8)(5)(6)$  &  $(1,9)$ &   9   &    1      &          $\times$ \\[2pt] 
  5 & $(5,6)(-5,-6)$   &    $[4,7](1)(2,-9)(3,-8)(5,6)$  &   &     8  &  8    &   $\circ$ \\[2pt]  
  6 & $(1,-9)(-1,9)$   &    $[4,7](1,2,-9)(3,-8)(5,6)$  &   &    3    &  4    &   $\times$ \\[2pt]  
  7 & $(4,6)(-4,-6)$   &     $[4,5,6,7](1,2,-9)(3,-8)$  &  $(1,2,8,9)$    &    1     & 2       &         $\times$ \\[2pt] 
  8 & $(2,-8)(-2,8)$   &    $[4,5,6,7](1,2,3,-8,-9)$  &        &    6     &  7     &  $\times$ \\[2pt]
  9 & $(3,-7)(-3,7)$   &    $[1,2,3,4,5,6,7,8,9]$     &  $(1,2,3,4,5,6,7,8,9)$       &    8  &  5     &        $\times$ 
\end{tabular}
\label{tab:construct-B-procedure}
\end{table}

\medskip
\noindent
{\bf Remarks.} We describe the strategy used in the inductive step of Algorithm D. Note that $\{p_{v_1},\dots,p_{v_{m'}}\}$ is in fact the zeta-naught set $Z_0$ of $(p_{u_1},p_{u_2},\dots, p_{u_{m-1}})$ with $Z_1=\{p_{u_{i_1}},p_{u_{i_2}},\dots$, $p_{u_{i_{m-m'-1}}}\}$. By Procedure C.2, every element in $Z_1$ is greater than $p_{v_{m'}}=\ell$. By Lemma \ref{lem:zeta-set-fact}, the entries $p_j\in Z_1$ satisfying $p_j<p_{u_m}$ occur to the left of $p_{v_{m'}}$. Thus, in order to determine $p_{u_m}$, we need the greatest integer $b$ such that all of $\ell+1,\dots,\ell+b$ in the word $(p_{u_{i_1}},p_{u_{i_2}},\dots, p_{u_{i_{m-m'-1}}})$ occur to the left of $p_{v_{m'}}$ if any. By (\ref{eqn:zeta-sequences-case-1}), these integers are the last $b+1$ entries in the $(m-m')$-tuple $\zeta_1$. Using the temporary $(m-m')$-tuple $\zeta'_1=(1,\dots,m-m'-1,(m-m')^*)$ associated to $\delta_1=t_{u_{i_1}}t_{u_{i_2}}\cdots t_{u_{i_{m-m'-1}}}$, we construct $(\alpha',\pi')$ by Procedure D.1 and find the integer $b$ from the maximal number of entries $m-m'-1, m-m'-2,\dots,m-m'-b$ of $\pi'$ that occur to the left of $p_{v_{m'}}$. Using this integer $b$, we set $p_{u_m}=\ell+b+1$ and determine the  $m$-tuple $\zeta$ in (\ref{eqn:m-tuple-zeta-ii}) and the $(m-m')$-tuple $\zeta_1$ in (\ref{eqn:m-tuple-zeta-iii}).

\medskip
The proof of Theorem \ref{thm:type-B-PF-NC} is completed. The correspondence of the bijection $\varphi_B$ for $n=3$ is listed in Table \ref{tab:map-PF-B3} for reference.

\begin{table}[ht]
\caption{The map $\varphi_B:\alpha\mapsto\ww=t_1t_2t_3$ for all $\alpha\in\PF^B_3$.}
\centering
{\small
\begin{tabular}{cccc|ccc|c}
\hline
 $\alpha$  & & $\pi$  & & & $\ww$ & &  $\circ\times$\\
\hline
  123   & &  123   & & & $[(1,-1)][(1,2)(-1,-2)][(2,3)(-2,-3)]$    & &   \multirow{6}{0.8cm}{$\circ\circ\circ$}   \\[2pt]
  132   & &  132   & & & $[(1,-1)][(2,3)(-2,-3)][(1,3)(-1,-3)]$    & &      \\[2pt]
  213   & &  213   & & & $[(2,3)(-2,-3)][(1,-1)][(1,3)(-1,-3)]$    & &      \\[2pt]
  231   & &  231   & & & $[(2,3)(-2,-3)][(1,3)(-1,-3)][(3,-3)]$    & &      \\[2pt]
  312   & &  312   & & & $[(1,2)(-1,-2)][(2,-2)][(2,3)(-2,-3)]$    & &      \\[2pt]
  321   & &  321   & & & $[(1,2)(-1,-2)][(2,3)(-2,-3)][(3,-3)]$    & &      \\
\hline
  121   & &  123   & & & $[(2,-2)][(2,3)(-2,-3)][(1,-3)(-1,3)]$    & &    \multirow{12}{0.8cm}{$\circ\circ\times$}  \\[2pt]
  122   & &  123   & & & $[(1,-1)][(1,3)(-1,-3)][(1,2)(-1,-2)]$    & &      \\[2pt]
  131   & &  132   & & & $[(3,-3)][(1,2)(-1,-2)][(2,-3)(-2,3)]$    & &      \\[2pt]
  133   & &  132   & & & $[(2,-2)][(1,-3)(-1,3)][(1,-2)(-1,2)]$    & &      \\[2pt]
  211   & &  213   & & & $[(1,-3)(-1,3)][(2,-2)][(1,-2)(-1,2)]$    & &      \\[2pt]
  212   & &  213   & & & $[(1,2)(-1,-2)][(3,-3)][(2,-3)(-2,3)]$    & &      \\[2pt]
  232   & &  231   & & & $[(1,-2)(-1,2)][(2,3)(-2,-3)][(1,-1)]$      & &      \\[2pt]
  233   & &  231   & & & $[(1,2)(-1,-2)][(2,-3)(-2,3)][(2,-2)]$    & &      \\[2pt]
  311   & &  312   & & & $[(1,3)(-1,-3)][(3,-3)][(1,2)(-1,-2)]$    & &      \\[2pt]
  313   & &  312   & & & $[(2,3)(-2,-3)][(3,-3)][(1,-3)(-1,3)]$    & &      \\[2pt]
  322   & &  321   & & & $[(2,3)(-2,-3)][(1,-3)(-1,3)][(1,-1)]$    & &      \\[2pt]
  323   & &  321   & & & $[(1,-3)(-1,3)][(1,2)(-1,-2)][(2,-2)]$    & &      \\
\hline  
  113   & &  123   & & & $[(2,-2)][(1,-2)(-1,2)][(2,3)(-2,-3)]$    & &   \multirow{3}{0.8cm}{$\circ\times\circ$}   \\[2pt]
  221   & &  231   & & & $[(1,3)(-1,-3)][(1,2)(-1,-2)][(3,-3)]$    & &      \\[2pt]
  332   & &  312   & & & $[(1,-2)(-1,2)][(1,-1)][(2,3)(-2,-3)]$    & &      \\
\hline 
  111   & &  123   & & & $[(3,-3)][(2,-3)(-2,3)][(1,-3)(-1,3)]$    & &    \multirow{6}{0.8cm}{$\circ\times\times$}  \\[2pt]
  112   & &  123   & & & $[(3,-3)][(1,-3)(-1,3)][(1,2)(-1,-2)]$    & &      \\[2pt]
  222   & &  231   & & & $[(1,-3)(-1,3)][(1,-2)(-1,2)][(1,-1)]$    & &      \\[2pt]
  223   & &  231   & & & $[(2,-3)(-2,3)][(1,-3)(-1,3)][(2,-2)]$    & &      \\[2pt]
  333   & &  312   & & & $[(2,-3)(-2,3)][(2,-2)][(1,-3)(-1,3)]$    & &      \\[2pt]
  331   & &  312   & & & $[(1,-3)(-1,3)][(1,-1)][(1,2)(-1,-2)]$    & &      \\
\hline    
\end{tabular}
}
\label{tab:map-PF-B3}
\end{table}

Let $B_n(q):=M(B_n,q)$. We observe that the polynomial $B_n(q)$ satisfies the following recurrence relation.

\begin{pro} 
For $n\ge 1$, we have
\begin{equation*}
B_n(q)=\big(1+(n-1)q\big)\sum_{k=1}^{n} \binom{n-1}{n-k} B_{n-k}(q)A_{k-1}(q),
\end{equation*}
with  $B_0(q)=1$.
\end{pro}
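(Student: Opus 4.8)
The plan is to mirror the proof of the type-$A$ recurrence by conditioning on the final reflection $t_n=\omega_{n-1}^{-1}\omega_n$ of the factorization $t_1t_2\cdots t_n$ of $c$ attached to a maximal chain $\ww$, and to exploit the fact that $\omega_n=c$ is a single balanced cycle of size $2n$. Since $t_n\le c$, the reflection $t_n$ is comparable with this balanced cycle, so Lemma~\ref{lem:zero-cycle-edge-classes} applies with $m=n$: the reflections $\le c$ are partitioned into classes $E_1,\dots,E_n$, and $t_n$ lies in exactly one of them, say $E_k$.

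First I would record the two pieces of data attached to the class $E_k$. By Lemma~\ref{lem:zero-cycle-edge-classes}(i)--(ii) we have $|E_k|=n$, and exactly one element of $E_k$ yields a good edge $(\omega_{n-1},\omega_n)$ while the remaining $n-1$ yield bad edges; hence the choice of $t_n$ within $E_k$ contributes the weight $1+(n-1)q$ to $\sum_{\ww}q^{\bad(\ww)}$, and crucially this weight is independent of $k$. By Lemma~\ref{lem:zero-cycle-edge-classes}(iii), each such $t_n$ splits $c$ into a balanced cycle $\delta_0'$ of size $2(n-k)$ and a paired cycle $\delta_1'$ of size $k$, so $\omega_{n-1}$ consists of one balanced cycle, one paired cycle, and fixed points.

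Next I would analyse the maximal chains from $e$ to $\omega_{n-1}$. Since $\delta_0'$ and $\delta_1'$ act on disjoint supports, the interval $[e,\omega_{n-1}]$ factors as $[e,\delta_0']\times[e,\delta_1']$; by the structure-independence of noncrossing lattices, $[e,\delta_0']\cong\NC(B_{n-k},c')$ of rank $n-k$ and $[e,\delta_1']\cong\NC(\mathfrak{S}_k,c'')$ of rank $k-1$ (the paired-cycle case being governed by Lemma~\ref{lem:nonzero-cycle-edge-classes}, which mirrors the type-$A$ Lemma~\ref{lem:A-edge-classes}). A maximal chain in such a product is a shuffle of a maximal chain in each factor, and there are $\binom{(n-k)+(k-1)}{n-k}=\binom{n-1}{n-k}$ shuffles. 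The key point, which I would justify using Propositions~\ref{pro:zero-cycle} and \ref{pro:nonzero-cycle}, is that the good/bad status of an edge depends only on the relevant cycle of its top element together with the reflection, hence is intrinsic to the factor in which the edge lives and is unaffected by the shuffle; therefore $\bad$ of the whole chain is the sum of the $\bad$-statistics of its two components. This gives
\begin{equation*}
\sum_{\ww:\, e\to\omega_{n-1}} q^{\bad} = \binom{n-1}{n-k}\, B_{n-k}(q)\, A_{k-1}(q),
\end{equation*}
and, again by structure-independence, this value is the same for every $t_n\in E_k$.

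Finally I would assemble the pieces. Since the classes $E_1,\dots,E_n$ partition the admissible $t_n$, summing over $k$ and factoring out the uniform weight $1+(n-1)q$ yields
\begin{equation*}
B_n(q)=\sum_{k=1}^{n}\bigl(1+(n-1)q\bigr)\binom{n-1}{n-k}B_{n-k}(q)A_{k-1}(q)=\bigl(1+(n-1)q\bigr)\sum_{k=1}^{n}\binom{n-1}{n-k}B_{n-k}(q)A_{k-1}(q),
\end{equation*}
with the boundary cases $k=n$ (empty balanced cycle, $B_0(q)=1$) and $k=1$ (paired cycle of size $1$, $A_0(q)=1$) handled automatically. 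The main obstacle I anticipate is the careful verification that the $\bad$ statistic is genuinely additive across the product decomposition and invariant under the isomorphisms onto the smaller noncrossing lattices --- that is, that declaring an edge good or bad is a purely local operation on a single cycle --- since $B_{n-k}(q)$ and $A_{k-1}(q)$ may be invoked only once this locality is secured.
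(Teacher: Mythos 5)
Your proposal is correct and takes essentially the same approach as the paper's own proof: you condition on the last reflection $t_n$, use the partition of the reflections below $c$ into the classes of Lemma~\ref{lem:zero-cycle-edge-classes} (the paper's sets $T_1,\dots,T_n$ are exactly these classes) to extract the uniform factor $1+(n-1)q$, and split the remaining chain into a balanced-cycle part counted by $B_{n-k}(q)$ and a paired-cycle part counted by $A_{k-1}(q)$, shuffled in $\binom{n-1}{n-k}$ ways. The only difference is that you spell out the locality and additivity of the $\bad$ statistic across the product decomposition (via Propositions~\ref{pro:zero-cycle} and \ref{pro:nonzero-cycle}), a point the paper leaves implicit.
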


\begin{proof} We enumerate the factorizations $t_1t_2\cdots t_n$ of $c$ corresponding to a maximal chain $\ww:e=\omega_0\le\omega_1\le\cdots\le\omega_n=c$, where $t_i=\omega^{-1}_{i-1}\omega_i$. For congruence of integers modulo $2n$, the notation $n+i$ ($2n+i$, respectively) is used to stand for $-i$ ($i$, respectively) for $1\le i\le n$.

Let $T_1,\dots,T_n$ be a family of sets of reflections given by
\begin{equation} \label{eqn:T-reflection}
\begin{aligned}
T_j &:=\{(i, i+j)(n+i,n+i+j) : 1\le i\le n\} \quad\mbox{if $1\le j\le n-1$;} \\
T_n &:=\{(i, i+n) : 1\le i\le n\}.
\end{aligned}
\end{equation}
The reflection $t_n$ is in one of the sets $T_1,\dots,T_n$, say $t_n\in T_k$. By Lemma \ref{lem:zero-cycle-edge-classes}, among the $n$ reflections in $T_k$, there is exactly one possibility for $t_n$ such that $(\omega_{n-1},\omega_n)$ is a good edge. Moreover, the reflection $t_n$ splits $\omega_n$ into a balanced cycle $\delta_0$ of size $2(n-k)$ and a paired cycle $\delta_1$ of size $k$, and among $t_1, t_2, \dots, t_{n-1}$ there are $n-k$ ($k-1$, respectively) reflections comparable with $\delta_0$ ($\delta_1$, respectively). The generating $q$-polynomial for the maximal chains in the $\delta_0$-noncrossing partition lattice ($\delta_1$-noncrossing partition lattice, respectively) is $B_{n-k}(q)$ ($A_{k-1}(q)$, respectively).  The result follows.
\end{proof}

\section{Words of types $A$ and $B$}  In this section we study a simple object in connection with parking functions from the prospective of the polynomials $M(W,q)$. In the case of $W=B_n$, this object will be used in the proof of the $\gamma$-positivity of $M(B_n,q)$ (Theorem \ref{thm:gamma-positive}).

\subsection{Type-$A$ words} Let $\X^A_n$ denote the set of words $\xx=x_1x_2\cdots x_{n-1}$ over the alphabet $\{0,1,\dots,n\}$. Notice that a word $\xx\in\X^A_n$ consists of $n-1$ letters.
A letter $x_i$ is an \emph{excedance} of $\xx$ if $x_i>i$. Let $\exc(\xx)$ denote the number of excedances of $\xx$. By (\ref{eqn:A-polynomial}), it is easy to see that 
\begin{equation}
M(\mathfrak{S}_{n+1},q)=\sum_{\xx\in\X^A_n} q^{\exc(\xx)}.
\end{equation}

\begin{thm} \label{thm:bijection-PF-Word} There is a bijection $\psi_A: \alpha\mapsto \xx=x_1\cdots x_{n-1}$ of $\PF^A_n$ onto $\X^A_n$ such that for each $i\in\{2,\dots,n\}$ the $i$th car of $\alpha$ is reluctant if and only if $x_{n-i+1}$ is an excedance. 
\end{thm}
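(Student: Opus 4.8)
The plan is to first record the enumerative identity that underlies the statement and then upgrade it to a position-indexed, statistic-preserving bijection by an explicit car-by-car construction. On the word side, the letter $x_i$ is an excedance precisely when $x_i\in\{i+1,\dots,n\}$, so treating the $n-1$ independent letters separately gives
\[
\sum_{\xx\in\X^A_n} q^{\exc(\xx)}=\prod_{i=1}^{n-1}\big((i+1)+(n-i)q\big),
\]
and the substitution $k=n-i$ turns this into $\prod_{k=1}^{n-1}(n-k+1+kq)=M(\mathfrak{S}_{n+1},q)$, which by (\ref{eqn:A-polynomial}) and (\ref{eqn:PF^A-generating-poly}) equals $\sum_{\alpha\in\PF^A_n} q^{\reluc(\alpha)}$. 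This already yields $|\X^A_n|=|\PF^A_n|=(n+1)^{n-1}$ and the equidistribution of $\exc$ with $\reluc$; the remaining work is to realize the refined correspondence $i\mapsto n-i+1$.

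Next I would construct $\psi_A$ explicitly. Since the first car always parks at its preferred space it is never reluctant, which is exactly why only cars $2,\dots,n$ carry information, matching the $n-1$ letters $x_1,\dots,x_{n-1}$ under $i\mapsto n-i+1$. I would emit one letter per car, processing the cars in reverse order and using the prefix-decomposition of Lemma \ref{lem:pn=k}: at the step for car $i$ the space $p_i$ splits the already-parked cars into those with $a_j<p_i$ and those with $a_j>p_i$, while the displacement $s_i:=p_i-a_i$ records reluctance, with $s_i=0$ exactly when $C_i$ is lucky. The design requirement is that $x_{n-i+1}$ range over the full alphabet $\{0,\dots,n\}$, with its $i-1$ excedance values $\{n-i+2,\dots,n\}$ reserved for the (at most $i-1$) reluctant displacements of $C_i$ and its $n-i+2$ non-excedance values encoding positional data of a lucky car. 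A clean anchor is the all-lucky case: a parking function in which every car is lucky is exactly a permutation $\pi$, and the restriction of $\psi_A$ should be an inversion-table bijection, e.g. $\pi\mapsto(x_1,\dots,x_{n-1})$ with $x_m=\#\{\,j>n-m:\pi(j)<\pi(n-m)\,\}\le m$, mapping permutations bijectively onto the excedance-free words. The general map is then obtained by lifting each reluctant car's entry out of this sub-diagonal range into the excedance range according to $s_i$. (Alternatively, since $\varphi_A$ already carries reluctant cars to bad edges in order by Theorem \ref{thm:type-A-PF-NC}, one could instead compose $\varphi_A$ with a chain-to-word map sending bad edges to excedances.)

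Finally I would verify the three requirements: that the image always lies in $\X^A_n$ (every letter stays in $\{0,\dots,n\}$); that $x_{n-i+1}$ is an excedance if and only if $C_i$ is reluctant; and that $\psi_A$ is a bijection. Bijectivity can come either from exhibiting the inverse — reading the letters back into displacements and outcome data and reconstructing $\alpha$ car by car, in the spirit of the inverse passes of Algorithm A — or, more cheaply, from injectivity together with the counting identity above. I expect the main obstacle to be the \emph{uniform-alphabet} bookkeeping: the number of admissible reluctant displacements of $C_i$ depends on the run of occupied spaces below $p_i$ and is in general strictly smaller than the $i-1$ available excedance values, so one must check that, across all of $\PF^A_n$, the positional/outcome encoding fills the non-excedance range and the displacement encoding fills the excedance range exactly, with no collisions. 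Making this alignment hold simultaneously for every car — equivalently, proving that the proposed inverse is well defined — is the crux, and the index shift $m=n-i+1$ together with the base-case inversion table are the tools that force the ranges to line up.
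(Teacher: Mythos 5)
Your reduction to the counting identity and your design constraints (one letter per car, processed in reverse, with the $i-1$ excedance values $\{n-i+2,\dots,n\}$ reserved for car $i$'s reluctance) match the paper's setup, but the construction you sketch has a genuine gap at exactly the point you flag, and the gap is fatal to the specific encoding you chose. Encoding a reluctant car by its displacement $s_i=p_i-a_i$ cannot fill the excedance range: the set of achievable displacements depends on the block structure of the occupied spaces, and in the linear setting not every occupied space is even a legal preference for a reluctant car. For instance, with spaces $1,3$ occupied when $C_3$ arrives in $\PF^A_3$, the preference $a_3=3$ would make the car overflow past space $3$, so only one reluctant choice exists although two excedance letters must be realized. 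No per-car, configuration-independent displacement code can line up these ranges, so your planned inverse is not well defined, and the fallback ``injectivity plus counting'' is unavailable because the forward map itself is never pinned down.

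The paper closes this with two ideas absent from your proposal. First, a reluctant car is encoded not by how far it is displaced but by \emph{which} occupied space it prefers: when $C_k$ arrives the occupied spaces are exactly $p_1,\dots,p_{k-1}$, so the labeling $f(p_j)=n-j+1$ of Algorithm E1 makes $x_{n-k+1}=f(a_k)$ land in $\{n-k+2,\dots,n\}$ precisely when $C_k$ is reluctant; the excedance range is hit bijectively by construction, with no collision analysis needed. Second, the full-alphabet problem is resolved by Pollak's circular device: the vertex set is $\{0,1,\dots,n\}$ ($n+1$ spaces for $n$ cars), and the inverse Algorithm E2 first builds a \emph{circular} parking sequence $\alpha'$ with $a'_1=p'_1=0$ --- in the circular world every letter value is legal, including ``prefer an occupied space and wrap around'' --- and then rotates all preferences by $n+1-b$ modulo $n+1$, where $b$ is the unique empty space, to normalize to a genuine linear parking function. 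This final rotation is what reconciles the strictly smaller set of legal reluctant preferences in the linear picture with the full excedance range; without it, the alignment you correctly identify as the crux is exactly the unproved statement. (Your parenthetical alternative of composing $\varphi_A$ with a chain-to-word map is not what the paper does, and building that second map with bad edges going to excedances would amount to the same unaddressed work.)
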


The map $\psi_A:\PF^A_n\rightarrow\X^A_n$ is described below. We make use of a cycle with vertex set $V=\{0,1,\dots,n\}$ arranged in increasing order (clockwise). 
Given $\alpha=(a_1,\dots,a_n)\in \PF^A_n$ with outcome $(p_1,\dots,p_n)$, we construct the corresponding word $\psi_A(\alpha)=x_1\cdots x_{n-1}$ from right to left by the following procedure. When working on $x_{n-k+1}$, we first create a bijective labeling $f:V\rightarrow\{0,1,\dots,n\}$ such that $(f(p_1),f(p_2),\dots,f(p_{k-1}))=(n,n-1,\dots,n-k+2)$.

\medskip
\noindent
{\bf Algorithm E1}

For $k=2,\dots,n$, the entry $x_{n-k+1}$ is determined as follows.
\begin{enumerate}
\item  Find the vertices $p_1,\dots, p_{k-1}$ in the cycle, and create a vertex labeling by setting $f(p_i)=n-i+1$ for each $i\in [k-1]$. Then assign the labels $0,1,\dots,n-k+1$ clockwise to the remaining vertices, staring from the first unlabeled vertex after $p_1$.
\item Set $x_{n-k+1}$ to be the label that the vertex $a_k$ receives, i.e., $x_{n-k+1}=f(a_k)$.
\end{enumerate}

\medskip
\begin{exa} \label{exa:type-A-PF-to-word} {\rm
Consider the cars $C_1,\dots,C_6$ with preference list $\alpha=(2,4,2,3,5,1)\in\PF^A_6$. The outcome of $\alpha$ is $(2,4,3,5,6,1)$. As shown in Table \ref{tab:construct-type-A-word}, we have $\psi_A(\alpha)= x_1\cdots x_5=1\,3\,4\,6\,1$. Note that the excedances $x_4$, $x_3$ and $x_2$ of $\psi_A(\alpha)$ correspond to the reluctant cars $C_3$, $C_4$ and $C_5$, respectively.  Some initial steps are described below.

(1) $k=2$. We set $f(2)=6$. Then assign the labels $0,1,2,3,4,5$ to the vertices $3,4,5,6,0,1$, accordingly. By $a_2=4$, we set $x_5=f(4)=1$. Note that $C_2$ is lucky and the letter $x_5$ is not an excedance of $\psi_A(\alpha)$.

(2) $k=3$. We set $f(2)=6$ and $f(4)=5$. Then assign the labels $0,1,2,3,4$ to the vertices $3,5,6,0,1$, accordingly. By $a_3=2$, we set $x_4=f(2)=6$. Note that $C_3$ is reluctant and the letter $x_4$ is an excedance of $\psi_A(\alpha)$.
}
\end{exa}

\begin{table}[ht]
\caption{The construction of the word $\psi_A(\alpha)$ in Example \ref{exa:type-A-PF-to-word}.}
\centering
\begin{tabular}{c|cc|ccccccc|cc}
$k$ & $a_k$  & $p_k$  & $f(0)$ & $f(1)$ & $f(2)$ & $f(3)$ & $f(4)$ & $f(5)$ &  $f(6)$        & $x_{7-k}$ \\
\hline
  1  &  2 &  2 &   &   &   &   &   &     &     &      \\
  2  &  4 &  4 & 4 & 5 & 6 & 0 & 1 &  2  &  3  &     1 \\
  3  &  2 &  3 & 3 & 4 & 6 & 0 & 5 &  1  &  2  &     6 \\
  4  &  3 &  5 & 2 & 3 & 6 & 4 & 5 &  0  &  1  &     4 \\
  5  &  5 &  6 & 1 & 2 & 6 & 4 & 5 &  3  &  0  &     3 \\
  6  &  1 &  1 & 0 & 1 & 6 & 4 & 5 &  3  &  2  &     1  
\end{tabular}
\label{tab:construct-type-A-word}
\end{table}

\begin{lem}
For $2\le k\le n$, the letter $x_{n-k+1}$ is an excedance of $\psi_A(\alpha)$ if and only if the $k$th car of $\alpha$ is reluctant.
\end{lem}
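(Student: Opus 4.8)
The plan is to reduce the statement to a single bookkeeping observation about which vertices receive the ``large'' labels in Algorithm~E1. Fix $k$ with $2\le k\le n$. By step~(i) of the algorithm, the labeling $f$ assigns the labels $n,n-1,\dots,n-k+2$ to the vertices $p_1,p_2,\dots,p_{k-1}$, and then distributes the labels $0,1,\dots,n-k+1$ among the remaining $n-k+2$ vertices of $V$. The first thing I would record is therefore that a vertex $v$ satisfies $f(v)>n-k+1$ if and only if $f(v)\in\{n-k+2,\dots,n\}$, which happens precisely when $v\in\{p_1,\dots,p_{k-1}\}$. Since $x_{n-k+1}=f(a_k)$ by step~(ii), this already shows that the letter $x_{n-k+1}$ is an excedance (that is, $x_{n-k+1}>n-k+1$) if and only if $a_k\in\{p_1,\dots,p_{k-1}\}$.

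Next I would identify the condition $a_k\in\{p_1,\dots,p_{k-1}\}$ with the reluctance of the $k$th car. When the $k$th car arrives, the spaces already taken are exactly those where the first $k-1$ cars parked, namely $\{p_1,\dots,p_{k-1}\}$. The $k$th car parks at its preferred space $a_k$ precisely when that space is free, i.e.\ when $a_k\notin\{p_1,\dots,p_{k-1}\}$; equivalently, the $k$th car is lucky iff $a_k\notin\{p_1,\dots,p_{k-1}\}$ and reluctant iff $a_k\in\{p_1,\dots,p_{k-1}\}$. Here I would also note that the outcome $(p_1,\dots,p_n)$ is a permutation of $[n]$ because $\alpha\in\PF^A_n$, so $p_1,\dots,p_{k-1}$ are $k-1$ distinct elements of $V$, which is what makes the label count in the first paragraph come out exactly.

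Combining the two paragraphs yields the chain of equivalences: $x_{n-k+1}$ is an excedance $\iff a_k\in\{p_1,\dots,p_{k-1}\}\iff$ the $k$th car of $\alpha$ is reluctant. The only delicate point is the index arithmetic: one must verify that the excedance threshold $n-k+1$ in position $n-k+1$ coincides exactly with the smallest large label $n-k+2$ handed out to the $p_i$'s, so that the $k-1$ spaces occupied before the $k$th car arrives correspond bijectively to the $k-1$ labels exceeding $n-k+1$. I expect this counting match to be the crux of the argument; once it is checked, the conclusion is immediate, and in particular no analysis of the cyclic placement of the low labels $0,\dots,n-k+1$ is required.
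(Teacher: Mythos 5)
Your proposal is correct and matches the paper's proof essentially verbatim: both arguments rest on the observation that Algorithm~E1 assigns the labels $\{n,\dots,n-k+2\}$ exactly to the occupied spaces $\{p_1,\dots,p_{k-1}\}$, so $x_{n-k+1}=f(a_k)>n-k+1$ if and only if $a_k\in\{p_1,\dots,p_{k-1}\}$, which is precisely reluctance of the $k$th car. Your added remarks on the distinctness of the $p_i$ and the label count are fine but are exactly the bookkeeping the paper leaves implicit.
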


\begin{proof}
Note that the $k$th car of $\alpha$ is reluctant if and only if $a_k\in\{p_1,\dots,p_{k-1}\}$. By the vertex labeling in (i) of Algorithm E1, we have $\{f(p_1),\dots,f(p_{k-1})\}=\{n,\dots,n-k+2\}$.  By setting $x_{n-k+1}=f(a_k)$, we have $x_{n-k+1}>n-k+1$ if and only if $a_k\in\{p_1,\dots,p_{k-1}\}$.
\end{proof}

\smallskip
To find $\psi^{-1}_A$, given $\xx=x_1\cdots x_{n-1}\in\X^A_n$, we shall construct $\psi^{-1}_A(\xx)=(a_1,\dots,a_n)$ in two stages.  We also make use of the cycle with vertex set $V=\{0,1,\dots,n\}$.
In the first stage, using $n+1$ parking spaces numbered $0,1,\dots,n$ arranged in a circle, we construct a parking function $\alpha'=(a'_1,\dots,a'_n)\in\{0,1,\dots,n\}^n$ with outcome $\pi'=(p'_1,\dots,p'_n)$ such that $a'_1=p'_1=0$.   
When working on $a'_k$ and $p'_k$ for $k\ge 2$, a bijective labeling $g:V\rightarrow\{0,1,\dots,n\}$ such that $(g(n),g(n-1),\dots,g(n-k+2))=(p'_1,\dots,p'_{k-1})$ has been created. There will be a unique empty space. In the second stage, we make an adjustment so that the parking space 0 is empty.

\medskip
\noindent
{\bf Algorithm E2}

(i)  Let $a'_1=p'_1=0$. The initial vertex labeling is given by $g(i)=i+1\pmod {n+1}$ for all $0\le i\le n$. This gets $g(n)=0=p'_1$. 
\begin{itemize}
\item For $k\ge 2$, set $a'_k=g(x_{n-k+1})$, say $g(x_{n-k+1})=t$. If $x_{n-k+1}\le n-k+1$ then set $p'_k=t$. Otherwise, $x_{n-k+1}> n-k+1$ and set $p'_k$ to be the next available space, i.e., find the least positive integer $s$ such that $\ell=t+s\pmod {n+1}$ and $g^{-1}(\ell)\le n-k+1$, and then set $p'_k=\ell$.
\item Set $g(n-k+1)=p'_k$ and relabel the vertices $0,1,\dots,n-k$ in increasing order by the integers $\{0,1,\dots,n\}\setminus\{g(n-k+1),\dots,g(n)\}$.
\end{itemize}

(ii) Find the unique element in $\{0,1,\dots,n\}\setminus\{p'_1,\dots,p'_n\}$, say $b>0$. Then the corresponding parking function $\psi^{-1}_A(\xx)=(a_1,\dots,a_n)$ is obtained by setting $a_i\equiv a'_i+n+1-b\pmod {n+1}$.

\medskip
\begin{exa} \label{exa:type-A-PF-to-word-inverse} {\rm
Let $\xx= 1\,3\,4\,6\,1\in \X^A_6$. As shown in Table \ref{tab:construct-type-A-word-inverse}, we obtain $\alpha'=(0,2,0,1,3,6)$ with outcome $\pi'=(0,2,1,3,4,6)$, where the empty space is numbered by 5. Hence  the corresponding parking function of $\xx$ is $\psi^{-1}_A(\xx)=(2,4,2,3,5,1)$.
}
\end{exa}

\begin{table}[ht]
\caption{The construction of the first stage $\alpha'$ in Example \ref{exa:type-A-PF-to-word-inverse}.}
\centering
\begin{tabular}{c|c|cccccccc|ccc}
$k$ &  $x_{7-k} $ & $g(0)$ & $g(1)$ & $g(2)$ & $g(3)$ & $g(4)$ & $g(5)$ &  $g(6)$ &  &   & $a'_k$ & $p'_k$    \\
\hline
  1  &   &    &   &   &   &   &   &     &  &  &  0  &  0 \\
  2  & 1 &  1 & 2 & 3 & 4 & 5 & 6 &  0  &  &  &  2  &  2\\
  3  & 6 &  1 & 3 & 4 & 5 & 6 & 2 &  0  &  &  &  0  &  1\\
  4  & 4 &  3 & 4 & 5 & 6 & 1 & 2 &  0  &  &  &  1  &  3\\
  5  & 3 &  4 & 5 & 6 & 3 & 1 & 2 &  0  &  &  &  3  &  4\\
  6  & 1 &  5 & 6 & 4 & 3 & 1 & 2 &  0  &  &  &  6  &  6
\end{tabular}
\label{tab:construct-type-A-word-inverse}
\end{table}

\smallskip
The proof of Theorem \ref{thm:bijection-PF-Word} is completed.

\subsection{Type-$B$ words} Let $\X^B_n$ denote the set of words $\xx=x_1x_2\cdots x_{n}$ over the alphabet $\{1,\dots,n\}$.
Likewise, a letter $x_i$ is an excedance of $\xx$ if $x_i>i$, and the number of excedances of $\xx$ is $\exc(\xx)=\#\{x_i : x_i>i, 1\le i\le n\}$. By (\ref{eqn:B-polynomial}), it is easy to see that 
\begin{equation} \label{eqn:gf-exc-XB}
M(B_n,q)=\sum_{\xx\in\X^B_n} q^{\exc(\xx)}.
\end{equation}

\begin{thm} \label{thm:bijection-type-B-PF-Word}  There is a bijection $\psi_B: \alpha\mapsto \xx=x_1\cdots x_n$ of $\PF^B_n$ onto $\X^B_n$ such that for each $i\in\{1,\dots,n\}$ the $i$th car of $\alpha$ is reluctant if and only if $x_{n-i+1}$ is an excedance. 
\end{thm}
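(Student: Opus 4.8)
\medskip
The plan is to mirror the type-$A$ construction behind Theorem~\ref{thm:bijection-PF-Word}, adapting Algorithm E1 and its inverse Algorithm E2 to the circular model of type~$B$. A key simplification is available here: the type-$B$ parking circle already has exactly $n$ spaces, matching the $n$-letter alphabet, and moreover every tuple in $\{1,\dots,n\}^n$ is automatically a type-$B$ parking function. Consequently there is no analogue of the extra vertex $0$ nor of the two-stage ``cut at the empty space'' adjustment of Algorithm E2, and the argument becomes strictly shorter than its type-$A$ counterpart.

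First I would define $\psi_B$ on a cycle with vertex set $V=\{1,2,\dots,n\}$ arranged clockwise. Given $\alpha=(a_1,\dots,a_n)\in\PF^B_n$ with outcome $(p_1,\dots,p_n)$, I construct $\psi_B(\alpha)=x_1\cdots x_n$ from right to left. At step $k$ (for $k=1,\dots,n$) I set $f(p_i)=n-i+1$ for $1\le i\le k-1$, assign the remaining labels $1,2,\dots,n-k+1$ clockwise to the unoccupied vertices starting from the gate just before space~$1$, and put $x_{n-k+1}=f(a_k)$. For $k=1$ this reduces to the identity labeling $f(i)=i$, so $x_n=a_1$.

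The excedance clause then follows immediately, exactly as in the type-$A$ lemma. At step $k$ the labels exceeding $n-k+1$ are precisely $\{n-k+2,\dots,n\}=\{f(p_1),\dots,f(p_{k-1})\}$, so $x_{n-k+1}=f(a_k)>n-k+1$ if and only if $a_k\in\{p_1,\dots,p_{k-1}\}$. Since the $k$th car is reluctant exactly when its preferred space is already occupied, i.e.\ when $a_k\in\{p_1,\dots,p_{k-1}\}$, this says that $x_{n-k+1}$ is an excedance if and only if the $k$th car is reluctant, which is the asserted statistic-preservation.

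It remains to see that $\psi_B$ is a bijection, and for this I would run the algorithm backwards. Reading $x_n,x_{n-1},\dots,x_1$ in turn, the labeling $f$ at step $k$ is determined by the already-recovered sequence $(p_1,\dots,p_{k-1})$ through the same rule, so $a_k=f^{-1}(x_{n-k+1})$ is recovered uniquely; then $p_k$ is the first vacant space at or clockwise-after $a_k$ (equivalently $p_k=a_k$ when $x_{n-k+1}$ is not an excedance). This reverse pass is always well-defined, since only $k-1<n$ spaces are occupied so a vacant space always exists, and since $f$ is a bijection of $V$ onto $\{1,\dots,n\}$; its output lies in $\PF^B_n$ because $\PF^B_n=\{1,\dots,n\}^n$. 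As the forward and backward passes undo each other step by step, $\psi_B$ is a bijection. The one point deserving care---and the only real obstacle---is the consistency of the labeling across the two passes: I must check that the clockwise assignment of $1,\dots,n-k+1$ depends only on the set $\{p_1,\dots,p_{k-1}\}$ (and the order of the $p_i$ only through the top labels), so that the reconstructed $f$ at each step coincides with the one used in the forward direction. Granting this, a single worked example paralleling the type-$A$ computation makes the bookkeeping transparent and completes the proof of Theorem~\ref{thm:bijection-type-B-PF-Word}.
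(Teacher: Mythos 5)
Your proposal is correct and is essentially the paper's own proof: with your convention of assigning the small labels clockwise starting just before space $1$, your labeling $f$ yields $x_{n-k+1}=j$ when $p_k$ is the $j$th smallest of the still-vacant spaces (lucky case) and $x_{n-k+1}=n-\ell+1$ when $a_k=p_\ell$ for $\ell<k$ (reluctant case), which is word-for-word the paper's Algorithm F1, and your backward pass is its Algorithm F2. The consistency point you flag resolves immediately, since $f$ at step $k$ is a deterministic function of the already-recovered sequence $(p_1,\dots,p_{k-1})$ (the top labels use its order, the small labels only the complement of its underlying set), so the forward and backward passes reconstruct the same labeling.
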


\smallskip
We describe the map $\psi_B:\PF^B_n\rightarrow\X^B_n$. Given $\alpha=(a_1,\dots,a_n)\in \PF^B_n$ with outcome $(p_1,\dots,p_n)$, the corresponding word $\psi_B(\alpha)=x_1x_2\cdots x_n$ is constructed from right to left by the following procedure.

\smallskip
\noindent
{\bf Algorithm F1}

\begin{enumerate}
\item Set $x_n=a_1$. 
\item For $k\ge 2$, if $a_k=p_k$ then set $x_{n-k+1}=j$, where $p_k$ is the $j$th smallest element among $p_k,p_{k+1},\dots,p_n$. Otherwise, $a_k=p_{\ell}$ for some $\ell\in [k-1]$, and set $x_{n-k+1}=n-\ell+1$. 
\end{enumerate}

\smallskip

\begin{exa} \label{exa:type-B-PF-to-word} {\rm
Consider the cars $C_1,\dots,C_6$ with preference list $\alpha=(2,2,5,5,6,6)\in\PF^B_6$. The outcome of $\alpha$ is $(2,3,5,6,1,4)$. As shown in Table \ref{tab:construct-type-B-word}, the corresponding word is $\psi_B(\alpha)=x_1\cdots x_6=3\,3\,4\,3\,6\,2$.  Note that the excedances $x_5,x_3,x_2,x_1$ of $\psi_B(\alpha)$  correspond to the reluctant cars $C_2,C_4,C_5,C_6$, accordingly.
}
\end{exa}

\begin{table}[ht]
\caption{The construction of the word $\psi_B(\alpha)$ in Example \ref{exa:type-B-PF-to-word}.}
\centering
\begin{tabular}{cc|cccccc}
$k$   &     &  1  &  2  & 3  & 4  &  5  &  6   \\
\hline
$a_k$ &     &  2  &  2  & 5  & 5  &  6  &  6   \\
$p_k$ &     &  2  &  3  & 5  & 6  &  1  &  4   \\
$x_{7-k}$ & &  2  &  6  & 3  & 4  &  3  &  3   \\
\end{tabular}
\label{tab:construct-type-B-word}
\end{table}

\begin{lem} 
For $1\le k\le n$, the letter $x_{n-k+1}$ is an excedance of $\psi_B(\alpha)$ if and only if the $k$th car of $\alpha$ is reluctant.
\end{lem}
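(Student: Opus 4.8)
The plan is to show that the construction in Algorithm F1 records exactly which cars are reluctant, by analyzing the two cases of step (2) separately and relating each to the excedance condition $x_{n-k+1}>n-k+1$. First I would handle the luckiness case: if the $k$th car is lucky, then $a_k=p_k$, and Algorithm F1 sets $x_{n-k+1}=j$, where $p_k$ is the $j$th smallest among $p_k,p_{k+1},\dots,p_n$. Since these are $n-k+1$ values, we have $1\le j\le n-k+1$, so $x_{n-k+1}\le n-k+1$ and the letter is \emph{not} an excedance. This direction is a direct counting observation.

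Next I would treat the reluctant case: if the $k$th car is reluctant, then $a_k\neq p_k$, and by the parking dynamics $a_k$ must coincide with some earlier outcome, i.e.\ $a_k=p_{\ell}$ for some $\ell\in[k-1]$ (the space $a_k$ was already occupied when car $C_k$ arrived). Algorithm F1 then sets $x_{n-k+1}=n-\ell+1$. Since $1\le\ell\le k-1$, we get $n-k+2\le n-\ell+1\le n$, hence $x_{n-k+1}\ge n-k+2>n-k+1$, so the letter \emph{is} an excedance. The key point to justify here is that a reluctant car's preferred space is necessarily occupied by a car that arrived strictly earlier, so that the index $\ell$ genuinely lies in $[k-1]$; this follows immediately from the definition of the outcome and the reluctant condition.

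The two cases together establish the biconditional for each $k$ with $1\le k\le n$ (the case $k=1$ is vacuous since $C_1$ always parks at its preferred space and $x_n=a_1$ is set without an excedance test, consistent with $C_1$ being lucky). The main obstacle, such as it is, is verifying that the ranges of the two formulas in step (2) are genuinely disjoint and partition the alphabet correctly, i.e.\ that lucky cars produce only letters in $\{1,\dots,n-k+1\}$ while reluctant cars produce only letters in $\{n-k+2,\dots,n\}$; this range separation is precisely what makes the excedance test $x_{n-k+1}>n-k+1$ decide luckiness. I would present the proof by simply writing out these two range computations and invoking the characterization $a_k=p_\ell$ with $\ell\le k-1$ for reluctant cars.

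\begin{proof}
If the $k$th car is lucky, then $a_k=p_k$, and by Algorithm F1 we have $x_{n-k+1}=j$, where $p_k$ is the $j$th smallest element among the $n-k+1$ values $p_k,p_{k+1},\dots,p_n$. Thus $1\le j\le n-k+1$, so $x_{n-k+1}\le n-k+1$ and $x_{n-k+1}$ is not an excedance.

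If the $k$th car is reluctant, then $a_k\neq p_k$. Since the space $a_k$ is occupied when $C_k$ arrives, we have $a_k=p_{\ell}$ for some car $C_{\ell}$ that parked earlier, i.e.\ $\ell\in [k-1]$. By Algorithm F1, $x_{n-k+1}=n-\ell+1$. As $1\le\ell\le k-1$, we obtain $n-k+2\le x_{n-k+1}\le n$, so $x_{n-k+1}>n-k+1$ and $x_{n-k+1}$ is an excedance. The result follows.
\end{proof}
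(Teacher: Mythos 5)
Your proof is correct and follows essentially the same two-case range analysis as the paper: lucky cars yield $x_{n-k+1}\le|\{p_k,\dots,p_n\}|=n-k+1$, while reluctant cars yield $x_{n-k+1}=n-\ell+1>n-k+1$ with $\ell\in[k-1]$. Your explicit justification that a reluctant car's preferred space must be occupied by an earlier car (so $\ell\le k-1$), and your remark on the $k=1$ case, only make explicit what the paper leaves implicit.
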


\begin{proof}
It is obvious that $x_n$ is not an excedance of $\xx$, just as the first car of $\alpha$ is always lucky. For $k\ge 2$, the $k$th car $C$ of $\alpha$ is lucky if $a_k=p_k$. In this case, by (ii) of Algorithm F1, $x_{n-k+1}\le |\{p_k,\dots,p_n\}|= n-k+1$. On the other hand, $C$ is reluctant if $a_k=p_{\ell}$ for some $\ell\in [k-1]$. In this case, $x_{n-k+1}=n-\ell+1>n-k+1$. Thus, the result follows. 
\end{proof}

\smallskip
To find $\psi^{-1}_B$, given a word $\xx= x_1\cdots x_n\in\X^B_n$, we construct $\psi^{-1}_B(\xx)=(a_1,\dots,a_n)$ by the following procedure. 
Recall that $\overline{j} \pmod n$ stands for the integer congruent to $j$ mod $n$ with $1\le \overline{j}\le n$.

\smallskip
\noindent
{\bf Algorithm F2}

\begin{enumerate}
\item Set $a_1=p_1=x_n$. 
\item For $k\ge 2$, if $x_{n-k+1}$ is not an excedance of $\xx$, say $x_{n-k+1}=j\le n-k+1$, we set $a_k$ to be the $j$th smallest element in $\{1,\dots,n\}\setminus \{p_1,\dots,p_{k-1}\}$, and set $p_k=a_k$.
Otherwise, $x_{n-k+1}$ is an excedance, say $x_{n-k+1}=\ell>n-k+1$. Then we set $a_k=p_{n-\ell+1}$, say $p_{n-\ell+1}=t$, and set $p_k$ to be the next available space, i.e., find the least integer $s$ such that $r=\overline{t+s}\pmod n$ and $r\in\{1,\dots,n\}\setminus\{p_1,\dots,p_{k-1}\}$, and then set $p_k=r$.
\end{enumerate}

\smallskip
The proof of Theorem \ref{thm:bijection-type-B-PF-Word} is completed.

\section{Gamma-positivity of $M(B_n,q)$}
In this section we prove the $\gamma$-positivity of the polynomial $M(B_n,q)$ in terms of excedance number of the words in $\X^B_n$. 

\begin{thm} \label{thm:gamma-positive-for-words} For all $n\ge 1$, we have
\begin{equation} \label{eqn:gamma-nj-coefficient}
M(B_n,q)= \sum_{j=0}^{\lfloor (n-1)/2\rfloor} \gamma_{n,j} q^j(1+q)^{n-1-2j},
\end{equation}
where $\gamma_{n,j}$ is the number of words $\xx=x_1\cdots x_n\in\X^B_n$ with $\exc(\xx)=j$ such that $x_1=1$ and $\xx$ contains no two consecutive excedances.
\end{thm}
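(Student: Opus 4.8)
The plan is to prove the identity by producing an explicit group action on $\X^B_n$ whose orbits realize the basic blocks $q^j(1+q)^{n-1-2j}$. Since $M(B_n,q)=\sum_{\xx\in\X^B_n}q^{\exc(\xx)}$ by (\ref{eqn:gf-exc-XB}) and this polynomial is palindromic of degree $n-1$ (as one checks from (\ref{eqn:B-polynomial}) by sending $q\mapsto 1/q$ and reindexing $k\mapsto n-k$), a $\gamma$-expansion of the stated shape already exists; the real content is to identify $\gamma_{n,j}$ with the number of words having $x_1=1$, no two consecutive excedances, and $\exc=j$. First I would record, for each word $\xx$, its set of excedance positions $\mathrm{Exc}(\xx)\subseteq\{1,\dots,n-1\}$ (position $n$ is never an excedance since $x_n\le n$), and single out the target representatives: words $\rr$ with $x_1=1$ for which $\mathrm{Exc}(\rr)$ avoids $1$ and contains no two consecutive integers.

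Next I would attach to each representative $\rr$ its \emph{free positions}, namely the indices in $\{1,\dots,n-1\}$ that are neither excedances of $\rr$ nor immediately to the left of an excedance of $\rr$. If $\rr$ has $j$ excedances then, because they are pairwise non-adjacent and none equals $1$, the excedances together with their left neighbours occupy $2j$ distinct indices of $\{1,\dots,n-1\}$, leaving exactly $n-1-2j$ free positions. The goal is a $(\mathbb{Z}/2)^{\,n-1-2j}$-action that fixes $x_n$, fixes the $j$ excedances and their left neighbours, and toggles each free position between a "low" (non-excedance) and a "high" (excedance) state, so that each orbit consists of $2^{\,n-1-2j}$ words with generating polynomial $q^{\,j}(1+q)^{\,n-1-2j}$; summing over orbits then gives the theorem, with $\gamma_{n,j}$ counting the representatives having $j$ excedances.

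The hard part will be defining the toggles. At position $i$ there are $i$ admissible non-excedance values $\{1,\dots,i\}$ but $n-i$ excedance values $\{i+1,\dots,n\}$, so for $i>n/2$ one cannot flip a single free letter by an independent local bijection; the involutions must act globally, coupling positions $i$ and $n-i$ (a valley-hopping rather than a coordinatewise flip). I expect this coupling to be precisely the mechanism behind the factorwise identity $(i+(n-i)q)\big((n-i)+iq\big)=i(n-i)(1+q)^2+(n-2i)^2q$, which already yields $\gamma$-positivity: pairing the factors $i\leftrightarrow n-i$ in (\ref{eqn:B-polynomial}) and setting $z=q/(1+q)^2$ gives $\sum_j\gamma_{n,j}z^j=n!\sum_{T}\big(\prod_{i\in T}\tfrac{(n-2i)^2}{i(n-i)}\big)z^{|T|}$, the sum over subsets $T$ of $\{1,\dots,\lfloor(n-1)/2\rfloor\}$. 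As a cross-check, and as the bridge to Theorem \ref{thm:gamma-positive}, I would verify that a direct count of the representatives gives $\gamma_{n,j}=n!\sum_{(b_1,\dots,b_j)\in B(n,j)}\prod_i\frac{n-b_i}{b_i}$ (each non-excedance position $i$ contributes $i$ choices, each excedance $b_i$ contributes $n-b_i$, and $\prod_{i=2}^{n}i/\prod_i b_i=n!/\prod_i b_i$). Since both formulas must equal the unique $\gamma$-coefficient, reconciling them is exactly the "no-two-consecutive equals product" identity that the global action is designed to encode, and establishing the action is what I would regard as the crux of the proof.
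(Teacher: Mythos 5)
Your two boundary computations are sound, and they reproduce the two endpoints of the paper's argument: pairing the factors $k\leftrightarrow n-k$ in (\ref{eqn:B-polynomial}) via $\bigl(k(n-k)(1+q)^2+(n-2k)^2q\bigr)$ to extract $\sum_j\gamma_{n,j}z^j=n!\sum_{T}\bigl(\prod_{i\in T}\tfrac{(n-2i)^2}{i(n-i)}\bigr)z^{|T|}$ is exactly the content of Lemma \ref{lem:gamma-nj} (including the parity bookkeeping for even $n$, where the middle factor supplies the extra $(1+q)$), and your direct count of the representatives with excedance set $A=\{b_1<\cdots<b_j\}$ (note that $x_1=1$ is the same condition as $1\notin\mathrm{Exc}(\xx)$) reproduces the excedance-set weight $f(A)=n!\prod_{b\in A}\tfrac{n-b}{b}$ of (\ref{eqn:g(A)}) and hence the formula (\ref{eqn:calculation}). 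But there is a genuine gap at exactly the point you yourself flag as the crux: the $(\mathbb{Z}/2)^{\,n-1-2j}$-action is never constructed, and without it nothing identifies the sparse-excedance-set count with the $\gamma$-coefficients. What remains unproved is precisely the identity $\sum_{A\in B(n,j)}\prod_{b\in A}\tfrac{n-b}{b}=\sum_{|T|=j}\prod_{i\in T}\tfrac{(n-2i)^2}{i(n-i)}$ (sum over $T\subseteq\{1,\dots,\lfloor(n-1)/2\rfloor\}$), and this is not a formal manipulation. Your own observation shows why no routine construction exists: at a free position $i>n/2$ there are only $n-i<i$ excedance values, so no coordinatewise involution toggles excedance status bijectively, and the hoped-for global coupling of positions $i$ and $n-i$ is stated as an expectation, not a definition. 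A Foata--Strehl-style valley-hopping action is far from automatic here, since the letters of $\X^B_n$ are unconstrained repetitions rather than permutation values, and the orbits would need to have size exactly $2^{\,n-1-2j}$ with binomially distributed excedance numbers --- which is the whole theorem.

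For comparison, the paper closes this gap analytically rather than bijectively: Lemma \ref{lem:gf-Fnk} encodes both $M(B_n,q)$ and $\sum_{\xx\in\F_{n}}q^{\exc(\xx)}$ as determinants built from the weights $f(A)$, following the method of \cite[Eq.\,(12.4)]{Gessel-Stanley} (putting zeros on the diagonal is what imposes the sparseness conditions $b_1>1$ and $b_{j+1}-b_j>1$), and then evaluates the second determinant in closed form via Chu's tridiagonal formula (Theorem \ref{thm:Chu}); the resulting product, after pairing the conjugate factors in $\sqrt{1-4q}$, matches Lemma \ref{lem:gamma-nj} exactly. If you want to complete your route without inventing the action, you could substitute such a determinant evaluation, or prove the displayed identity by showing both sides satisfy the same recurrence in $n$; as written, however, the argument is incomplete. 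Constructing the toggling action would in fact be a stronger, genuinely bijective result going beyond the paper's proof --- but it is the missing step, not a detail.
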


The generating function for the $\gamma$-coefficients $\gamma_{n,j}$ can be expressed as follows.

\begin{lem} \label{lem:gamma-nj}
We have
\begin{equation} \label{eqn:gamma-nj}
\sum_{j=0}^{\lfloor (n-1)/2\rfloor} \gamma_{n,j} q^j=\begin{cases}
{\displaystyle n\cdot\prod_{k=1}^{\frac{n-1}{2}} \big(k(n-k)+(n-2k)^2q\big)} &\mbox{if $n$ is odd;} \\[3ex]
{\displaystyle \frac{n^2}{2}\cdot\prod_{k=1}^{\frac{n-2}{2}} \big(k(n-k)+(n-2k)^2q\big)} &\mbox{if $n$ is even.} 
\end{cases}
\end{equation}
\end{lem}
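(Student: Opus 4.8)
The plan is to read off the $\gamma$-generating function directly from the product formula (\ref{eqn:B-polynomial}) for $M(B_n,q)$, rather than counting the words of Theorem \ref{thm:gamma-positive-for-words} by hand. By Theorem \ref{thm:gamma-positive-for-words} we already know that $M(B_n,q)=\sum_{j}\gamma_{n,j}\,q^j(1+q)^{n-1-2j}$, and since a palindromic polynomial admits a \emph{unique} $\gamma$-expansion, it suffices to exhibit a polynomial $R_n(q)=\sum_j c_{n,j}q^j$ (the right-hand side of (\ref{eqn:gamma-nj})) for which $\sum_j c_{n,j}q^j(1+q)^{n-1-2j}=M(B_n,q)$; then $c_{n,j}=\gamma_{n,j}$ and the lemma follows.

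The engine of the argument is an elementary factor-pairing identity. Writing $a=n-k$ and $b=k$, one checks $(n-k+kq)(k+(n-k)q)=k(n-k)(1+q^2)+\big((n-k)^2+k^2\big)q=k(n-k)(1+q)^2+(n-2k)^2q$, using $(n-k)^2+k^2-2k(n-k)=(n-2k)^2$. In $M(B_n,q)=\prod_{k=0}^{n-1}(n-k+kq)$ I would pair the factor indexed by $k$ with the one indexed by $n-k$: since the factor at $n-k$ is exactly $k+(n-k)q$, each such pair collapses, via the identity above, to $k(n-k)(1+q)^2+(n-2k)^2q$.

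Next I would bookkeep the leftover factors. The index $k=0$ always contributes the constant $n$. When $n$ is odd the remaining indices $1,\dots,n-1$ split into the $(n-1)/2$ pairs $\{k,n-k\}$ with $1\le k\le (n-1)/2$; when $n$ is even the middle index $k=n/2$ is self-paired and contributes $\tfrac{n}{2}(1+q)$, while the other indices form the $(n-2)/2$ pairs with $1\le k\le (n-2)/2$. Substituting $t=q/(1+q)^2$ (equivalently $(1+q)^2t=q$) into the candidate product and multiplying by $(1+q)^{n-1}$ then reproduces $M(B_n,q)$ exactly: each factor $k(n-k)+(n-2k)^2t$ absorbs one $(1+q)^2$ to become $k(n-k)(1+q)^2+(n-2k)^2q$, the $(n-1)/2$ (resp. $(n-2)/2$) pairs supply $(1+q)^{n-1}$ (resp. $(1+q)^{n-2}$, with the final $(1+q)$ coming from the even-$n$ middle factor), and the constants multiply to $n$ (resp. $n^2/2$). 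This is precisely the verification $\sum_j c_{n,j}q^j(1+q)^{n-1-2j}=(1+q)^{n-1}R_n\!\big(q/(1+q)^2\big)=M(B_n,q)$ demanded above.

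There is no serious obstacle: the whole proof is a one-line algebraic identity plus careful indexing. The only points demanding care are (i) checking the pairing identity and the parity split so that the powers of $(1+q)$ and the leading constants balance in both cases, and (ii) justifying the passage from "$R_n$ reproduces the $\gamma$-expansion'' to "$R_n=\sum_j\gamma_{n,j}q^j$'', which rests on the uniqueness of the $\gamma$-expansion of a palindromic polynomial. (Alternatively one can bypass Theorem \ref{thm:gamma-positive-for-words}: the definition of $\gamma_{n,j}$ there, counting words with $x_1=1$ and no two consecutive excedances, yields by independence of the positional choices the formula $\sum_j\gamma_{n,j}q^j=n!\sum_S q^{|S|}\prod_{i\in S}\frac{n-i}{i}$ summed over non-consecutive $S\subseteq\{2,\dots,n-1\}$, and the same pairing identity evaluates this weighted independence polynomial; but routing through the product formula is shorter.)
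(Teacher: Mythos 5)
Your core computation is exactly the paper's proof of this lemma: the paper pairs the factor $(n-k)+kq$ of (\ref{eqn:B-polynomial}) with $k+(n-k)q$, uses the same identity $\big((n-k)+kq\big)\big(k+(n-k)q\big)=k(n-k)(1+q)^2+(n-2k)^2q$, pulls out the constant $n$ from $k=0$ and the factor $\frac{n}{2}(1+q)$ from the self-paired index $k=n/2$ when $n$ is even, and then reads off the $\gamma$-coefficients by ``collecting the terms with $q^j(1+q)^{n-1-2j}$'' --- which is your substitution $(1+q)^{n-1}R_n\big(q/(1+q)^2\big)=M(B_n,q)$ in disguise, both resting on the uniqueness of the $\gamma$-expansion of a palindromic polynomial.

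One caveat about your framing: you open by invoking Theorem \ref{thm:gamma-positive-for-words}, but in the paper that theorem is proved \emph{from} this lemma (together with the determinant evaluation of Lemma \ref{lem:gf-Fnk}(ii) via Chu's formula), so citing it here is circular in the paper's deductive order. The fix is already inside your own argument: drop the citation, read $\gamma_{n,j}$ as the coefficients of the unique $\gamma$-expansion of $M(B_n,q)$, and your pairing computation plus uniqueness proves the lemma outright --- precisely what the paper does. Also, your closing parenthetical overreaches: the expression $n!\sum_S q^{|S|}\prod_{i\in S}\frac{n-i}{i}$, summed over non-consecutive $S\subseteq\{2,\dots,n-1\}$, is a correct description of the word count, but it is \emph{not} evaluated by ``the same pairing identity''; evaluating that weighted independence polynomial of a path is exactly the hard step the paper delegates to the tridiagonal determinant of Lemma \ref{lem:gf-Fnk}(ii) and Chu's Theorem \ref{thm:Chu}, so that alternative route is not a shortcut.
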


\begin{proof}
Using (\ref{eqn:B-polynomial}), express $M(B_n,q)$ as a product of the following factors 
\begin{align*}
\big((n-k)+kq\big)\big(k+(n-k)q\big) 
&= k(n-k)+\big(k^2+(n-k)^2\big)q+k(n-k)q^2 \\
&= k(n-k)(1+q)^2+(n-2k)^2q.
\end{align*}
We have
\begin{align*}
M(B_n,q) &= \prod_{k=0}^{n-1} \big((n-k)+kq \big) \\
&=n\cdot\prod_{k=1}^{n-1} \big((n-k)+kq\big) \\
&=\begin{cases}
{\displaystyle n\cdot\prod_{k=1}^{\frac{n-1}{2}} \big((n-k)+kq\big)\big(k+(n-k)q\big)} &\mbox{if $n$ is odd;} \\[2ex]
{\displaystyle n\left(\frac{n}{2}+\frac{nq}{2}\right)\prod_{k=1}^{\frac{n-2}{2}} \big((n-k)+kq\big)\big(k+(n-k)q\big)} &\mbox{if $n$ is even.} 
\end{cases} \\
&=\begin{cases}
{\displaystyle n\cdot\prod_{k=1}^{\frac{n-1}{2}} \big(k(n-k)(1+q)^2+(n-2k)^2q\big)} &\mbox{if $n$ is odd;} \\[2ex]
{\displaystyle \frac{n^2}{2}(1+q)\prod_{k=1}^{\frac{n-2}{2}} \big(k(n-k)(1+q)^2+(n-2k)^2q\big)} &\mbox{if $n$ is even.} 
\end{cases}
\end{align*}
Thus, the result follows from collecting the terms with $q^j(1+q)^{n-1-2j}$.
\end{proof}

Let $\F_{n}\subset \X^B_n$ be the set of words $\xx=x_1\cdots x_n$ such that $x_1=1$ and $\xx$ contains no two consecutive excedances. We present a determinantal expression of the generating function for the words in $\X^B_n$ ($\F_{n}$, respectively) with respect to excedance numbers, using a method in \cite[Eq.\,(12.4)]{Gessel-Stanley}.

\medskip
\begin{lem} \label{lem:gf-Fnk} For $1\le j\le n-1$, let $m_j=(n-j)q/j$. The following results hold.
\begin{enumerate}
\item We have
\begin{equation} \label{eqn:right-hand-side}
M(B_n,q)= n!\cdot
\det
\begin{bmatrix}
m_1  & m_2  & m_3  &  \cdots & m_{n-1} & 1 \\
-1   & m_2  & m_3  &  \cdots & m_{n-1} & 1 \\
   & -1   & m_3    &  \cdots & m_{n-1} & 1 \\
   &      & \ddots  & \ddots     & \vdots  & \vdots \\
   &      &      &   -1     & m_{n-1}  &  1 \\
   &      &      &          & -1 &  1   
\end{bmatrix}.
\end{equation}
\item We have
\begin{equation} \label{eqn:determinant-form}
\sum_{\xx\in \F_{n}} q^{\exc(\xx)}=
n!\cdot
\det
\begin{bmatrix}
0  & m_2  & m_3  &  \cdots & m_{n-1} & 1 \\
-1 & 0    & m_3  &  \cdots & m_{n-1} & 1 \\
   & -1   & 0    &  \cdots & m_{n-1} & 1 \\
   &      & \ddots  & \ddots     & \vdots  & \vdots \\
   &      &      &   -1     & 0  &  1 \\
   &      &      &          & -1 &  1   
\end{bmatrix}.
\end{equation}
\end{enumerate}
\end{lem}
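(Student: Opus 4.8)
The plan is to prove both identities at once by first expanding the almost-triangular (upper Hessenberg) determinant on each right-hand side combinatorially, in the spirit of \cite{Gessel-Stanley}, and then comparing the resulting sum against a direct excedance-by-excedance enumeration of the relevant words. The key observation is that the two matrices have a common shape: the entry in position $(r,s)$ equals $m_s$ for $r<s\le n-1$, equals $1$ for $s=n$, equals $-1$ on the subdiagonal $s=r-1$, and vanishes for $s<r-1$; they differ only along the diagonal, which carries $m_r$ in $(i)$ and $0$ in $(ii)$. So I would prove one expansion lemma covering both diagonals and then specialize.

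For the expansion, I would write $\det A=\sum_{\sigma}\mathrm{sgn}(\sigma)\prod_r A_{r,\sigma(r)}$ and note that a term survives only when $\sigma(r)\ge r-1$ for every $r$; such permutations are exactly those whose cycles are supported on consecutive blocks $[a,b]$, each acting as the single cycle $(a,b,b-1,\dots,a+1)$. A block $[a,b]$ with $b>a$ contributes entry product $A_{a,b}\prod_{r=a+1}^{b}A_{r,r-1}=m_b(-1)^{b-a}$ against a cycle sign $(-1)^{b-a}$, hence net weight $m_b$ (or $1$ when $b=n$), whereas a singleton $\{a\}$ contributes the diagonal entry. Thus $\det A$ is a sum over compositions of $[n]$ into consecutive blocks, each block weighted by the matrix entry at its top-right corner. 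In case $(i)$ the diagonal agrees with the column entries, so a block's weight depends only on its right endpoint and the sum telescopes to $\det=\prod_{j=1}^{n-1}(1+m_j)$; writing $m_j=(n-j)q/j$ and using the elementary identity $n!\prod_{j\in E}\tfrac{n-j}{j}=\prod_{j\in E}(n-j)\prod_{j\notin E}j$ (product over $\{1,\dots,n\}\setminus E$) gives $n!\det=\sum_{E\subseteq[n-1]}q^{|E|}\prod_{j\in E}(n-j)\prod_{j\notin E}j$. This is precisely $\sum_{\xx\in\X^B_n}q^{\exc(\xx)}=M(B_n,q)$ by (\ref{eqn:gf-exc-XB}), since a word is built by choosing at each position $i$ one of the $n-i$ excedance values (a factor $q$) or one of the $i$ non-excedance values, with excedance set $E$ ranging over all subsets of $[n-1]$.

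Part $(ii)$ carries the real content. Zeroing the diagonal (apart from the forced $1$ in position $(n,n)$) annihilates every composition using an isolated singleton other than $\{n\}$, so the surviving compositions are exactly those whose blocks all have size at least $2$, possibly together with a trailing singleton $\{n\}$. Recording the block right-endpoints below $n$ as $E=\{b_1<\dots<b_s\}$, the size conditions become $1<b_1$, $b_{i+1}-b_i\ge 2$, and $b_s<n$ --- i.e. $E$ ranges over $B(n,s)$ --- whence $\det=\sum_{E}q^{|E|}\prod_{i\in E}\tfrac{n-i}{i}$. On the other side I would enumerate the words in $\F_n$ by excedance set: the requirements $x_1=1$ and ``no two consecutive excedances'' force the excedance set to satisfy the very same constraints, and the number of words with a prescribed such $E$ is $\prod_{i\in E}(n-i)\prod_{i\notin E}i$. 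The same factorial identity then matches the two sums term by term, giving $n!\det=\sum_{\xx\in\F_n}q^{\exc(\xx)}$, as claimed.

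I expect the delicate step to be the first expansion: proving rigorously that only the interval-cycle permutations contribute and that the cycle signs collapse so each block is weighted simply by its top-right entry, together with the careful bookkeeping of which compositions survive once the diagonal is zeroed. Everything downstream --- both specializations --- then reduces to the single factorial identity above and the independent excedance-set counts of $\X^B_n$ and $\F_n$, so the argument is self-contained and needs no appeal to the later $\gamma$-coefficient results.
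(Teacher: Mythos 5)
Your proposal is correct and follows essentially the same route as the paper's proof: the paper likewise expands both upper Hessenberg determinants by the Gessel--Stanley permutation expansion (only interval cycles $(b,b-1,\dots,a)$ contribute, the $-1$ subdiagonal cancels the cycle signs so each block is weighted by its top-right entry), matches the result against the excedance-set count $f(A)=n!\prod_{b\in A}\frac{n-b}{b}$ for words in $\X^B_n$, and obtains (ii) by noting that the zeroed diagonal kills singleton blocks below $n$, forcing exactly the conditions $b_1>1$ and $b_{j+1}-b_j>1$ that characterize the excedance sets of words in $\F_n$. The only cosmetic difference is your telescoping observation $\det=\prod_{j=1}^{n-1}(1+m_j)$ in part (i), which the paper bypasses by equating the determinant directly with $\sum_{A\subseteq[n-1]}f(A)q^{|A|}$ and invoking (\ref{eqn:gf-exc-XB}).
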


\begin{proof}
(i) For any $A\subset [n-1]$, let $f(A)$ be the number of words $\xx\in\X^B_n$ with excedance set $A$. If $A=\{b_1< b_2<\cdots< b_\ell\}$, we observe that
\begin{equation} \label{eqn:g(A)}
f(A)=n!\cdot\prod_{j=1}^{\ell} \left(\frac{n-b_j}{b_j}\right).
\end{equation}

Suppose that $M=(m_{ij})$ is an $n\times n$ matrix for which $m_{ij}=0$ if $j<i-1$. By the permutation expansion for determinants, if $\prod_{i=1}^n m_{i,\sigma(i)}\neq 0$ then every cycle of $\sigma$ is of the form $(b, b-1,\dots, a+1, a)$. Moreover, if $m_{i,i-1}=1$ for $2\le i\le n$ then the contribution to $\det M$ from the permutation $\sigma=(b_1,b_1-1,\dots,2,1)(b_2,\dots,b_1+1)\cdots(b_{\ell+1},\dots,b_{\ell}+1)$, where $1\le b_1<b_2<\cdots<b_{\ell+1}=n$, is $(-1)^{n-\ell-1} m_{1,b_1}m_{b_1+1,b_2}\cdots m_{b_{\ell}+1,n}$. Thus, if we set $m_{i,i-1}=-1$ for $2\le i\le n$, $m_{ij}=(n-j)q/j$ for $1\le i\le j\le n-1$ and $m_{i,n}=1$ for $1\le i\le n$, we derive
\begin{equation} \label{eqn:det(M)}
\begin{aligned} 
n!\cdot\det M &=\sum_{A\subset [n-1]} f(A)q^{|A|} \\
       &=\sum_{\xx\in\X^B_n} q^{\exc(\xx)}.
\end{aligned}
\end{equation}
By (\ref{eqn:gf-exc-XB}) and (\ref{eqn:det(M)}), the assertion (i) follows. 

(ii) If we set further $m_{i,i}=0$ for $1\le i\le n-1$ then the excedance sets $A=\{b_1<b_2<\cdots<b_{\ell}\}\subset [n-1]$ that contribute $m_{1,b_1}m_{b_1+1,b_2}\cdots m_{b_{\ell}+1,n}\neq 0$ to $\det M$ satisfy $b_1>1$ and $b_{j+1}-b_j>1$ for each $j\in [\ell-1]$. Thus, the assertion (ii) follows.
\end{proof}

\medskip
To evaluate the determinant in (\ref{eqn:determinant-form}), we make use of Chu's formula for the determinants of a family of tridiagonal matrices \cite{Chu}.

\medskip
\begin{thm}[Chu] \label{thm:Chu} Let
\begin{equation} \label{eqn:Chu-matrix}
f_n(x,y;u,v):=\det
\begin{bmatrix}
x  & u      &  0     &  0  & \cdots  &    0    & 0  & 0\\
nv & x+y    & 2u     &  0  & \cdots  &    0    & 0  & 0\\
0  & (n-1)v & x+2y   &  3u & \cdots  &    0    & 0  & 0\\
\vdots & \vdots & \vdots   & \ddots  &   \ddots &  \ddots & \vdots      & \vdots \\
0  & 0      &    0    &  0  &\cdots  & 2v  &  x+(n-1)y & nu \\
0  & 0      &    0    &  0  &\cdots  &  0  &  v        & x+ny   
\end{bmatrix}.
\end{equation}
We have 
\begin{equation}
f_n(x,y;u,v)=\prod_{k=0}^{n} \left(x+\frac{ny}{2}+\frac{n-2k}{2}\sqrt{y^2+4uv} \right).
\end{equation}
\end{thm}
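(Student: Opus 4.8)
The plan is to diagonalize the matrix in (\ref{eqn:Chu-matrix}) by recognizing it as an element of a three-dimensional Lie algebra acting in an irreducible representation. Writing the matrix as an operator $M$ on the $(n+1)$-dimensional space with basis $e_0,\dots,e_n$, a direct reading of the columns gives $M e_j = ju\,e_{j-1}+(x+jy)e_j+(n-j)v\,e_{j+1}$, so that $M=xI+yN+uA+vB$, where $N e_j=j e_j$, $A e_j=j e_{j-1}$ and $B e_j=(n-j)e_{j+1}$. First I would verify the commutation relations: setting $H:=nI-2N$, one checks $[A,B]=H$, $[H,A]=2A$ and $[H,B]=-2B$, so that $A,B,H$ form an $\mathfrak{sl}_2$-triple and $e_0,\dots,e_n$ realize the $(n+1)$-dimensional irreducible representation $\rho_n$ of $\mathfrak{sl}_2$, in which $H$ acts diagonally with eigenvalues $n-2k$ for $k=0,\dots,n$.

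Next I would rewrite $M=(x+\tfrac{ny}{2})I+L$, where $L:=uA+vB-\tfrac{y}{2}H$ is a single traceless element of $\mathfrak{sl}_2$. The heart of the argument is to compute the spectrum of $L$. In the two-dimensional fundamental representation $L$ is the matrix $\left[\begin{smallmatrix}-y/2 & u\\ v & y/2\end{smallmatrix}\right]$, whose eigenvalues are $\pm\tfrac{1}{2}\sqrt{y^2+4uv}$. Hence, provided $y^2+4uv\neq0$, the element $L$ is regular semisimple and is $\mathrm{Ad}(SL_2)$-conjugate to $\lambda H$ with $\lambda=\tfrac{1}{2}\sqrt{y^2+4uv}$, since two regular semisimple elements of $\mathfrak{sl}_2$ with the same characteristic polynomial lie in a single $SL_2$-orbit. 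I would then transport this conjugacy through $\rho_n$: if $gLg^{-1}=\lambda H$ in $\mathfrak{sl}_2$, then $\rho_n(g)\,\rho_n(L)\,\rho_n(g)^{-1}=\lambda\,\rho_n(H)$, so $L$ and $\lambda H$ act with the same spectrum in the big representation. As $\rho_n(H)$ has eigenvalues $n-2k$, the operator $L$ has eigenvalues $\tfrac{n-2k}{2}\sqrt{y^2+4uv}$, and therefore $M$ has eigenvalues $x+\tfrac{ny}{2}+\tfrac{n-2k}{2}\sqrt{y^2+4uv}$ for $k=0,\dots,n$. Taking the product of the eigenvalues yields the claimed value of $f_n(x,y;u,v)$.

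Finally I would dispose of the excluded locus $y^2+4uv=0$: pairing the factor indexed by $k$ with the one indexed by $n-k$ turns the product into a product of quantities $\bigl(x+\tfrac{ny}{2}\bigr)^2-\bigl(\tfrac{n-2k}{2}\bigr)^2(y^2+4uv)$, showing that the right-hand side is a genuine polynomial in $x,y,u,v$; since it agrees with $\det$ on the Zariski-dense set $y^2+4uv\neq0$, the identity holds everywhere.

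The main obstacle I anticipate is making the conjugacy-transfer step rigorous. The subtlety is that equality of spectra in the fundamental representation does not by itself control the big representation; what is needed is that the conjugating element genuinely arises from the group $SL_2$ via the adjoint action, so that applying $\rho_n$ preserves conjugacy. The cleanest way to secure this is to exhibit $g\in SL_2$ explicitly—diagonalizing the $2\times2$ matrix and rescaling it to determinant one—rather than merely invoking equality of eigenvalues; once $g$ is in hand, the transfer through $\rho_n$ is immediate.
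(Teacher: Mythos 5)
Your proposal is correct, but it is worth noting that the paper itself contains no proof of this statement: Theorem~\ref{thm:Chu} is imported verbatim from Chu's paper \cite{Chu}, whose own argument is elementary, proceeding through the three-term recurrence $D_k=(x+ky)D_{k-1}-k(n-k+1)uv\,D_{k-2}$ for the leading principal minors and a Fibonacci-polynomial-style induction that verifies the closed product form. Your route is genuinely different and is the classical Lie-theoretic explanation of Sylvester--Kac-type spectra: your reading of the columns, $Me_j=ju\,e_{j-1}+(x+jy)e_j+(n-j)v\,e_{j+1}$, is accurate, the commutation relations $[A,B]=H$, $[H,A]=2A$, $[H,B]=-2B$ with $H=nI-2N$ check out, and the reduction to $M=\bigl(x+\tfrac{ny}{2}\bigr)I+L$ with $L=uA+vB-\tfrac{y}{2}H$ regular semisimple (for $y^2+4uv\neq 0$, verified faithfully in the two-dimensional representation) is sound. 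The conjugacy-transfer step you flag as the main obstacle is handled correctly by your own prescription: over $\mathbb{C}$ one diagonalizes the $2\times 2$ matrix and rescales the conjugator to determinant one (possible since $c^2=1/\det g$ is solvable in $\mathbb{C}$), and the transfer $\rho_n(gLg^{-1})=\rho_n(g)\rho_n(L)\rho_n(g)^{-1}$ is legitimate because the $(n+1)$-dimensional irreducible algebra representation is the differential of a group representation of $SL_2(\mathbb{C})$ --- concretely $\mathrm{Sym}^n$ of the defining representation, or abstractly because $SL_2(\mathbb{C})$ is simply connected; that is the one standard fact you should cite explicitly. The Zariski-density patch for the locus $y^2+4uv=0$ is also correct, since pairing $k$ with $n-k$ exhibits the right-hand side as the polynomial $\prod_k\bigl(\bigl(x+\tfrac{ny}{2}\bigr)^2-\tfrac{(n-2k)^2}{4}(y^2+4uv)\bigr)$ (times the middle factor $x+\tfrac{ny}{2}$ when $n$ is even), which is also independent of the branch of the square root. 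Comparing the two routes: Chu's recurrence argument is self-contained, avoids algebraically closed fields and square roots, and is what the paper relies on; your argument buys more --- it computes the entire spectrum of the matrix (hence the characteristic polynomial, not just the determinant) and explains conceptually why the eigenvalues form an arithmetic progression, namely they are the weights $n-2k$ of the irreducible $\mathfrak{sl}_2$-module scaled by $\tfrac{1}{2}\sqrt{y^2+4uv}$.
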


\medskip
To prove Theorem \ref{thm:gamma-positive-for-words}, it suffices to show that the generating function for $\gamma_{n,j}$ in Lemma \ref{lem:gamma-nj} coincides with the polynomial in Lemma \ref{lem:gf-Fnk}(ii).

\medskip
\noindent
\emph{Proof of Theorem \ref{thm:gamma-positive-for-words}.}  By Lemma \ref{lem:gf-Fnk}(ii), we have
\begin{align}
\sum_{\xx\in \F_{n}} q^{\exc(\xx)}&=n!\cdot
\det
\begin{bmatrix}
1  & m_2  &      &    &         &   \\
-1 & 1    & m_3  &    &         &   \\
   & -1   & 1    & \ddots     &    &   \\
   &      & -1   & \ddots     & m_{n-2}   &   \\
   &      &      & \ddots     & 1  &  m_{n-1} \\
   &      &      &            & -1 &  1   
\end{bmatrix}_{(n-1)\times(n-1)} \label{eqn:D3} \\
&=n\cdot
\det
\begin{bmatrix}
1  & (n-2)q  &           &         &         &\\
-1 & 2       & (n-3)q    &         &         &   \\
   & -2      & 3         & \ddots  &         &   \\
   &         & -3        & \ddots  &   2q    &   \\
   &         &           & \ddots  &  n-2    &  q \\
   &         &           &         & -(n-2)  &  n-1   
\end{bmatrix}. \label{eqn:D4}
\end{align}
The matrix in (\ref{eqn:D3}) is obtained from the matrix in (\ref{eqn:determinant-form}) by subtracting the $(i+1)$th row from the $i$th row for each $i\in [n-1]$ and expanding the determinant along the last column. The matrix in (\ref{eqn:D4}) is obtained from the matrix in (\ref{eqn:D3}) by multiplying the $j$th column by $j$ for each $j\in [n-1]$.
Note that the transpose of the matrix in (\ref{eqn:D4}) coincides with the tridiagonal matrix in (\ref{eqn:Chu-matrix}), with determinant $f_{n-2}(x,y;u,v)$, specialized at $x=1$, $y=1$, $u=-1$ and $v=q$. Thus, by Theorem \ref{thm:Chu}, we have 
\begin{align*}
\sum_{\mathbf{x}\in \F_{n}} q^{\exc(\xx)} &= n\cdot f_{n-2}(1,1;-1,q) \\
&=n\cdot \prod_{k=0}^{n-2} \left(1+\frac{n-2}{2}+\frac{n-2-2k}{2}\sqrt{1-4q} \right) \\
&=n\cdot \prod_{k=1}^{n-1} \left(\frac{n}{2}+\frac{n-2k}{2}\sqrt{1-4q} \right) \\
&=\begin{cases}
{\displaystyle n\cdot\prod_{k=1}^{\frac{n-1}{2}} \left(\frac{n}{2}+\frac{n-2k}{2}\sqrt{1-4q} \right)\left(\frac{n}{2}-\frac{n-2k}{2}\sqrt{1-4q} \right)} &\mbox{if $n$ is odd;} \\[3ex]
{\displaystyle \frac{n^2}{2}\cdot\prod_{k=1}^{\frac{n-2}{2}} \left(\frac{n}{2}+\frac{n-2k}{2}\sqrt{1-4q} \right)\left(\frac{n}{2}-\frac{n-2k}{2}\sqrt{1-4q} \right)} &\mbox{if $n$ is even.} 
\end{cases} \\
&=\begin{cases}
{\displaystyle n\cdot\prod_{k=1}^{\frac{n-1}{2}} \big(k(n-k)+(n-2k)^2q\big)} &\mbox{if $n$ is odd;} \\[3ex]
{\displaystyle \frac{n^2}{2}\cdot\prod_{k=1}^{\frac{n-2}{2}} \big(k(n-k)+(n-2k)^2q\big)} &\mbox{if $n$ is even.} 
\end{cases}
\end{align*}

By Lemma \ref{lem:gamma-nj}, we obtain the relation
\begin{equation*}
\sum_{\mathbf{x}\in \F_{n}} q^{\exc(\xx)}=\sum_{j=0}^{\lfloor (n-1)/2\rfloor} \gamma_{n,j} q^j.
\end{equation*}
It follows that $\gamma_{n,j}$ is the number of words $\xx\in\F_n$ with $\exc(\xx)=j$. Thus, the proof of Theorem \ref{thm:gamma-positive-for-words} is completed. \qed

\smallskip
By Theorems \ref{thm:type-B-PF-NC}, \ref{thm:bijection-type-B-PF-Word} and \ref{thm:gamma-positive-for-words},  the proof of Theorem \ref{thm:gamma-positive} is completed.

\section{Concluding Remarks}
In this paper we establish a connection of parking functions of types $A$ and $B$ with the maximal chains in the lattice of noncrossing partitions of $W$ by the polynomials $M(\mathfrak{S}_{n+1},q)$ and $M(B_n,q)$.
Some potential follow-up questions may arise. There are interesting bijections between the regions of Shi arrangement and parking functions \cite{AL}, and between the regions and the maximal chains in the lattice of noncrossing partitions \cite{Stanley}.
There are other objects counted by the number $(n+1)^{n-1}$, such as Cayley trees with $n+1$ vertices, the critical states of the abelian sandpile model on the complete graph $K_{n+1}$, the regions in the Ish arrangement in $\mathbb{R}^n$ \cite{ArmstrongRhodes}, etc. 
It would be interesting to investigate the interrelationship among these combinatorial structures from the perspective of the polynomial $M(\mathfrak{S}_{n+1},q)$. 

In a study of interval partitions, Biane and Josuat-Verg\'es \cite{BJ2} generalized the polynomial $M(\mathfrak{S}_{n+1},q)$ to a multivariate generating function
\begin{equation*} 
\sum_{\ww} \wt(\ww)=\prod_{k=1}^{n-1} \big(n-k+1+kX_k\big),
\end{equation*}
summed over all maximal chains $\ww$ in $\NC(\mathfrak{S}_{n+1},c)$, where the weight $\wt$ is a monomial in the $X_k$. As quoted from \cite{BJ2}, they said ``it would be interesting to investigate the existence of similar formulas for other types.''

\section*{Acknowledgements.}
We would like to thank an anonymous referee for reading the manuscript carefully and providing helpful suggestions. 
This research was supported in part by
National Science and Technology Council, Taiwan, through grants 110-2115-M-003-011-MY3 (S.-P. Eu),  111-2115-M-153-004-MY2 (T.-S. Fu), and postdoctoral fellowship 111-2811-M-A49-537-MY2 (Y.-J. Cheng).


\end{document}